\theoremstyle{plain}
\newtheorem{theorem}{Theorem}[section]
\newtheorem{proposition}[theorem]{Proposition}
\newtheorem{lemma}[theorem]{Lemma}
\newtheorem{corollary}[theorem]{Corollary}
\theoremstyle{definition} \theoremstyle{remark}
\newtheorem{remark}[theorem]{Remark}
\newtheorem{definition}[theorem]{Definition}
\newtheorem{question}{Question}
    \newtheoremstyle{TheoremNum}
        {\topsep}{\topsep}              %%% space between body and thm
        {\itshape}                      %%% Thm body font
        {}                              %%% Indent amount (empty = no indent)
        {\bfseries}                     %%% Thm head font
        {.}                             %%% Punctuation after thm head
        { }                             %%% Space after thm head
        {\thmname{#1}\thmnote{ \bfseries #3}}%%% Thm head spec
    \theoremstyle{TheoremNum}
    \newtheorem{thmn}{Theorem}
		\newtheorem{coron}{Corollary}
\begin{document}

\large

\begin{center}
\textbf{\large Partially hyperbolic geodesic flows}

\vspace{10 mm}  

{\large by Fernando Carneiro and Enrique Pujals}

\end{center}

\begin{abstract}
   We construct a category of examples of partially hyperbolic geodesic flows which are not Anosov, deforming the metric of a compact locally symmetric space of nonconstant negative curvature. Candidates for such an example as the product metric and locally symmetric spaces of nonpositive curvature with rank bigger than one are not partially hyperbolic. We prove that if a metric of nonpositive curvature has a partially hyperbolic geodesic flow, then its rank is one. Other obstructions to partial hyperbolicity of a geodesic flow are also analyzed.
\end{abstract}

\tableofcontents

\section{Introduction}
\label{s.intro}

The theory of  hyperbolic dynamics has been one of the extremely successful stories in dynamical systems. Originated by studying dynamical properties of geodesic flows on
manifolds with negative curvature \cite{An} and geometrical properties of homoclinic points
\cite{Sm},  hyperbolicity  is the cornerstone of uniform and robust chaotic dynamics; it  characterizes the structural stable systems; it provides the structure underlying the presence of homoclinic points; a large category of rich dynamics are hyperbolic (geodesic flows in negative curvature, billiards with negative curvature, linear automorphisms, some mechanical systems, etc.); the hyperbolic theory has been fruitful in developing a geometrical approach to dynamical systems; and, under the assumption of hyperbolicity one obtains a satisfactory (complete) description of the dynamics of the  system from a topological and statistical point of view. Moreover, hyperbolicity has provided paradigms or models of behavior that can be expected to be obtained in specific problems.

Nevertheless,  hyperbolicity was soon realized to be a property less universal than it was initially thought: it was shown that there are open sets in the space of dynamics which
are nonhyperbolic. To overcome these difficulties, the theory moved in different directions; one being  to develop weaker or relaxed forms of hyperbolicity, hoping to include a larger
class of dynamics.

There is an easy  way to relax hyperbolicity, called partial hyperbolicity. Let $f : M \to M$ be a diffeomorphism from a smooth manifold $M$ to itself. We say that $f$ is partially hyperbolic if the tangent bundle of $M$ split into $Df$-invariant subbundles $TM=E^s\oplus E^c\oplus E^u,$  such that the behavior of vectors in $E^s, E^u$ are contracted and expanded respectively by $Df$, but vectors in $E^c$ may be neutral for the action of the tangent map, i.e., $|d_xf^n v|$ contracts exponentially fast if $v \in E^s$, $|d_xf^n v|$ expands exponentially fast if $v \in E^u$ and $|d_xf^n v|$ neither contracts nor expands as fast as for the other two invariant subbundles if $v \in E^c$. The Anosov condition is equivalent to $E^c(x) = \{0 \}$ for all $x \in M$. This notion arose in a natural way   in the context of time one maps of Anosov flows, frame flows or group extensions. See \cite{BP}, \cite{Sh}, \cite{M}, \cite{BD}, \cite{BV} for examples of these systems  and \cite{HP}, \cite{PS} for an overview.

However, and differently from hyperbolic ones, partially hyperbolic non-Anosov systems were unknown in the context of geodesic flows induced by Riemannian metrics. As far as we know, the way to produce partially hyperbolic systems in discrete dynamics are the following: time-one maps of Anosov flows, skew-products over hyperbolic dynamics, products and derived of Anosov deformations (DA). The two last approaches can be adapted to flows.

Our work shows that one is able to deform a specific metric that provides an Anosov geodesic flow to get a partially hyperbolic geodesic flow. Theorem A is inspired by the Ma\~n\'e's DA construction of a partially hyperbolic diffeomorphism \cite{M}.

We prove the following theorems:

\begin{thmn}[A]
\label{t.a}
There are Riemannian metrics such that their geodesic flows are partially hyperbolic but not Anosov. Moreover, some of them are transitive.
\end{thmn}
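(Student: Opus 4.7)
The plan is to start from a compact locally symmetric space $(M_0,g_0)$ of negative curvature that is \emph{not} real hyperbolic (e.g.\ a compact quotient of complex hyperbolic space). For such a manifold the geodesic flow $\phi_t^{g_0}$ on the unit tangent bundle $SM_0$ is Anosov, and it carries additional structure: the stable and unstable bundles refine as
\[
E^s \;=\; E^{ss}\oplus E^{ws},\qquad E^u \;=\; E^{wu}\oplus E^{uu},
\]
where the ``strong'' summands correspond to planes of sectional curvature $-4$ and the ``weak'' summands to planes of curvature $-1$. Thus $(\phi_t^{g_0})$ is already equipped with a natural dominated splitting of the hyperbolic bundles, which is exactly the setting in which a Ma\~n\'e-style DA surgery can be attempted: one tries to break the hyperbolicity on the weak unstable summand $E^{wu}$ along a selected closed orbit while preserving the strong expansion on $E^{uu}$ and the contraction on $E^s$, producing a center bundle instead of an Anosov splitting.

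The concrete steps I would carry out are the following. First, select a closed geodesic $\gamma$ of $g_0$ and build a one-parameter family $g_\varepsilon$ of Riemannian metrics that coincide with $g_0$ outside an arbitrarily thin tubular neighborhood $U$ of $\gamma$, and whose curvature tensor inside $U$ is modified so as to decrease the expansion that the Jacobi equation produces along $E^{wu}$ (geometrically, flattening the $-1$ curvature planes along $\gamma$). Along $\gamma$ itself the perturbation is chosen large enough to make a weak-unstable Jacobi field stop growing, while outside $U$ nothing changes; so for suitable $\varepsilon$ the closed orbit of $\gamma$ is no longer hyperbolic, which will prevent the new flow from being Anosov. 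Second, using the Riccati equation along geodesics and a cone-field argument, I would show that for small $\varepsilon$ the splitting
\[
T(SM) \;=\; E^{ss}\oplus E^{c}\oplus E^{uu}
\]
persists for $\phi_t^{g_\varepsilon}$, with $E^{ss}$ uniformly contracted, $E^{uu}$ uniformly expanded, and $E^c$ (containing the flow direction together with the perturbed weak summands) dominated by the strong bundles. The key local input is that, by domination, the strong cones around $E^{ss}$ and $E^{uu}$ in $g_0$ remain strictly invariant under a $C^2$-small perturbation of the curvature tensor, because they are preserved by the Riccati flow with an opening controlled by the gap between the strong and weak Lyapunov rates.

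For the second assertion (existence of transitive examples), I would arrange the deformation so that the strong stable and strong unstable foliations of $g_0$ survive the perturbation (this follows from structural stability of the strong bundles inside the dominated splitting) and remain minimal, which holds automatically since for the symmetric metric $g_0$ these foliations arise from a transitive group action and are already minimal; minimality of $W^{ss}$ (or $W^{uu}$) then forces transitivity of $\phi_t^{g_\varepsilon}$. Alternatively, one can observe that a small $C^\infty$ perturbation of a transitive Anosov flow admitting a dominated decomposition inherits transitivity through the accessibility of the strong foliations.

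The main obstacle is step two: maintaining the dominated splitting $E^{ss}\oplus E^c\oplus E^{uu}$ once the perturbation is large enough along $\gamma$ to destroy hyperbolicity in the weak unstable direction. In Ma\~n\'e's diffeomorphism construction this amounts to checking that the bifurcation can be done ``vertically'' inside a single summand; here the analogous statement must be proved at the level of the Jacobi/Riccati equation, controlling simultaneously the geometry of the perturbation and the invariance and uniform transversality of the strong cone fields along all orbits that enter $U$. Ensuring that the deformation is localized in a small enough neighborhood of $\gamma$, and choosing the size of the perturbation in terms of the symmetric gap between the Lyapunov rates $-2$ and $-1$, is what makes the construction work; this is the point where the nonconstant curvature hypothesis (giving a nontrivial gap) is essential and where candidates like the product metric or higher-rank symmetric spaces fail.
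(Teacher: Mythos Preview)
Your overall architecture matches the paper's: start from a compact locally symmetric space of nonconstant negative curvature, exploit the refined splitting $E^{ss}\oplus E^{ws}\oplus\langle X\rangle\oplus E^{wu}\oplus E^{uu}$, and deform the metric in a tube around a closed geodesic so that the weak-curvature planes along $\gamma$ become flat while the strong ones persist. However, two points need repair.

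First, the cone argument. You write that the strong cones ``remain strictly invariant under a $C^2$-small perturbation of the curvature tensor'', but then correctly observe that the deformation must be large enough to kill hyperbolicity in the weak direction. These cannot both be right: the perturbation is $C^1$-small but unavoidably $C^2$-large (the curvature along $\gamma$ moves by a fixed amount, e.g.\ from $-1/4$ to $0$ in the paper's normalization), so structural stability of the strong cones is not automatic. The paper resolves this by splitting geodesics entering the tube into two regimes. For geodesics nearly parallel to $\gamma$, an explicit computation of the angle-cone variation shows that the specific form of the perturbation (which hits only the $\partial^2_{kk}\alpha$ entries corresponding to weak directions) keeps the strong unstable cone strictly invariant despite the $C^2$-large change. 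For geodesics transversal to $\gamma$, the cone may open, but such geodesics cross the tube in time $O(\epsilon/\theta)$ and then spend a uniformly bounded-below time in the unperturbed region where the cone closes at a definite rate; shrinking $\epsilon$ makes the net effect a contraction. Your proposal names the obstacle but does not supply this mechanism, and the ``$C^2$-small'' remark suggests the wrong intuition.

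Second, the transitivity argument is genuinely different from the paper's and, as stated, has a gap. You invoke persistence of minimality of the strong foliations or accessibility under ``a small $C^\infty$ perturbation'', but again the perturbation is not $C^2$-small, so neither claim comes for free. The paper instead takes the one-parameter family $g_s=g_0+s(g^*-g_0)$ and uses that the set of metrics without conjugate points is $C^2$-closed: at the first parameter $s_0$ where the Anosov property fails, $g_{s_0}$ still has no conjugate points, is partially hyperbolic (by openness), and is non-Anosov; Eberlein's theorem then gives transitivity directly from the absence of conjugate points on a manifold carrying an Anosov metric. If you want to pursue your route, you would need an independent argument that minimality of $W^{uu}$ survives this particular $C^2$-large deformation.
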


More precisely, we prove:

\begin{thmn}[B]
\label{t.b}
Let $(M,g)$ be a compact locally symmetric Riemannian manifold of nonconstant negative sectional curvature. There is an $C^2$-open set of $C^{\infty}$ Riemannian metrics on $M$ such that their geodesic flows are partially hyperbolic but not Anosov. Those metrics which are on the $C^2$-boundary of the Anosov Riemannian metrics are transitive.
\end{thmn}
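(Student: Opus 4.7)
The plan is to exploit the finer Lyapunov structure available on a compact locally symmetric space of nonconstant negative curvature and to perform a Ma\~n\'e-style DA deformation of the metric. On $(M,g_0)$ the geodesic flow $\phi_t^{g_0}$ on $SM$ is Anosov and, via the root-space decomposition of the isometry group, admits a continuous invariant splitting
$$TSM = E^{ss} \oplus E^{ws} \oplus \mathbb{R} X \oplus E^{wu} \oplus E^{uu},$$
where $X$ is the geodesic generator and the non-line summands have strictly ordered Lyapunov exponents (the sectional curvatures take at least two distinct values, e.g.\ $-1$ and $-4$ in the complex hyperbolic case). Grouping the middle three terms yields a dominated splitting $TSM = E^{ss} \oplus E^c_0 \oplus E^{uu}$ with $E^c_0 := E^{ws} \oplus \mathbb{R}X \oplus E^{wu}$, and by the standard cone-field persistence of dominated splittings there is a $C^2$-neighborhood $\mathcal{U}$ of $g_0$ on which the flow $\phi_t^g$ inherits a continuous dominated splitting $E^{ss}_g \oplus E^c_g \oplus E^{uu}_g$, with $E^{ss}_g$ uniformly contracting and $E^{uu}_g$ uniformly expanding. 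Every $g \in \mathcal{U}$ is then a candidate for partial hyperbolicity; to obtain a non-Anosov example it remains only to arrange that $E^c_g$ fails to be uniformly hyperbolic.

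To force this failure I would fix a closed geodesic $\gamma$ of $(M,g_0)$ and construct a one-parameter $C^\infty$ family $\{g_\lambda\}_{\lambda \in [0,\lambda_0]} \subset \mathcal{U}$ of metrics supported in a small ball meeting $\gamma$ at one point, arranged so that $\gamma$ remains a closed geodesic of $g_\lambda$ and the linearized Poincar\'e map along $\gamma$ is perturbed only in its $E^{ws}$-block. A bump perturbation of the metric in the transverse $E^{ws}$-directions modifies the matrix Riccati equation for the stable Lagrangian along $\gamma$ in a block that is decoupled from the $E^{ss}$-block up to an error controlled by the spectral gap between the weak and strong exponents. For small $\lambda$ the flow remains Anosov; at some threshold $\lambda_*$ the Floquet exponent of the return map along $E^{ws}$ crosses zero, and for $\lambda > \lambda_*$ the orbit $\gamma$ has an unstable direction lying inside $E^c_{g_\lambda}$, so $\phi_t^{g_\lambda}$ is partially hyperbolic by the preceding step but not Anosov.

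The property of having a periodic orbit whose Lyapunov exponents straddle zero inside $E^c_g$ is $C^2$-open, so a $C^2$-open neighborhood of $g_{\lambda_0}$ provides the partially hyperbolic non-Anosov open set of the statement. For the transitivity clause, any $g_*$ on the $C^2$-boundary between the Anosov and non-Anosov regions is a limit of Anosov metrics $g_n$, each transitive (being Anosov and preserving the fully supported Liouville measure). The chain-recurrent set of $\phi_t^{g_*}$ is therefore all of $SM$; combined with partial hyperbolicity, conservativity, and a shadowing/closing argument inside the uniform $E^{uu}_{g_*}$ and $E^{ss}_{g_*}$ foliations, chain transitivity should upgrade to genuine transitivity.

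The principal obstacle lies in the deformation step: one must verify that a localized metric perturbation can flip the sign of the weak-stable Floquet exponent along $\gamma$ without spoiling the $E^{ss}$ contraction or $E^{uu}$ expansion elsewhere. This forces a careful perturbative analysis of the Jacobi and Riccati equations, relying in an essential way on the strict gap between the weak and strong exponents that is present precisely in the nonconstant locally symmetric case (and absent in the real hyperbolic case, matching the hypothesis of the theorem). A secondary subtlety is the boundary transitivity argument, where one must rule out a proper attractor forming in the $C^2$ limit; this is expected to follow from conservativity but may require an explicit appeal to the accessibility or stable ergodicity theory for partially hyperbolic conservative systems.
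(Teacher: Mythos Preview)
Your outline captures the correct strategy---exploit the strong/weak splitting of the locally symmetric Anosov flow and deform the metric to kill the weak hyperbolicity along a closed geodesic---but the framing via an abstract $C^2$-neighborhood $\mathcal{U}$ hides the real difficulty rather than resolving it. The Anosov property is itself $C^2$-open for metrics, so any non-Anosov deformation $g_{\lambda_*}$ must lie $C^2$-far from $g_0$; the paper states this explicitly (the examples are ``$C^1$-close and $C^2$-far from $g$''). Consequently the inclusion $\{g_\lambda\} \subset \mathcal{U}$ cannot follow from ``standard cone-field persistence,'' which only guarantees a small $C^2$-neighborhood. This is not a side issue but the entire content of the theorem: the paper constructs an explicit deformation $g^*_{00} = g_{00} + \alpha$ in a tubular neighborhood of the \emph{entire} closed geodesic $\gamma$ (not a single ball), with $\alpha$ a carefully designed bump whose only $C^2$-large contribution is $\partial^2_{kk}\alpha$ in the weak-curvature directions, and then verifies by direct computation of the angle-cone variation that the strong cone fields are still properly invariant under this large perturbation. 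Your last paragraph correctly names this as the principal obstacle, but the body of the proposal treats it as already handled by the choice of $\mathcal{U}$, which is circular.

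The transitivity argument is also substantively different. The paper does not use chain recurrence or shadowing; instead it invokes a theorem of Eberlein that any metric without conjugate points on a manifold admitting an Anosov metric has transitive geodesic flow. Since the set of metrics without conjugate points is $C^2$-closed and the deformation path $g_s = g + s(g^*-g)$ starts Anosov, some $g_{s_0}$ on that path is partially hyperbolic, non-Anosov, and still without conjugate points, hence transitive by Eberlein. Your proposed route through shadowing in the strong foliations would need additional ingredients (accessibility, or a center-bunching hypothesis) that are neither available nor verified here.
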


\begin{remark}
These metrics are $C^1$-close and $C^2$-far from $g$, and some are $C^2$-far from Anosov.
\end{remark}

\begin{remark}
The theorem works for the compact K\"ahler manifold of constant holomorphic curvature $-1$ \cite{G}, and also for the quaternionic K\"ahler locally symmetric spaces of negative curvature. Both of these locally symmetric spaces are even-dimensional. In these cases, for a fixed $v \in TM$, there are subspaces $A$ and $B$ of $v^{\bot}$ such that if $w \in A$ then $K(v,w) = -1$, if $w \in B$ then $K(v,w) = -\frac{1}{4}$ and $v^{\bot} = A \oplus B$. This property implies that the geodesic flow of these manifolds is Anosov with many invariant subbundles (see section \ref{s.symm}).

\end{remark}

\begin{remark}
Of course, if we multiply the metric by a constant, the Anosov or the partially hyperbolic splitting remain the same, but the curvature does not. So, we consider the maximal sectional curvature of the locally symmetric space to be $-1$, which is true after multiplication of the metric by a constant.
\end{remark}

\begin{remark}
A classical Ma\~n\'e theorem \cite{M3} says that if, for a geodesic flow of a Riemannian manifold there is an invariant Lagrangian subbundle, then this Riemannian manifold does not have conjugate points. The existence of a partially hyperbolic non-Anosov geodesic flow implies that this theorem does not extend to invariant isotropic subbundles. Corollary C.1  states that some partially hyperbolic non-Anosov do not have conjugate points. 
\end{remark}

Let $X: N \to TN$ be a vector field on $N$ without singularities, i.e., for any $p \in N$, $X(p) \neq 0$, let $\phi_t: N \to N$ be its flow, let $p \in N$ be a point such that $\phi_T(p) = p$ for some positive real number $T$, and let $\lambda_i$, $i=1,\ldots,dim(N)$ be the eigenvalues of $d_p\phi_T : T_pN \to T_pN$. Let $\lambda_1 = 1$ be the eigenvalue associated to $X(p)$. We say $p$ is hyperbolic if there is no eigenvalue in the unit circle, besides $\lambda_1$, i.e., $\lambda_i \neq 1$ for $i=2,\ldots,dim(N)$. We say $p$ is quasi-elliptic if there are eigenvalues in the unit circle, besides $\lambda_1$. We say $p$ is nondegenerate if there is no eigenvalue equal to one besides $\lambda_1$. The next two corollaries are given by the persistence of quasi-elliptic nondegenerate periodic points.

\begin{coron}[C.1]	
There is a $C^2$-open set $\mathcal{U}$ of metrics in the set of metrics of $M$ such that for $g \in \mathcal{U}$, the geodesic flow of $g$ is partially hyperbolic but not Anosov, for $(M,g)$ as in the previous theorem. There is also an open set $\mathcal{U}'$ of metrics such that for $g \in \mathcal{U}'$, the geodesic flow of $g$ is partially hyperbolic non-Anosov and with conjugate points.
\end{coron}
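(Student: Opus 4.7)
Both parts rely on the $C^1$-robustness of two features of the geodesic vector field on $SM$---partial hyperbolicity (via invariant cone fields) and the existence of a quasi-elliptic nondegenerate closed orbit (via the implicit function theorem applied to the Poincar\'e return map)---each translating into $C^2$-robustness in the space of metrics. For $\mathcal{U}$, I take a metric $g_1$ produced by Theorem~B; its geodesic flow is partially hyperbolic, and its failure to be Anosov is witnessed by a closed geodesic $\gamma$ whose linearized Poincar\'e return map has, besides the flow eigenvalue $1$, a complex-conjugate pair of eigenvalues $e^{\pm i\theta}$ on the unit circle distinct from $\pm 1$. This configuration is open among symplectic matrices, so both features persist on a common $C^2$-neighborhood of $g_1$, which one takes as $\mathcal{U}$.

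For $\mathcal{U}'$, the plan is to exhibit a single metric $g_2 \in \mathcal{U}$ whose geodesic flow admits a conjugate point and then invoke the $C^2$-openness of the conjugate-point condition (via continuous dependence of the Jacobi equation on the metric). I would deform $g_1$ continuously inside $\mathcal{U}$, tracking the rotation angle $\theta(g)$ in the two-dimensional center symplectic plane $E^c|_\gamma$ of the Poincar\'e map of the persistent quasi-elliptic orbit $\gamma$. In that plane the Jacobi evolution is a linear symplectic rotation at rate $\omega = \theta/L$, where $L$ is the length of $\gamma$, and when the $E^c$-part of the vertical Lagrangian $L_0$ of Jacobi fields vanishing at a basepoint is non-trivial, this rotation returns it to itself at time $t_0 = \pi/\omega$, yielding $L_{t_0} \cap L_0 \neq 0$, i.e.\ a conjugate point along an iterate of $\gamma$. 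The transversality $L_0 \cap E^c \neq 0$ along $\gamma$ is itself $C^2$-open in $\mathcal{U}$ and can be arranged in the construction of $g_1$ by exploiting the several curvature eigenspaces of the nonconstant negative sectional curvature (these are precisely the subspaces $A$ and $B$ of $v^{\bot}$ highlighted in the second remark).

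The main obstacle is to propagate the deformation to $g_2$ without leaving $\mathcal{U}$: the eigenvalues $e^{\pm i\theta(g)}$ must neither collide at $\pm 1$ (losing nondegeneracy) nor leave the unit circle (recovering Anosov). Both are codimension-one phenomena, avoided by a generic one-parameter path in $\mathcal{U}$, so one can maintain all the structure needed until a conjugate point is produced; a $C^2$-neighborhood of the resulting $g_2$ then gives the desired $\mathcal{U}'$.
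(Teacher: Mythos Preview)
Your argument for $\mathcal{U}$ matches the paper's intent: $C^2$-openness of partial hyperbolicity together with persistence of a quasi-elliptic nondegenerate closed orbit is exactly what the paper flags in the sentence preceding the statement of C.1. One caveat: the explicit $g^*$ built in Section~\ref{s.nonanosov} has \emph{parabolic} center directions along $\gamma$---the center Jacobi fields there are $E_k(t)$ and $tE_k(t)$, so the relevant Poincar\'e block is a Jordan block with eigenvalue $1$---and one must first perturb slightly (pushing the $B$-curvature a bit positive) to obtain a genuinely quasi-elliptic orbit before invoking robustness. This step is implicit in the paper as well.

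For $\mathcal{U}'$ the paper's route is much shorter and bypasses your explicit construction entirely: it invokes Ruggiero's theorem \cite{R2} that the $C^2$-interior of the set of metrics without conjugate points coincides with the Anosov metrics. Since any metric in $\mathcal{U}$ is non-Anosov, every $C^2$-neighborhood of it contains a metric with conjugate points; intersecting with $\mathcal{U}$ and using that having conjugate points is a $C^2$-open condition gives $\mathcal{U}'$ immediately. Your direct approach, by contrast, has a real gap: the assertion that ``the Jacobi evolution is a linear symplectic rotation at rate $\omega=\theta/L$'' is not justified---only the time-$L$ Poincar\'e map on the center plane is (conjugate to) a rotation by $\theta$, while the flow $d\phi_t$ for $0<t<L$ is governed by non-constant curvature along $\gamma$ and is not a uniform rotation, so the alleged conjugate time $t_0=\pi/\omega$ has no meaning. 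One could try to repair this at the level of iterates of the return map (arranging $\theta\in\pi\mathbb{Q}$ so that some power acts as $\pm I$ on the center plane, and separately verifying $V\cap E^c\neq 0$), but Ruggiero's theorem makes all of that unnecessary.
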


\begin{coron}[C.2]
There is a $C^2$-open set $\mathcal{V}$ of Hamiltonians in the set of Hamiltonians of $(TM,\omega_{TM})$, near geodesic Hamiltonians, such that for $h \in \mathcal{U}$, the Hamiltonian flow of $h$ is partially hyperbolic but not Anosov.
\end{coron}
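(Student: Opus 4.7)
The plan is to reduce Corollary C.2 to the persistence arguments already used for Corollary C.1, but now in the larger space of symplectic perturbations rather than metric perturbations. I start from a metric $g$ provided by the open set $\mathcal{U}$ of Corollary C.1, so that the geodesic flow on the unit tangent bundle $T^1M = h_g^{-1}(1/2)$ is partially hyperbolic, non-Anosov, and the failure of Anosov is witnessed by a quasi-elliptic nondegenerate periodic orbit $\gamma$. Here $h_g(q,p) = \tfrac{1}{2}g^{ij}(q)p_ip_j$ is the geodesic Hamiltonian on $(TM,\omega_{TM})$ (via the Legendre transform from $T^*M$), and the geodesic flow is precisely its Hamiltonian flow.

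Now consider a Hamiltonian $h$ which is $C^2$-close to $h_g$. The induced Hamiltonian vector field $X_h$ is then $C^1$-close to $X_{h_g}$, and the energy hypersurface $\Sigma_h := h^{-1}(1/2)$ is a compact hypersurface $C^1$-close to $T^1M$. Projection along $X_{h_g}$ in a tubular neighborhood gives a diffeomorphism $\Sigma_h \to T^1M$ that conjugates the Hamiltonian flow of $h$ to a $C^1$-small perturbation of the geodesic flow of $g$ on $T^1M$. In this way, the verification of the partially hyperbolic non-Anosov property on $\Sigma_h$ is converted into a question about $C^1$-small perturbations of the geodesic flow on a fixed compact manifold.

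Two open properties of the resulting flow now deliver the conclusion. First, the existence of a dominated splitting with uniformly contracting and expanding extremes is a $C^1$-open condition for nonsingular flows, so the partial hyperbolicity of the geodesic flow of $g$ persists for all $h$ sufficiently $C^2$-close to $h_g$. Second, the periodic orbit $\gamma$ is nondegenerate, so by the implicit function theorem on a Poincar\'e section it persists as a smooth function of $h$, and the eigenvalues of its linearized return map vary continuously. Because the Hamiltonian structure forces eigenvalues to appear in reciprocal pairs $\{\lambda, 1/\lambda\}$, the simple non-real unit-circle eigenvalues of the return map of $\gamma$ cannot leave the unit circle under a small perturbation without a Krein collision, so the continuation $\gamma_h$ remains quasi-elliptic. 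Its nontrivial neutral eigenvalues force the center subbundle of the dominated splitting to be nonzero, and hence the flow of $h$ is not Anosov.

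The main technical point is the first step: that a $C^2$-perturbation of $h_g$ really yields a flow on a nearby compact energy level which is $C^1$-conjugate to a small perturbation of the geodesic flow on $T^1M$, with uniform estimates in $h$. Once this identification is in place, everything reduces to standard $C^1$-persistence statements (openness of dominated splittings for flows, and persistence of quasi-elliptic nondegenerate periodic orbits in the Hamiltonian setting via Krein theory), and the open set $\mathcal{V}$ is defined as the $C^2$-neighborhood of $h_g$ on which both persistence statements simultaneously apply.
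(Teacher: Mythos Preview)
Your proposal is correct and follows the same mechanism the paper intends: partial hyperbolicity is $C^1$-open for the flow, and the non-Anosov behavior is pinned down by the persistence of a quasi-elliptic nondegenerate closed orbit, exactly the principle the paper announces before stating Corollaries C.1 and C.2. The paper's own proof of C.2 is a single sentence (``for the same reasons of the previous corollary''), so your write-up is substantially more detailed---in particular, you make explicit the passage from a $C^2$-neighborhood of $h_g$ to a $C^1$-neighborhood of the geodesic flow on a fixed compact level set, and you invoke Krein stability to justify that the unit-circle eigenvalues survive symplectic perturbation---but the underlying strategy is the same.
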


It is easy to construct  partially hyperbolic Hamiltonians by suspensions; but they are not close to geodesic flows. 

We also show that product metrics of Anosov geodesic flows are not examples with the partially hyperbolic property:

\begin{thmn}[D]
\label{t.d}
If $(M_1,g^1)$ and $(M_2,g^2)$ are Riemannian manifolds such that the geodesic flow of at least one of them is Anosov, then the geodesic flow of $(M_1 \times M_2,g^1+g^2)$ is not partially hyperbolic.
\end{thmn}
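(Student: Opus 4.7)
The plan is to exploit the product decomposition of the Jacobi equation and contradict continuity of the partially hyperbolic splitting. Assume without loss of generality that $(M_1,g^1)$ has Anosov geodesic flow, and suppose for contradiction that the geodesic flow $\phi_t$ of $(M_1\times M_2,g^1+g^2)$ on $SM$ admits a partially hyperbolic splitting $TSM = E^s\oplus E^c\oplus E^u$. A geodesic of the product splits as $\gamma(t)=(\gamma_1(t),\gamma_2(t))$ with $\gamma_i$ a geodesic of $M_i$, and any Jacobi field splits as $J=(J_1,J_2)$ with each $J_i$ solving the Jacobi equation of $M_i$ along $\gamma_i$. Consequently at each $x=(v_1,v_2)\in SM$ the linearised flow $d\phi_t$ preserves the $M_i$-parts $V_i(x)\subset T_xSM$.

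At a horizontal point $x_0=(v_1,0)$ with $v_1\in SM_1$, the second component $\gamma_2$ is constant, so the $M_2$-Jacobi equation degenerates to $J_2''=0$ and every $M_2$-Jacobi field $J_2(t)=A+tB$ grows only linearly. The whole $2n_2$-dimensional $V_2(x_0)$ therefore lies in $E^c(x_0)$, while on the complement $V_1(x_0)$ the linearised flow is precisely the Anosov geodesic flow of $M_1$; hence $E^s(x_0)=E^s_{M_1}(v_1)$ and $E^u(x_0)=E^u_{M_1}(v_1)$ with $\dim E^s=\dim E^u=n_1-1$ and $\dim E^c=2n_2+1$. Since $SM$ is connected and the partially hyperbolic splitting is continuous, these dimensions persist throughout $SM$.

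At a vertical point $x_1=(0,v_2)$, $v_2\in SM_2$, the symmetric computation yields $V_1(x_1)\subset E^c(x_1)$, so $E^s(x_1)\subset V_2(x_1)$. Because the stable Jacobi fields along any geodesic of any Riemannian manifold span an isotropic subspace of the normal symplectic space of dimension at most $n_2-1$, we get $n_1-1=\dim E^s\le n_2-1$, which already contradicts the hypothesis when $n_1>n_2$. When $n_1\le n_2$, $E^s$ evaluated along vertical orbits endows $M_2$ with a uniformly contracting $d\phi_t^{M_2}$-invariant subbundle $E^s_{M_2}\subset T(SM_2)$ of dimension $n_1-1$, and symmetrically an unstable subbundle $E^u_{M_2}$. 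At a mixed orbit $(v_1,v_2)$ with both $v_i\ne 0$, the one-dimensional direction in $T_xSM$ complementary to $V_1\oplus V_2$ has polynomial growth and so lies in $E^c$; any $\xi\in E^s$ therefore splits as $\xi_1+\xi_2$ with $\xi_i\in V_i$, and exponential decay of $\|d\phi_t\xi\|$ forces each $\xi_i$ into $E^s_{M_i}$. Thus $E^s\subset E^s_{M_1}\oplus E^s_{M_2}$; since the two summands are transverse and contracted at different rates $|v_1|\lambda_1$ and $|v_2|\lambda_2$, the only $d\phi_t$-invariant $(n_1-1)$-dimensional subspaces of the sum are $E^s_{M_1}$ and $E^s_{M_2}$ themselves, and domination picks whichever contracts strictly faster. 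But the sign of $|v_1|\lambda_1-|v_2|\lambda_2$ reverses between horizontal and vertical orbits, and on the connected set $SM$ it must vanish somewhere; near that locus $E^s$ would have to jump between the transverse subspaces $V_1$ and $V_2$, contradicting continuity.

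The horizontal analysis is essentially rigid and the dimension count alone gives the theorem when $n_1>n_2$; the main obstacle is the case $n_1\le n_2$, which really needs the swap argument of the third paragraph. Making it fully rigorous requires, first, verifying that $M_2$ inherits uniform partial hyperbolicity of the correct dimension from the product, and second, checking that the only $d\phi_t$-invariant uniformly contracted $(n_1-1)$-dimensional subspaces of $E^s_{M_1}\oplus E^s_{M_2}$ along a mixed orbit are the two factor subbundles — so that the intermediate value theorem applied to the rate difference produces the required discontinuity of $E^s$.
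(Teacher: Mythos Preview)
Your overall strategy---decouple the Jacobi equation into factor components, analyse the horizontal and vertical unit vectors separately, and then appeal to continuity of the splitting along a path joining them---is precisely the paper's approach (Section~\ref{s.whynot}, culminating in Theorem~\ref{t.prod}). Two points, however, are genuine gaps rather than details to be filled in.

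First, the assertion that at a horizontal point $x_0=(v_1,0)$ one must have $E^s(x_0)=E^s_{M_1}(v_1)$, and hence $\dim E^s=n_1-1$, does not follow. Partial hyperbolicity permits $E^c$ to contract or expand exponentially, only at a rate strictly slower than $E^s$ or $E^u$. Thus if the Anosov splitting of $M_1$ has several contraction rates---as happens for the complex and quaternionic hyperbolic spaces that are the paper's central examples---the product's $E^s$ at a horizontal point could be a proper invariant sub-bundle of $E^s_{M_1}$, say the strong stable part, with the weak stable and weak unstable parts absorbed into $E^c$. The paper sidesteps this by leaving $p=\dim E^s$ unspecified throughout, so your dimension inequality $n_1-1\le n_2-1$ in the next paragraph is unsupported.

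Second, the crux of your swap argument---that the only $d\phi_t$-invariant $(n_1-1)$-dimensional subspaces of $E^s_{M_1}\oplus E^s_{M_2}$ are the two factors themselves---fails exactly on the locus $|v_1|\lambda_1=|v_2|\lambda_2$ that you invoke: when the contraction rates on the two summands coincide there is typically a continuum of invariant subspaces interpolating between the factors, so no jump is forced. The paper's version of this step is phrased differently: it asserts that $E^s$ at a mixed point is of the form $E\oplus F$ with $E\subset E^s_1$ and $F\subset E^s_2$, and then argues that $\dim F$ cannot pass from $0$ (at $\beta=0$) to $p$ (at $\alpha=0$) while remaining an integer. That argument does not rest on the uniqueness claim you make. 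Note also that the paper's detailed proof (Theorem~\ref{t.prod}) in fact assumes \emph{both} factors are Anosov; you are attempting the stronger statement with only one Anosov factor, and your device of defining $E^s_{M_2}$ from the vertical orbits is a sensible step in that direction, but the two gaps above would need to be closed first.
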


For compact locally symmetric spaces of nonpositive curvature the following holds:

\begin{thmn}[E]
\label{t.e}
If the geodesic flow of a compact locally symmetric space of nonpositive curvature is partially hyperbolic, then its geodesic flow is Anosov.
%curvature takes values on the whole interval $[-1,-\frac{1}{4}]$.
\end{thmn}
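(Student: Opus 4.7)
The plan is to reduce to the case of an irreducible compact locally symmetric space of noncompact type and there use the restricted root decomposition of the geodesic flow to rule out partial hyperbolicity when the rank is at least two. For the reducible case, I would apply the de Rham splitting theorem to write the universal cover of $M$ as $\mathbb{R}^k \times N_1 \times \cdots \times N_m$ with each $N_i$ irreducible of noncompact type. A Euclidean factor produces a flat torus factor in $M$ along which all Jacobi fields grow at most linearly, immediately precluding partial hyperbolicity, so $k=0$. If $m\geq 2$, Theorem D handles the subcase where some factor is Anosov; in the remaining subcase (every $N_i$ has rank at least two) one restricts the $M$-geodesic flow to the invariant submanifold $SN_i\subset SM$ corresponding to a fixed factor and applies the irreducible argument below.

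Assume now that $M$ is irreducible of noncompact type. If $M$ has rank one, its geodesic flow is Anosov by Eberlein's classical theorem and we are done. So suppose $M$ has rank $r\geq 2$ and, for contradiction, that its geodesic flow is partially hyperbolic with continuous invariant splitting $TSM = E^s \oplus E^c \oplus E^u$ and uniform expansion rate $\rho > 0$ on $E^u$. Fix a Cartan subspace $\mathfrak{a}\subset\mathfrak{p}$; for each unit $H\in\mathfrak{a}$ and a chosen basepoint, let $v(H)\in SM$ be the associated tangent vector. Jacobi field analysis along $\gamma_{v(H)}$ yields, inside $T_{v(H)}SM$, the flow direction, $2(r-1)$ flat directions of zero Lyapunov exponent, and pairs of root-space Jacobi directions with exponents $\pm\beta(H)$ for each positive restricted root $\beta$. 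Decomposing any vector into its $E^s,E^c,E^u$ components and inspecting forward and backward Lyapunov behavior shows that every zero-exponent direction lies in $E^c$ and every direction of exponent strictly greater than $\rho$ lies in $E^u$.

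The contradiction follows by varying $H$. Fix a positive restricted root $\alpha$; since $r\geq 2$, the hyperplane $\alpha^\perp\subset\mathfrak{a}$ is non-trivial, so one can connect a unit vector $H_0\in\alpha^\perp$ (where $\alpha(H_0)=0$) to a unit vector $H_1$ deep in the Weyl chamber with $\alpha(H_1)>\rho$ by a continuous path on the unit sphere of $\mathfrak{a}$. The root-space direction $\mathfrak{g}_\alpha(v(H))$ varies continuously along this path, lies in $E^c(v(H_0))$ at one end, and lies in $E^u(v(H_1))$ at the other. Tracking all positive roots $\beta$ simultaneously and using the constancy of $\dim E^u$ on the connected set of regular vectors, the total dimension of root spaces $\mathfrak{g}_\beta$ forced to lie in $E^u$ --- one summand for every positive $\beta$ with $\beta(H)>\rho$ --- jumps as $H$ crosses any hypersurface $\{\beta(H)=\rho\}$, contradicting the constancy of $\dim E^u$.

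I expect the main obstacle to be rigorously identifying $E^u(v(H))$ on the regular locus with the span of root spaces $\mathfrak{g}_\beta$ satisfying $\beta(H)>\rho$. The cleanest route uses that the geodesic flow is a one-parameter subgroup of the right action of the full Cartan subgroup $A\subset G$ on the homogeneous space $\Gamma\backslash G/M_0$, where $M_0$ is the centralizer of $A$ in $K$; continuity together with invariance under this abelian action forces any continuous invariant subbundle to be a union of root spaces on the regular set, and then the Weyl-chamber-wall dimension count closes the argument.
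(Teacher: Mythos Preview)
Your core argument is the paper's: vary $H$ on the unit sphere of a maximal abelian $\mathfrak{a}$ with $\dim\mathfrak{a}\geq 2$, observe that along each $\gamma_{v(H)}$ the invariant root-space Jacobi directions must each land in one of $E^s,E^c,E^u$ according to the size of $|\alpha(H)|$, and derive a contradiction from continuity as $H$ moves so that $|\alpha(H)|$ changes regime. The paper phrases the endgame as passing from a regular $v$---where $E^u$ is the span of the root spaces $P^u_{\alpha_i}$ with the largest $|\alpha_i(v)|$---to $v'\in\ker\alpha_1$ and invoking ``as in the proof of the product metric, there is no way to go from one decomposition to the other continuously''; your path from $H_0\in\alpha^\perp$ to a chamber point $H_1$ is the same maneuver.

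Two remarks. First, your de Rham reduction is unnecessary and has a small gap: restricting the $M$-geodesic flow to $SN_i\subset SM$ does not automatically restrict the partially hyperbolic splitting, since $E^u$ over a point of $SN_i$ may contain directions transverse to $N_i$. The paper avoids this by noting that any product or Euclidean factor already gives $\dim\mathfrak{a}\geq 2$, so the root argument applies directly to $M$ without first peeling off factors. Second, the obstacle you flag (identifying $E^u(v(H))$ with a specific span of root spaces) is real, and the paper glosses over it too. The cleanest closure does not need the $A$-action: the global constants $\lambda<\mu<0$ force $|\alpha(H)|\notin(-\mu,-\lambda)$ for every unit $H$ and every root $\alpha$ (decompose a nonzero $w\in P^u_\alpha$ as $w_s+w_c+w_u$; if $w_u\neq 0$ then the exact growth $e^{|\alpha(H)|t}$ dominates $e^{-\lambda t}$, while if $w_u=0$ it is bounded by $e^{-\mu t}$). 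Since $|\alpha(\cdot)|$ is continuous on the unit sphere of $\mathfrak{a}$ and vanishes on the nonempty set $\alpha^\perp$, the intermediate value theorem forces $|\alpha(H)|\leq -\mu$ for all $H$ and all $\alpha$, whence every root direction lies in $E^s\oplus E^c$ and $E^u=0$, contradicting partial hyperbolicity.
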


The proof of theorem D and theorem E imply the following:

\begin{thmn}[F]
If $(M,g)$ is a compact Riemannian manifold with nonpositive sectional curvature and partially hyperbolic geodesic flow then $(M,g)$ has rank one.
\end{thmn}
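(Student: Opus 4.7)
The plan is to argue by contradiction, using Ballmann's rank rigidity theorem in tandem with Theorems D and E. Suppose $(M,g)$ is compact, nonpositively curved, has partially hyperbolic geodesic flow, and has $\mathrm{rank}(M,g)\geq 2$. My first step is to invoke Ballmann's rank rigidity theorem for compact nonpositively curved manifolds, which tells us that $M$ falls into (at least) one of two cases: (i) $M$ is locally symmetric of higher rank, or (ii) a finite cover of $M$ splits isometrically as a nontrivial Riemannian product. In case (i) I apply Theorem E directly: the partially hyperbolic geodesic flow must be Anosov. But a locally symmetric space of rank $k\geq 2$ contains isometrically embedded $k$-flats in its universal cover, and along any geodesic inside such a flat there are $k-1$ linearly independent parallel Jacobi fields perpendicular to the flow direction. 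Their corresponding vectors in $T(T^1M)$ have zero Lyapunov exponent for the linearized geodesic flow, incompatible with the Anosov condition, a contradiction.

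In case (ii), partial hyperbolicity pulls back to any finite cover, so I may assume $(M,g)=(M_1\times M_2,\,g_1+g_2)$ with partially hyperbolic geodesic flow and both $M_i$ compact, nonpositively curved, of dimension strictly smaller than $\dim M$. I proceed by induction on $\dim M$. A sublemma, to be discussed below, asserts that the partial hyperbolicity of the product geodesic flow restricts to partial hyperbolicity of each factor. Granting this, the inductive hypothesis (Theorem F in smaller dimensions) forces each $M_i$ to have rank one. A standard fact in nonpositive curvature, based on the convexity of $t\mapsto |J(t)|^2$, identifies bounded Jacobi fields with parallel ones; consequently the continuous center bundle $E^c$ on $T^1 M_i$ has dimension equal to $\mathrm{rank}(M_i)=1$, so the flow on each $M_i$ is actually Anosov. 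Theorem D then rules out partial hyperbolicity of the product, the desired contradiction.

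The main obstacle is the sublemma in case (ii). To establish it, one considers the invariant submanifold $N_1\subset T^1(M_1\times M_2)$ consisting of unit vectors purely tangent to $M_1$, naturally diffeomorphic to $T^1M_1\times M_2$ and carrying the restricted geodesic flow $\phi^1_t\times\mathrm{id}_{M_2}$. The $M_2$-directions along $N_1$ are everywhere parallel (they correspond to trivially parallel Jacobi fields in the $M_2$-factor) and hence must sit inside the ambient $E^c$; after quotienting out this flat $M_2$-contribution one recovers an invariant splitting on $T^1M_1$ whose contraction and expansion estimates are inherited from the ambient partially hyperbolic ones. The base case of the induction, $\dim M=2$, is immediate: a compact nonpositively curved surface of rank $\geq 2$ is flat, and a flat torus has non-partially-hyperbolic geodesic flow. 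The delicate technical point is disentangling the $M_2$-contribution from genuine center directions of the $M_1$ flow near the degenerate locus $v_1=0$ or $v_2=0$, where the product unit tangent bundle fails to look like a product of the factor unit tangent bundles.
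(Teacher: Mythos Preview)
Your overall strategy---invoke rank rigidity to reduce to the higher-rank locally symmetric case and the reducible (product) case, then kill the first with Theorem~E and the second with Theorem~D---is exactly the paper's route. Case~(i) is fine and matches the paper.

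Case~(ii), however, contains a genuine gap, and the detour through induction and Anosov factors is unnecessary. The problematic step is the sentence ``consequently the continuous center bundle $E^c$ on $T^1 M_i$ has dimension equal to $\mathrm{rank}(M_i)=1$, so the flow on each $M_i$ is actually Anosov.'' You correctly recall that in nonpositive curvature bounded Jacobi fields are parallel, but you never justify that $E^c$ consists of \emph{bounded} Jacobi fields. Partial hyperbolicity only demands that center vectors grow more slowly than $E^u$ and more quickly than $E^s$; sub-exponential growth is allowed. Concretely, if some geodesic admits a perpendicular parallel field $J_0$, then both $(J_0,0)$ and $(0,J_0)$ lie in $E^c$ (neither can have a nonzero $E^s$ or $E^u$ component, by comparing growth rates), yet the second corresponds to the Jacobi field $tJ_0$, which is unbounded. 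So $\dim E^c\geq 2$ there, hence everywhere by continuity, even though the rank of $M_i$ may still be one. The identification $\dim E^c=\mathrm{rank}(M_i)$ fails.

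The paper sidesteps this entirely. Its proof of Theorem~D does not actually use that the factors are Anosov; the argument works for any Riemannian product. The key observation is already in your sublemma: along unit vectors tangent to $M_1$ the $M_2$-directions are parallel and therefore sit in $E^c$, forcing $E^s\oplus E^u$ into the $T M_1$-part; symmetrically, along unit vectors tangent to $M_2$ one has $E^s\oplus E^u$ in the $T M_2$-part. Now vary the unit vector continuously from the $M_1$-slice to the $M_2$-slice. The continuous subbundle $E^s$, of fixed positive dimension, would have to pass from living entirely in the $T M_1$-summand to living entirely in the $T M_2$-summand, which is impossible. That single continuity argument rules out partial hyperbolicity of \emph{any} product, so after passing to a finite cover you are done in case~(ii) without induction and without ever needing the factors to be Anosov.
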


Moreover, 

\begin{thmn}[G]
If $(M^n,g)$ is a Riemannian manifold with partially hyperbolic geodesic flow then $n$ is even, and if $n \equiv 2 \mod 4$, then $\dim E^s = 1$ or $n-1$.
\end{thmn}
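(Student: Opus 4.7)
The proof will be carried out on the unit tangent bundle $SM$ using the canonical Liouville contact $1$-form $\alpha$ (whose Reeb vector field is the geodesic field $X$) and its symplectic derivative $d\alpha$ restricted to $\xi:=\ker\alpha$; both are preserved by $d\phi_t$. I first extract the standard symplectic--hyperbolic linear algebra from partial hyperbolicity. Invariance of $\alpha$ together with exponential contraction (respectively expansion) of $d\phi_t$ on $E^s$ (respectively $E^u$) forces $E^s,E^u\subset\xi$ and $X\in E^c$. Invariance of $d\alpha$ and the same estimates force $E^s,E^u$ to be $d\alpha$-isotropic. A short argument shows that $d\alpha$ pairs $E^s$ with $E^u$ nondegenerately: any $v\in E^s$ that is $d\alpha$-orthogonal to all of $E^u$ must sit in $E^c\cap E^s=\{0\}$, so $\dim E^s=\dim E^u=:k$ and $E^s\oplus E^u$ is symplectic. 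Using the subexponential behavior along $E^c$, the subbundle $E^{cw}:=E^c\cap\xi$ coincides with the $d\alpha$-symplectic complement of $E^s\oplus E^u$ in $\xi$, and is therefore itself symplectic of even rank $2m$. The dimension count $\dim SM=2n-1=2k+(2m+1)$ then reads $n=k+m+1$.

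The parity of $n$ is the main obstacle, since the identity $n=k+m+1$ alone imposes no constraint modulo $2$. To upgrade it I plan to use geodesic-flow-specific structure on $\xi$: the Sasaki almost complex structure $J$ (compatible with $d\alpha$ and exchanging the horizontal and vertical rank $n-1$ Lagrangian subbundles $H',V'\subset\xi$) and the time-reversing involution $\iota(p,v)=(p,-v)$, which is anti-contact ($\iota^\ast\alpha=-\alpha$), exchanges $E^s$ with $E^u$, and preserves $E^{cw}$ anti-symplectically. Combining $J$ with $\iota$, and exploiting that the eigenvalues of the dynamics on $E^{cw}$ at every closed orbit lie on the unit circle, I expect to produce a $\phi_t$-invariant, $d\alpha$-compatible complex structure on $E^{cw}$, making $E^{cw}$ a Hermitian subbundle of complex rank $m$. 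Together with the Hermitian structure on $E^s\oplus E^u\cong E^s\oplus(E^s)^\ast$ coming from the Lagrangian pairing, this turns $\xi$ into a Hermitian bundle of complex rank $k+m=n-1$, and matching this Hermitian structure against the ambient one inherited from $H'\oplus V'$ then forces $n$ to be even.

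For the refinement in the congruence class $n\equiv 2\pmod 4$ I would push the characteristic-class comparison further. The Lagrangian isomorphism $E^u\cong(E^s)^\ast$ yields the total Chern-class identity $c(\xi)=c(E^s)\cdot\overline{c(E^s)}\cdot c(E^{cw})$, and in the arithmetic regime $n-1\equiv 1\pmod 4$ only the two extreme cases $k=1$ (whence $m=n-2$) and the Anosov endpoint $k=n-1,\,m=0$ are compatible with this identity on $H^\ast(SM)$; this will give the dichotomy $\dim E^s\in\{1,n-1\}$. The hardest step throughout is producing a genuinely $\phi_t$-invariant, $d\alpha$-compatible complex structure on $E^{cw}$: if the direct $J$--$\iota$ averaging turns out to be too crude, the fallback is to linearize at a closed orbit and run the parity argument for the Poincar\'e return map, where the invariant splitting combined with the antisymplectic involution $d\iota$ reduces everything to a finite-dimensional symplectic problem with transparent parity constraints, and then to globalize using compactness of $SM$.
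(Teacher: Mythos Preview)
Your symplectic linear algebra in the first paragraph is correct: $E^s,E^u\subset\ker\alpha$ are $d\alpha$--isotropic, paired nondegenerately, and $E^{cw}=E^c\cap\ker\alpha$ is symplectic of rank $2m$, giving $n=k+m+1$. But from this point on the plan breaks down. Your claim that ``the eigenvalues of the dynamics on $E^{cw}$ at every closed orbit lie on the unit circle'' is false: partial hyperbolicity only gives the two--sided bound $Ce^{t\mu}\le\|d\phi_t|_{E^c}\|\le Ce^{-t\mu}$ with $\mu<0$, so return maps on $E^{cw}$ can perfectly well be hyperbolic. Consequently there is no reason to expect a $\phi_t$--invariant $d\alpha$--compatible complex structure on $E^{cw}$, and the $J$--$\iota$ averaging you propose will not produce one. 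Even granting such a structure, your comparison step is empty: $\xi$ \emph{already} carries the Hermitian structure from $J$, and a second Hermitian structure of the same complex rank $n-1$ imposes no parity constraint on $n$. The Chern--class identity $c(\xi)=c(E^s)\overline{c(E^s)}c(E^{cw})$ holds on any base, so by itself it cannot single out $k\in\{1,n-1\}$; your ``only the two extreme cases are compatible'' is asserted, not argued. The closed--orbit fallback does not help either, since finite--dimensional symplectic matrices exist in every even dimension with any admissible spectrum.

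The argument in the paper is entirely different and much shorter. Fix a point $p\in M$ and look only at the fiber $U_pM\cong S^{n-1}$. Over each $v\in U_pM$ the contact hyperplane splits as $H'(v)\oplus V'(v)$ with both summands identified with $v^\perp\cong T_v(U_pM)$, and $V'$ is Lagrangian. The set of isotropic $k$--planes meeting $V'(v)$ nontrivially has codimension $>1$ in the isotropic Grassmannian when $k<n-1$, so one can perturb the section $v\mapsto E^s(v)$ off the vertical; projecting along $V'$ to $H'\cong T(U_pM)$ then yields a continuous tangent $k$--plane field on $S^{n-1}$. Now invoke Steenrod's theorem: $S^{n-1}$ admits no such field when $n-1$ is even (forcing $n$ even), and when $n-1\equiv 1\pmod 4$ no such field exists for $2\le k\le n-3$, forcing $k\in\{1,\,n-2,\,n-1\}$. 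The case $k=n-1$ is vacuous since a field of top--dimensional planes always exists. The whole obstruction lives on a single sphere fiber; no invariant complex structures or Chern classes on $SM$ are needed.
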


Roughly speaking, the strategy of the proof of theorem A  follows these steps:

\begin{itemize}
\item[1.] We chose a metric whose geodesic flow is Anosov and whose hyperbolic invariant splitting is of the form $T(UM) = E^{ss} \oplus E^s \oplus \left\langle X \right\rangle \oplus E^u \oplus E^{uu}$ (section \ref{s.orig}); 
\item[2.] We take a closed geodesic $\gamma$ without self-intersections (section \ref{s.prop});
\item[3.] We change the metric in a tubular neighborhood of $\gamma$ in $M$, such that along $\gamma$ the strong subbundles ($E^{ss}$ and $E^{uu}$) remain invariant and the weak subbundles disappear, becoming a central subbundle with no hyperbolic behavior (section \ref{s.geode}):
\item[4.] Outside the tubular neighborhood of $\gamma$, the dynamics remains hyperbolic;
\item[5.] We show that for the geodesics that intersect the tubular neighborhood the cones associated to the extremal subbundles ($E^{ss}$ and $E^{uu}$)  are preserved (sections \ref{s.para1}, \ref{s.para2}, \ref{s.transv}, \ref{s.phcone}); 
\item[6.] We prove that for vectors in the unstable cones there is expansion, and for vectors inside the stable cones there is contraction, under the action of the derivative of the new geodesic flow (section \ref{s.expo}).
\end{itemize}

\begin{remark}
In the case of the geodesic flow of a compact locally symmetric Riemannian manifold $(M,g)$ of nonconstant negative curvature, the tangent bundle of the unitary tangent bundle $UM$ splits in many invariant subbundles: $T(UM) = E^{ss} \oplus E^{s} \oplus \langle X \rangle \oplus E^u \oplus E^{uu}$. If $\theta \in UM$, $\zeta \in E^{ss}(\theta)$ then $|d_{\theta} \zeta|$ contracts exponentially fast as $t \rightarrow \infty$, faster than if $\zeta \in E^{s}$. If $\zeta \in E^{uu}$ then $|d_{\theta} \zeta|$ expands exponentially fast as $t \rightarrow \infty$, faster than if $\zeta \in E^{u}$ (see section \ref{s.orig}). 
\end{remark}

We would like to recall that in the symplectic context, the existence of  dominated splitting with two subbundles of equal dimension implies hyperbolicity. This was first observed by Newhouse for surfaces maps \cite{Ne}, later by  Ma\~n\'e in any dimension \cite{M2} for symplectic maps, by Ruggiero in the context of geodesic flows \cite{R} and Contreras for symplectic and contact flows \cite{Co1}. We want to point out that these results do not contradict ours: the splitting for the examples of theorem \ref{t.a} and theorem \ref{t.b} contain more than two invariant subbundles.

There are partially  hyperbolic $\Sigma$-geodesic flows, defined over a distribution $\Sigma \subsetneqq TM$ which arise in the study of the dynamics of free particles in a system with
constraints (see  \cite{CKO}). However, if the distribution is involutive then the leaves of the distribution have negative curvature, and we are again in the Anosov geodesic flows case. If the distribution is not involutive, the $\Sigma$-geodesic flow is not a geodesic flow.

The article is organized as follows:

In the second section of the article, we introduce basic results about  geodesic flows, partial hyperbolicity and the equivalent property of the proper invariance of cone fields \cite{P}, \cite{HP}. 

In the third section we prove theorem D. We show that product metrics are not examples of partially hyperbolic non-Anosov geodesic flows.

In the fourth section we introduce properties and the classification of locally symmetric spaces of negative curvature which are the natural candidates to deform into partially hyperbolic non-Anosov geodesic flows.

In the fifth section we prove theorem B, and so theorem A, except for the transitivity of some of the examples. We show that the deformed metric has a partial hyperbolic non-Anosov geodesic flow. We give a proof of the proper invariance of the strong cones, i.e., the derivative of the geodesic flow brings the strong unstable cone field properly inside itself, and the derivative of the inverse of the geodesic flow brings the stable cone field properly inside itself. The proof is based on the calculation of the variation of the opening of the cones of an appropriate cone field, and then we prove the exponential expansion or contraction for vectors in the strong unstable and stable cones.

In the sixth section we prove theorem E. We show that compact locally symmetric spaces of nonpositive sectional curvature are not partially hyperbolic, i.e., they have dominated splitting with many invariant subbundles, except the spaces of nonconstant negative sectional curvature. Locally symmetric spaces of constant negative curvature are Anosov with only two invariant subbundles. Locally symmetric spaces of nonconstant negative curvature are Anosov with at least four invariant subbundles in their dominated splitting, so they are the candidates for the deformation, since they have the property mentioned in the first item of the strategy above  \cite{E2},\cite{E3}, \cite{J}.

In the last section we prove theorems F and G and the transitivity of some of the examples, which is stated in theorems A and B. We show some obstructions to the existence of a partially hyperbolic geodesic flow. In general, there are obstructions for the rank of the manifold if the Riemannian manifold has nonpositive sectional curvature, and for the dimension of the Riemannian manifold and the dimension of the hyperbolic invariant subbundles.

\vspace{5 mm}

\textbf{\large Acknowledgements:}

\vspace{5 mm}

The first author would like to thank Luis Florit, Marcelo Viana, Rafael Ruggiero, Leonardo Macarini, Henrique Bursztyn, Umberto Hryniewicz, Jacob Palis Jr., Fernando Cod\'a, Alexander Arbieto, Gonzalo Contreras, Wolfgang Ziller for fruitful conversations concerning this work. We would like to thank Gabriel Paternain who presented us the compact K\"ahler manifold of constant holomorphic curvature $-1$ and made suggestions after we finished a first draft of the article. Also the first author would like to thank Rafael Ruggiero for the suggestion to study the case of locally symmetric spaces, a more general case than the one studied in the first author's thesis. Through this work the first author also thanks the financial support granted by CNPq through his doctorate under the advice of Enrique Pujals at Impa, and the support granted by Capes for his post-doctorate under the advice of Alexander Arbieto at UFRJ.

\section{Preliminaries}
\label{s.predef}

In this section, we give some preliminary definitions. In the first subsection, the definitions are about geodesic flows. The basic reference for this subsection is the book by Paternain \cite{P}.  In the second subsection, we give the main definitions about partial hyperbolicity and the basic reference is the survey by Hasselblat and Pesin \cite{HP}. 

\subsection{Geodesic flows}

A Riemannian manifold $(M,g)$ is a $C^\infty$-manifold with an Euclidean inner product $g_x$ in each $T_xM$ which varies smoothly with respect to $x \in M$. So a Riemannian metric is a smooth section $g: M \to Symm_2^+(TM)$, where $Symm_2^+(TM)$ is the set of positive definite bilinear and symmetric forms in $TM$. Along the article we will consider the topology of the space of metrics of a manifold $M$ to be the $C^2$-topology on the space of these sections. 

The geodesic flow of the metric $g$ is the flow 
$$\phi_t: TM \to TM ,$$ $$\phi_t(x,v) = (\gamma_{(x,v)}(t),\gamma'_{(x,v)}(t)),$$
\noindent such that $\gamma_{(x,v)}$ is the geodesic for the metric $g$ with initial conditions $\gamma_{(x,v)}(0) = x$ and $\gamma'_{(x,v)}(0) = v$, $x \in M$, $v \in T_xM$. Since the speed of the geodesics is constant, we can consider the flow restricted to $UM := \{ (x,v) \in TM : g_x(v,v) = 1 \}$.

\begin{definition}
Let  $\pi_M: TM \to M$ be the canonical projection of the tangent bundle. The vertical subbundle, $\pi_V: V(TM) \to TM$, is the bundle whose fiber at $\theta \in T_xM$ is given by $V(\theta) = ker(d_{\theta}\pi_M)$.
\end{definition}

\begin{definition}\label{def.basic}
$K: T(TM) \to TM$, which is called the connection map associated to the metric $g$, is defined as follows: given $\xi \in T_{\theta}TM$ let $z: (-\epsilon,\epsilon) \to TM$ be an adapted curve to $\xi$; let $\alpha: (-\epsilon,\epsilon) \to M: t \to \pi_M \circ z (t)$, and $Z$ the vector field along $\alpha$ such that $z(t) = (\alpha(t),Z(t))$; then $K_{\theta}(\xi) := (\nabla_{\alpha'}Z)(0)$. $\pi_H: H(TM) \to TM$, the horizontal subbundle, is given by $H(\theta) := ker(K_{\theta})$.
\end{definition}

Some properties of $H$ and $V$ are: 
\begin{itemize}
\item[1.]$H(\theta) \cap V(\theta) = 0$, 
\item[2.]$d_{\theta}\pi$ and $K_{\theta}$ give identifications of $H(\theta)$ and $V(\theta)$ with $T_xM$, 
\item[3.]$T_{\theta}TM = H(\theta) \oplus V(\theta)$.
\end{itemize}

The geodesic vector field $G: TM \to T(TM)$ is given by $G(v)=(v,0)$ in the decomposition $H \oplus V \approx \pi^*TM \oplus \pi^*TM$.

The decomposition in horizontal and vertical subbundles allows us to define the Sasaki metric on $TM$:
\begin{eqnarray*} \widehat{g}_{\theta}(\xi,\eta) & := & g_x(d_{\theta}\pi(\xi),d_{\theta}\pi(\eta)) + g_x(K_{\theta}(\xi),K_{\theta}(\eta)) \\ & = & g_x(\xi_h,\eta_h) + g_x(\xi_v,\eta_v) \end{eqnarray*}
\noindent for $\xi$ and $\eta \in T_{\theta}TM$, with $\xi = (\xi_h,\xi_v)$ and $\eta = (\eta_h,\eta_v)$ in the decomposition $T_{\theta}TM = H(\theta) \oplus V(\theta)$, with $\xi_h$ and $\eta_h \in T_xM \cong H(\theta)$, $\xi_v$ and $\eta_v \in T_xM \cong V(\theta)$.

It also allows us to define a symplectic $2$-form and an almost complex structure $\widetilde{J}$ on $TM$ and a contact form on $UM$:

\begin{eqnarray*} \Omega_{\theta}(\xi,\eta) & := & g_x(d_{\theta}\pi(\xi),K_{\theta}(\eta)) - g_x(K_{\theta}(\xi),d_{\theta}\pi(\eta)) \\ & = & g_x(\xi_h,\eta_v) - g_x(\eta_h,\xi_v), \\
\widetilde{J}(\xi_h,\xi_v) & := & (-\xi_v,\xi_h), \\
\alpha_{\theta}(\xi) & := & \widehat{g}_{\theta}(\xi,G(\theta)) = g_x(d_{\theta}\pi(\xi),v) = g_x(\xi_h,v).
\end{eqnarray*}

\begin{definition}
The geodesic flow leaves $UM$ invariant, and we can define a contact form on $UM$ such that its Reeb vector field is the geodesic vector field: $\pi_S: S(UM) \to UM$ is the contact structure bundle on $UM$, with fiber $S(\theta) := ker(\alpha_{\theta})$. It is an invariant subbundle for the geodesic flow, and $\mathbb{R} \cdot G \oplus S = T(UM)$. The vector bundle $S(UM)$ also has a decomposition on horizontal and vertical subbundles.
\end{definition}

In the next definition, we relate the derivative of the geodesic flow with the Jacobi fields of the metric that generates the flow.

\begin{definition}
Let $\gamma_{\theta}$ be a geodesic on the Riemannian manifold $(M,g)$ with initial conditions $\gamma_{\theta}(0) = x$, $\gamma'_{\theta}(0) = v$, where $\theta = (x,v)$, $x \in M$, $v \in T_xM$. A Jacobi field $\zeta(t)$ along a geodesic $\gamma_{\theta}$ is a vector field obtained by a variation of the geodesic $\gamma_{\theta}$ through geodesics: 
$$\zeta(t) := \frac{\partial}{\partial s}|_{s=0} \pi \circ \phi_t(z(s)),$$ 
\noindent where $z(0) = \theta$, $z'(0) = \xi$ and $z(s) = (\alpha(s),Z(s))$ (where $z, \alpha, Z$ were introduced in definition \ref{def.basic}).
It satisfies the following equation:
$$\zeta'' + R(\gamma'_{\theta},\zeta)\gamma'_{\theta} = 0.$$
\noindent Its initial conditions are:
$$\zeta(0) = \frac{\partial}{\partial s}|_{s=0} \pi \circ z(s) = d_{\theta}\pi \xi = \xi_h,$$
\begin{eqnarray*} \zeta'(0) & = & \frac{D}{dt} \frac{\partial}{\partial s}|_{t=0,s=0} \pi \circ \phi_t (z(s)) = \frac{D}{\partial s} \frac{\partial}{\partial t}|_{s=0,t=0}  \pi \circ \phi_t (z(s)) \\ & = & \frac{D}{\partial s} |_{s=0} Z(s) = K_{\theta} \xi = \xi_v. \end{eqnarray*}

\end{definition}
\noindent Observe that the choice of $\xi$  determines a Jacobi field $\zeta_\xi$  along $\gamma_\theta$ and so the derivative of a geodesic flow is: $d_{\theta}\phi_t(\xi) = ( \zeta_{\xi}(t), \zeta'_{\xi}(t) )$.

We can restrict the action of the derivative of the geodesic flow to the contact structure: $d_{\theta}\phi_t(\xi) : S(UM) \to S(UM)$. In this case the Jacobi fields associated with the contact structure are the orthogonal to the geodesics on $M$: $\zeta: \mathbb{R} \to \gamma_{\theta}^*TM$ such that $\zeta(0) = \xi_h$, $\zeta'(0) = \xi_v$, where $\xi \in T_{\theta}TM$, $\xi_h \bot \theta$, $\xi_v \bot \theta$.

\begin{remark}
We define the curvature tensor $R: \Gamma(TM) \times \Gamma(TM) \times \Gamma(TM) \to \Gamma(TM)$ as it is done in do Carmo's book \cite{Ca}:

$$R(X,Y)Z := \nabla_Y \nabla_X Z - \nabla_X \nabla_Y Z + \nabla_{[X,Y]} Z.$$
\end{remark}

\subsection{Partial hyperbolicity}
\label{ss.ph}

\begin{definition}
A partially hyperbolic flow $\phi_t : N \to N$ in the manifold $N$ generated by the vector field $X: N \to TN$ is a flow such that its quotient bundle $TN / \langle X \rangle$ has an invariant splitting $TN / \langle X \rangle = E^s \oplus E^c \oplus E^u$ such that these subbundles are non-trivial and have the following properties:
$$ d_x\phi_t (E^s(x)) = E^s(\phi_t(x)), $$ $$d_x\phi_t (E^c(x)) = E^c(\phi_t(x)),$$ $$d_x\phi_t (E^u(x)) = E^u(\phi_t(x)), $$
$$ || d_x\phi_t |_{E^s} || \leq C \exp(t \lambda),$$ $$|| d_x\phi_{-t} |_{E^u} || \leq C \exp(t \lambda), $$ $$C \exp(t \mu) \leq || d_x\phi_t |_{E^c} || \leq C \exp(- t \mu),$$
\noindent for some constants $\lambda < \mu < 0 < C$ and for all $x \in N$. In the context of the present paper, $N$ is the unitarian tangent bundle $SM$.
\end{definition}  

\begin{definition}
A splitting $E \oplus F$ of the quotient bundle $TN / \langle X \rangle$ is called a dominated splitting if:
$$ d_x\phi_t (E(x)) = E(\phi_t(x)),$$ $$ d_x\phi_t (F(x)) = F(\phi_t(x)), $$
$$ || d_x\phi_t |_{E(x)} || \cdot || d_{\phi_t(x)} \phi_{-t} |_{F(\phi_t(x))} || < C \exp(- t \lambda)$$
\noindent for some constants $C$, $\lambda > 0$ and $x \in N$.

We only need to prove the existence of dominated splitting because of well known result in conservative dynamics \cite{Co1}, \cite{R}:

\begin{lemma}
\label{l.contreras}
Let $N$ be a smooth manifold and $\phi_t: N \to N$ a flow on $N$ such that there is a dominated splitting $TN / \langle X \rangle = E^{cs} \oplus E^c \oplus E^{cu}$, then for all $x \in N$ there are positive real number $C$ and $\lambda$ such that $$ || d_x\phi_t |_{E^{cs}} || \leq C \exp(t \lambda),$$ $$|| d_x\phi_{-t} |_{E^{cu}} || \leq C \exp(t \lambda).$$
\end{lemma}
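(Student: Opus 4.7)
The plan is to read the conclusion at face value: it asserts only a uniform at-most-exponential growth bound $\|d_x\phi_t|_{E^{cs}}\| \leq C e^{\lambda t}$ with positive $C$ and $\lambda$, and the analogous bound for $d\phi_{-t}$ on $E^{cu}$. An inequality of this form holds for every $C^1$ flow on a compact manifold, independently of the dominated-splitting hypothesis, and I would prove it by Gronwall. In the paper's setting $N = UM$ is compact (since $M$ is compact), so $K := \sup_{x \in N} \|D_x X\|$ is finite in the Sasaki metric.

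Next, I would apply Gronwall's inequality to the variational equation $\frac{d}{dt} d_x\phi_t = D_{\phi_t(x)} X \circ d_x\phi_t$ with initial data $d_x\phi_0 = \mathrm{Id}$, which gives $\|d_x\phi_t\| \leq e^{Kt}$ for all $x \in N$ and $t \geq 0$; the same argument applied with the vector field $-X$ yields $\|d_x\phi_{-t}\| \leq e^{Kt}$. Since $E^{cs}$ and $E^{cu}$ are $d\phi_t$-invariant subbundles of the quotient $TN/\langle X\rangle$, and the quotient operator norm is dominated by the ambient operator norm on any complement of $\langle X\rangle$, the restrictions inherit these bounds. Taking $\lambda := K$ and $C := 1$ closes the argument.

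The main thing worth flagging is that the conclusion as literally worded is strictly weaker than what the cited theorems of Contreras \cite{Co1} and Ruggiero \cite{R} deliver in the conservative setting: there, a dominated splitting whose extremal bundles are Lagrangian with respect to the symplectic or contact form is promoted to genuine uniform hyperbolicity, with exponents of the opposite sign. If the paper in fact needs that stronger statement downstream (which the phrase ``well known result in conservative dynamics'' suggests), the real work will be to verify the Lagrangian condition for $E^{cs}$ and $E^{cu}$ inside $S(UM)$ and then invoke those theorems. For the statement as given, however, neither the splitting nor the contact structure plays any role, and the proof reduces to a one-line Gronwall estimate on a compact manifold.
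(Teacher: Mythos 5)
Your diagnosis of the wording is right, but the proof you give does not establish the lemma the paper actually uses. Read in context, the statement has a sign typo: the paper's own definition of partial hyperbolicity writes the contraction bounds as $\|d_x\phi_t|_{E^s}\|\leq C\exp(t\lambda)$ with $\lambda<0$, and Lemma \ref{l.contreras} is invoked in Proposition \ref{p.expo} precisely to conclude that ``the invariant subbundles $E^{cs}$ and $E^{cu}$ are hyperbolic,'' i.e.\ uniformly contracted and expanded. A bound with positive exponent, valid for every $C^1$ flow on a compact manifold, would make the lemma vacuous: it would use neither the dominated splitting nor the conservative structure, and it could not produce the exponential expansion inside the unstable cones that Theorem B needs. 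So the Gronwall estimate, while correct as far as it goes, proves a different (and useless) statement; moreover, as literally stated the lemma does not assume compactness of $N$, so even the uniform Gronwall constant requires the implicit restriction to $N=UM$ that you supply yourself.

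For comparison, the paper does not prove this lemma at all: it is quoted as a known result of Contreras \cite{Co1} and Ruggiero \cite{R} for symplectic/contact flows, and the hypotheses that make it true --- the flow preserves the contact structure $S(UM)$ with its symplectic form, and the extremal bundles satisfy $\dim E^{cs}=\dim E^{cu}$ (so that domination forces them to be Lagrangian and pairs them nondegenerately via the invariant $2$-form) --- are exactly the ones you flag in your last paragraph as ``the real work.'' That work is the content of the lemma; without it the statement is false for general flows with a three-bundle dominated splitting (derived-from-Anosov--type flows have dominated splittings whose extremal bundles are not hyperbolic). To close the gap you should either do as the paper does and cite \cite{Co1}, \cite{R} after verifying that $E^{cs}$ and $E^{cu}$ are Lagrangian subbundles of $S(UM)$ of equal dimension, or reproduce that argument: the invariance of the symplectic form gives $\omega(d\phi_t u,d\phi_t v)=\omega(u,v)$ for $u\in E^{cs}$, $v\in E^{cu}$, and domination then converts this conservation law into uniform contraction on $E^{cs}$ and expansion on $E^{cu}$.
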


\end{definition}

\subsubsection{Partial hyperbolicity and cone fields}\label{s.ph and cones}

There is a useful criterion  for verifying partial hyperbolicity, called the cone criterion:

Given $x \in N$, a subspace $E(x) \subset T_xN$ and a number $\delta$, we define the cone at $x$ centered around $E(x)$ with angle $\delta$ as
$$C(x,E(x),\delta) = \{ v \in T_xN : \angle(v,E(x)) < \delta \},$$
\noindent where $\angle(v,E(x))$ is the angle that the vector $v \in T_xN$ makes with its own projection to the subspace $E(x) \subset T_xN$. Sometimes, the constant $\delta$ involves in the definition of the cone $C(x,E(x),\delta)$ is called the {\it opening of the cone}.

A flow is partially hyperbolic if there are $\delta > 0$, some time $T > 0$, and two continuous cone families $C(x,E_1(x),\delta)$ and $C(x,E_2(x),\delta)$ such that:
$$d_x\phi_{-t}(C(x,E_1(x),\delta)) \subsetneqq C(x,E_1(\phi_{-t}(x)),\delta),$$
$$d_x\phi_{t}(C(x,E_2(x),\delta)) \subsetneqq C(x,E_2(\phi_{t}(x)),\delta),$$
$$\|d_x\phi_t \xi_1 \| < K \exp(t \lambda),$$ $$ \|d_x\phi_{-t} \xi_2 \| < K \exp(t \lambda),$$
\noindent for $\xi_1 \in C(x,E_1(x),\delta)$, $\xi_2 \in C(x,E_2(\phi_t(x)),\delta)$, some constants $K > 0$, $\lambda < 0$ and all $t > 0$.

\subsubsection{Partial hyperbolicity and angle cone variation}
\label{s. cone variation def}
Let $Pr_E: TN \to E$ be the orthogonal projection to $E$, where $\pi_E: E \to N$ is a vector subbundle of $TN$. We define 
\begin{eqnarray}
\label{eq.cone0}
\Theta(v,E) := \frac{g(Pr_E v,Pr_E v)}{g(v,v)}.
\end{eqnarray}
To prove that  the  proper invariance of cones holds, it is enough  to check the following inequality:
\begin{eqnarray}
\label{eq.cone}
\frac{d}{dt} \Theta(d_x\phi_t(v),E(\phi_t(x))) > 0
\end{eqnarray}
\noindent for $v \in \partial C(x,E(x),\delta) := \{w \in T_xN : \angle(w,E(x)) = \delta\}$. 

\begin{remark}
The quantity $\Theta(v,E)$ equals twice the square of the cosine of the angle between $v$ and the subspace $E$, so if it increases along the flow, then the cone field is properly invariant.  We call the derivative above the {\it angle cone variation}.  This calculation is inspired by the calculations in \cite{W}, although we do not use quadratic forms here.
\end{remark}

The proper invariance of the cones by the derivative of the geodesic flow implies the existence of a dominated splitting. For the exponential expansion or contraction in the unstable and stable directions, respectively, we only need to check exponential expansion or contraction inside the unstable and stable cones, respectively.

\begin{lemma}
\label{l.angle}
For a fixed $\delta > 0$, and a fixed subbundle $E \to N$, $E(x) \subset T_xN$, if inequality (\ref{eq.cone}) holds for $v \in \partial C(x,E(x),\delta)$, then the cone field is proper invariant for the geodesic flow.
\end{lemma}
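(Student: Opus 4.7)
The plan is to observe that the scalar $\Theta$ defined in (\ref{eq.cone0}) equals $\cos^{2}\bigl(\angle(v,E(x))\bigr)$, so that membership in $C(x,E(x),\delta)$, in $\partial C(x,E(x),\delta)$ and in $\overline{C(x,E(x),\delta)}$ translate under $\Theta$ into $\Theta>\cos^{2}\delta$, $\Theta=\cos^{2}\delta$ and $\Theta\geq\cos^{2}\delta$, respectively. Proper invariance of the cone field then reduces to showing that for every $v \in \overline{C(x,E(x),\delta)}$ and every $t>0$, the scalar function
$$
\Theta_v(t) := \Theta\bigl(d_x\phi_t(v),\ E(\phi_t(x))\bigr)
$$
satisfies $\Theta_v(t) > \cos^{2}\delta$.

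I would establish this by a first exit time argument. Assume, towards a contradiction, that the set $A := \{t>0 : \Theta_v(t) \leq \cos^{2}\delta\}$ is nonempty and set $t_0 := \inf A$. If $v$ lies in the interior of the cone, then $\Theta_v(0) > \cos^{2}\delta$ and continuity of $\Theta_v$ forces $t_0 > 0$; if $v \in \partial C(x,E(x),\delta)$, then hypothesis (\ref{eq.cone}) applied at $x$ gives $\Theta_v'(0) > 0$, so $\Theta_v(t) > \cos^{2}\delta$ for all small $t > 0$ and again $t_0 > 0$. Combining continuity with the definition of the infimum, I would conclude $\Theta_v(t_0) = \cos^{2}\delta$ and $\Theta_v(t) > \cos^{2}\delta$ for every $t \in (0,t_0)$, so that the left derivative of $\Theta_v$ at $t_0$ is nonpositive. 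The contradiction comes from applying the hypothesis at the point $\phi_{t_0}(x)$ to the vector $w := d_x\phi_{t_0}(v) \in \partial C\bigl(\phi_{t_0}(x), E(\phi_{t_0}(x)), \delta\bigr)$: inequality (\ref{eq.cone}) there yields a strictly positive derivative of the map $s \mapsto \Theta\bigl(d_{\phi_{t_0}(x)}\phi_s(w), E(\phi_{s+t_0}(x))\bigr)$ at $s=0$, and the flow identity $d_x\phi_{t_0+s} = d_{\phi_{t_0}(x)}\phi_s \circ d_x\phi_{t_0}$ forces this derivative to coincide with $\Theta_v'(t_0)$. This contradicts the nonpositive left derivative, so $A$ must be empty.

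The main obstacle I anticipate is essentially bookkeeping: making the identification $\Theta = \cos^{2}\angle$ precise enough to translate cone geometry into level sets of a scalar function, and verifying that the hypothesis is to be read as holding at every base point of $N$ (so that it can be invoked at the exit point $\phi_{t_0}(x)$, not only at the initial point $x$). Once those conventions are fixed the first exit time contradiction is routine and no further dynamical information is needed; in particular the argument uses nothing specific to geodesic flows.
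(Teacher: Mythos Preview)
Your argument is correct and takes a somewhat different route from the paper's. The paper first notes that $\Theta$ is homogeneous of degree zero in $v$, restricts to the compact set
\[
\partial_1 C(x,E(x),\delta)=\{v:\ g(Pr_E v,Pr_E v)=c,\ g(v,v)=1\},
\]
and uses compactness together with the strict inequality~(\ref{eq.cone}) to extract a \emph{uniform} lower bound $\frac{d}{dt}\Theta \geq a>0$ on the boundary; proper invariance is then asserted as an immediate consequence of this bound. Your first-exit-time contradiction is more elementary and in fact supplies precisely the step the paper leaves implicit (why strict positivity of the derivative on the boundary keeps trajectories inside the cone). What the paper's route buys is the uniform rate $a$: this quantitative information is what is actually used later, in sections~\ref{s.transv} and~\ref{s.phcone}, when the time a geodesic spends in the perturbation region (where cones may open) is balanced against the time spent outside (where they close at a definite rate). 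Your pointwise argument, while entirely sufficient for the lemma as stated, does not by itself yield that uniform rate. A minor remark: you identify $\Theta=\cos^2\angle(v,E)$, which is what the formula~(\ref{eq.cone0}) gives; the paper's surrounding text writes ``twice the square of the cosine'' and takes $c=2\cos^2\delta\in(1,2)$, an internal inconsistency that does not affect either proof.
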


\begin{proof}
Let $c \in (1,2)$ be such that $2\cos^2(\delta) = c$. Then 
\begin{eqnarray*}
C(x,E(x),\delta) & = & \left\{ \Theta(v,E) \geq c \right\}, \\ \partial C(x,E(x),\delta) & = & \left\{ \Theta(v,E) = c \right\}.
\end{eqnarray*} 
Notice that the quantity on the left side of (\ref{eq.cone}) is the same for $v$ and for $kv$ for every $k > 0$. Then we can calculate for $v$ such that $g(v,v) = 1$. Define
$$\partial_1 C(x,E(x),\delta) = \{ g(Pr_E v,Pr_E v) = c, g(v,v) = 1 \}.$$
Then the set of vectors in the  above defined boundary of the cones  is compact, which implies that its derivative is bounded away from zero:
$$ \frac{d}{dt} \Theta(d_x\phi_t(v),E(\phi_t(x))) \geq a > 0. $$
Its immediate consequence is that the cone field is properly invariant by the flow.
\end{proof}

\section{The geodesic flow of a product metric is not partially hyperbolic}
\label{s.whynot}

Now, we are going to show that some simple candidates for partially hyperbolic geodesic flows are not partially hyperbolic. In particular, we are going to prove that product metrics are neither Anosov nor partially hyperbolic.

A natural candidate for symplectic partially hyperbolic dynamics is the following: for any hyperbolic symplectic action $\Phi : \mathbb{R} \to Sp(E,\omega)$, $\pi: E \to B$ a symplectic bundle with $\omega$ as its symplectic $2$-form, one can produce another symplectic action $\Phi^*: \mathbb{R} \to Sp(E,\omega) \oplus Sp(B \times \mathbb{R}^2,\omega_0) : t \to \Phi(t) \oplus Id$, where $\omega_0 = dx \wedge dy$ is the canonical symplectic form of $\mathbb{R}^2 = \{(x,y): x,y \in \mathbb{R} \}$. The symplectic flow associated with this symplectic $\mathbb{R}$-action is partially hyperbolic with a central direction of dimension $2$. For geodesic flows the above described construction does not work.

\begin{theorem} Let $(M,g)$ be a Riemannian manifold  whose geodesic flow is Anosov.
Then, the product Riemannian manifold $(M \times \mathbb{T}^n, g + g_0)$ where $(\mathbb{T}^n,g_0)$ is $\mathbb{T}^n$ with its canonical flat metric, is not partially hyperbolic.
\end{theorem}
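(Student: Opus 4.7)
The approach I would take is to exploit the product structure of the geodesic flow and to exhibit a single point of the unit tangent bundle at which $d\phi_t$ has no exponential behaviour at all. Because the metric is a product, a geodesic of $(M\times\mathbb{T}^n,\,g+g_0)$ is a pair of geodesics, one in each factor, so the geodesic flow splits as $\phi_t(x,y,v_1,v_2) = \bigl(\phi^M_t(x,v_1),\,\phi^{\mathbb{T}^n}_t(y,v_2)\bigr)$, with the unit tangent bundle cut out by $|v_1|^2 + |v_2|^2 = 1$. Correspondingly, the Jacobi equation decouples into the $M$-equation $\zeta_1'' + R(\gamma_1',\zeta_1)\gamma_1' = 0$ and the flat-torus equation $\zeta_2'' = 0$, so $d\phi_t$ preserves the splitting of every tangent space into pure-$M$ and pure-torus variations in the horizontal-vertical picture.

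The crucial step is to pick the point $\theta_0 \in U(M\times\mathbb{T}^n)$ whose $M$-velocity vanishes: fix any $x\in M$, $y\in\mathbb{T}^n$, and unit vector $v_2\in T_y\mathbb{T}^n$, and set $\theta_0 := (x,y,0,v_2)$. The $M$-projection of the orbit through $\theta_0$ is the constant curve $\gamma_1 \equiv x$, so the $M$-part of the Jacobi equation along this orbit also reduces to $\zeta_1''=0$. Writing a tangent vector as $\xi = (\xi_{h,1}, \xi_{h,2}, \xi_{v,1}, \xi_{v,2})$ in the horizontal-vertical decomposition, I therefore expect the derivative of the flow to be the unipotent linear map
\begin{equation*}
d_{\theta_0}\phi_t\,\xi \;=\; \bigl(\xi_{h,1} + t\,\xi_{v,1},\; \xi_{h,2} + t\,\xi_{v,2},\; \xi_{v,1},\; \xi_{v,2}\bigr),
\end{equation*}
with only polynomial growth and therefore no exponential contraction or expansion.

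Assuming for a contradiction that the flow on $U(M\times\mathbb{T}^n)$ is partially hyperbolic with splitting $E^s\oplus E^c\oplus E^u$ of the quotient bundle, I would then show $E^s(\theta_0)=0$, which already contradicts nontriviality. The flow direction $X(\theta_0) = (0,v_2,0,0)$ keeps this form along the orbit. If $\xi$ represents a nonzero class in $E^s(\theta_0)$, the explicit formula above forces $\xi_{v,1}=\xi_{v,2}=0$, since otherwise the horizontal part of $d_{\theta_0}\phi_t\xi$ grows linearly in $t$; but then $d_{\theta_0}\phi_t\xi = (\xi_{h,1},\xi_{h,2},0,0)$ is constant and, because $[\xi]\ne 0$ in the quotient, is not proportional to $X(\theta_0)$, so $\|[d_{\theta_0}\phi_t\xi]\|$ is a positive constant for all $t$, violating the exponential bound $\|d_{\theta_0}\phi_t|_{E^s}\|\le Ce^{\lambda t}$ with $\lambda<0$. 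The only real point of care is keeping track of the quotient by $X$, which could in principle absorb the obstruction; but since $X(\theta_0)$ occupies only a one-dimensional line inside the ample horizontal directions, $[\xi]$ retains enough freedom for the contradiction to go through. I note in passing that the Anosov hypothesis on $(M,g)$ is not used anywhere—the obstruction is purely kinematic, arising from the existence of unit vectors in $U(M\times\mathbb{T}^n)$ with zero $M$-component.
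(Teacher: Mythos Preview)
Your proposal is correct and follows essentially the same approach as the paper: both single out a geodesic with zero $M$-component (i.e.\ lying in a fibre $\{x\}\times\mathbb{T}^n$), observe that the Jacobi operator $R(\gamma',\cdot)\gamma'$ vanishes identically along it, and conclude that the linearized flow there is unipotent with no exponential contraction or expansion. Your write-up is actually more careful than the paper's on two points---you verify explicitly that the quotient by $\langle X\rangle$ cannot absorb the obstruction, and you correctly note that the Anosov hypothesis on $(M,g)$ plays no role.
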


\begin{proof}
Notice that for any $x \in M$, $\{x\} \times \mathbb{T}^n$ is a totally geodesic submanifold of $(M \times \mathbb{T}^n, g + g^0)$. So, its second fundamental form is identically zero. Since the metric in $\mathbb{T}^n$ is flat this implies that, for any $x \in M$ and any $y \in \mathbb{T}^n$:
$$R(\gamma'_{(x,y,0,v)},(0,w))\gamma'_{(x,y,0,v)} = 0,$$ where $\gamma_{(x,y,0,v)}$ is the geodesic of $(M \times \mathbb{T}^n, g + g^0)$ starting at $(x,y) \in M \times \mathbb{T}^n$ with 
 $\gamma'_{(x,y,0,v)}(0) = (0,v) \in T_xM \times \mathbb{R}^n$, the covariant derivative of the geodesic starting
at $(x, y)$ with tangent vector $(0, v)$.

For a product metric in $(M_1 \times M_2,g^1+g^2)$, let us say $R$ is the curvature tensor of the product Riemannian manifold with the product metric and  $R^1$ the curvature tensor of the Riemannian manifold $M_1$. Then the following properties hold:

\begin{itemize}
\item[i.] $R(X,Y,Z,W) = R^1(X,Y,Z,W)$, for $X,Y,Z,W$ tangent to $M_1$, because of the Gauss' equation and the fact that the second fundamental form is zero \cite{Ca};
\item[ii.] $R(X,Y,Z,N) = 0$, for $X,Y,Z$ tangent to $M_1$ and $N$ tangent to $M_2$, because of Codazzi's equation and the fact that the second fundamental form is zero \cite{Ca};
\item[iii.] $R(X,N,X,\widehat{N}) = 0$, for $X,Y$ tangent to $M_1$ and $N,\widehat{N}$ tangent to $M_2$, because $K(X,N) = 0$ \cite{Ca}.
\end{itemize}

Then, for a submanifold $\{x\} \times \mathbb{T}^n$ with the flat metric:
$$R(\gamma'_{(x,y,0,v)},\cdot)\gamma'_{(x,y,0,v)} \equiv 0.$$
\noindent So, the derivative of the geodesic flow along geodesics in $\{x\} \times \mathbb{T}^n$ does not have any exponential contraction or expansion. So, there is no partially hyperbolic splitting for its geodesic flow.
\end{proof}

\begin{theorem} Let $(M_1,g^1)$ and $(M_2,g^2)$ be two Riemannian manifolds whose  geodesic flows are Anosov.
The geodesic flow of the Riemannian manifold $(M_1 \times M_2, g^1 + g^2)$ is not Anosov.
\end{theorem}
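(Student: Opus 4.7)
The plan is to exhibit, at every $\theta_0 = ((x,y),(v,0)) \in U(M_1 \times M_2)$ with $|v|_{g^1}=1$, a nonzero vector in $T_{\theta_0}(U(M_1\times M_2))$ that is transverse to the geodesic vector field and along which the derivative of the geodesic flow acts as an isometry for all times. Since the Anosov condition forces every vector transverse to the flow to split as the sum of an exponentially contracted and an exponentially expanded component, the existence of such a neutral transverse direction rules out the Anosov property.

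Fix $\theta_0$ as above and any $w \in T_yM_2 \setminus \{0\}$. The submanifold $M_1 \times \{y\}$ is totally geodesic in the product (its second fundamental form vanishes), so the geodesic through $\theta_0$ stays inside it: $\gamma_{\theta_0}(t) = (\gamma_{(x,v)}(t), y)$, where $\gamma_{(x,v)}$ is the $g^1$-geodesic of $M_1$. Define $J(t) := (0, w) \in T_{\gamma_{(x,v)}(t)}M_1 \oplus T_yM_2$ along $\gamma_{\theta_0}$. Since the $M_2$-component is constant in $t$, $J$ is parallel, and the product curvature formula encoded in items (i)--(iii) of the previous theorem's proof gives $R(\gamma'_{\theta_0}, J)\gamma'_{\theta_0} = 0$. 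Hence $J$ solves the Jacobi equation along $\gamma_{\theta_0}$ with initial data $J(0)=(0,w)$ and $J'(0)=(0,0)$.

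Reading this back through the identification $d_{\theta_0}\phi_t(\xi) = (\zeta_\xi(t), \zeta_\xi'(t))$, the vector $\xi := (0,w,0,0)$ in the horizontal--vertical Sasaki decomposition at $\theta_0$ satisfies $d_{\theta_0}\phi_t(\xi) = (0,w,0,0)$ at $\phi_t(\theta_0)$, so $\|d_{\theta_0}\phi_t(\xi)\|_{\widehat{g}} \equiv |w|_{g^2}$ for every $t \in \mathbb{R}$. Moreover $\xi$ lies in $T_{\theta_0}(U(M_1\times M_2))$ because $\xi_v = 0$, and $\xi$ is transverse to $G(\theta_0) = ((v,0),0)$ since the horizontal vectors $(v,0)$ and $(0,w)$ are $(g^1+g^2)$-orthogonal. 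If $\phi_t$ were Anosov, projecting $\xi$ onto $E^u$ along $\langle G \rangle \oplus E^s$ (by invariance of the splitting and uniform transversality, which yields a uniformly bounded projector on the compact base $UM$) and letting $t \to +\infty$ would force the $E^u$-component of $\xi$ to vanish; applying the symmetric argument as $t \to -\infty$ kills the $E^s$-component, so $\xi \in \langle G(\theta_0)\rangle$, contradicting the transversality already established.

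There is essentially no hard step in this plan: the curvature calculation is already available from the preceding theorem, and everything else is a direct reading of the Jacobi-field identification of $d\phi_t$ against the uniform hyperbolicity of a putative Anosov splitting.
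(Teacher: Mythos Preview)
Your proof is correct but proceeds along a different line from the paper. The paper argues via non-expansivity: it fixes two nearby points $x_0,x_1\in M_1$ and observes that the orbits through $(x_0,y,0,v)$ and $(x_1,y,0,v)$ in $U(M_1\times M_2)$ stay at constant Sasaki distance for all time, so the geodesic flow fails to be expansive and hence cannot be Anosov. Your argument is the infinitesimal version of the same geometric picture (with the roles of the two factors swapped): differentiating such a family of orbits at $s=0$ produces precisely the parallel Jacobi field $(0,w)$ you write down, and rather than passing through expansivity you feed the resulting bounded $d\phi_t$-orbit directly into the putative hyperbolic splitting. Your route is a bit more self-contained, since it stays entirely within the Jacobi-field formalism of Section~\ref{s.predef} and does not invoke the implication ``Anosov $\Rightarrow$ expansive''; the cost is the short projector step, which needs compactness of $U(M_1\times M_2)$ to bound the oblique projections uniformly---an assumption the paper's expansivity argument also tacitly uses.
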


\begin{proof}
The proof that this geodesic flow is not Anosov is easy. It is a classical result that $(x_0, \gamma_{(y,v)}(t))$ and $(\gamma_{(x,u)}(t),y_0)$ are geodesics of the product metric, $x_0 \in M_1$, $y_0 \in M_2$, $u \in T_xM_1$, $v \in T_yM_2$, $\gamma_{(x,u)}(0) = x$ and $\gamma'_{(x,u)}(0) = u$, $\gamma_{(y,v)}(0) = y$ and $\gamma'_{(y,v)}(0)=v$. So, we choose $x_0$ and $x_1 \in M_1$ close enough. Then $(x_0, \gamma_{(y,v)}(t))$ and $(x_1, \gamma_{(y,v)}(t))$ are two geodesics of the product metric with initial conditions $(x_0,y,0,v)$ and $(x_1,y,0,v)$. Let $dist$ be the distance function for the Sasaki metric of $U(M_1 \times M_2)$ and $dist_1$ be the distance function for the Sasaki metric of $UM_1$. The geodesic flow is not expansive, because $dist(\phi_t(x_0,y,0,v),\phi_t(x_1,y,0,v)) = dist_1((x_0,0),(x_1,0))$ and therefore, for  any $\epsilon > 0$, if $x_0$ and $x_1$ are close enough, then $dist_1((x_0,0),(x_1,0)) < \epsilon$, and so the geodesic flow is not Anosov.
\end{proof}

\begin{theorem}
\label{t.prod}
The geodesic flow of the product metric of a product manifold of two Riemannian manifolds with Anosov geodesic flows is not partially hyperbolic.
\end{theorem}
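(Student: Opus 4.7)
The plan is to assume, for contradiction, that the product geodesic flow on $(M_1\times M_2, g^1+g^2)$ admits a partially hyperbolic splitting $TN/\langle X\rangle=E^s\oplus E^c\oplus E^u$ with rate constants $\lambda<\mu<0$, and then to derive a dimensional obstruction. The structural input is that, for a product Riemannian metric, the curvature tensor decomposes as $R=R^1\oplus R^2$ (by the identities i--iii already used in the proof of Theorem D), so along any product geodesic $\gamma(t)=(\gamma_1(at),\gamma_2(bt))$ with $a^2+b^2=1$ the Jacobi equation decouples into the two factor Jacobi equations, with time parameters $at$ and $bt$. Consequently, the subbundles of Jacobi fields tangent to the $M_1$- and $M_2$-directions are $d\phi_t$-invariant along any orbit, and an $M_i$-direction Jacobi field evolves exactly as a Jacobi field along $\gamma_i$ with time rescaled by the speed factor.

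First I would exploit pure-factor orbits. Along a geodesic $\gamma(t)=(x_0,\gamma_2(t))$ the $M_1$-component equation reduces to $\xi_1''=0$, so the $M_1$-direction subbundle carries only zero Lyapunov exponents. Since $E^s\oplus E^u$ has rates of magnitude at least $|\lambda|>0$, invariance forces the $M_1$-direction subbundle to meet $E^s\oplus E^u$ trivially, whence $\dim E^c\geq 2\dim M_1$; symmetrically $\dim E^c\geq 2\dim M_2$. Moreover, on a pure $M_i$-orbit the only directions of nonzero exponential rate come from the Anosov splitting of the $M_i$-factor, giving $\dim E^s+\dim E^u\leq 2\dim M_i-2$. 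Together with $\dim E^s+\dim E^c+\dim E^u=2(\dim M_1+\dim M_2)-2$ and constancy of the subbundle dimensions, this forces $\dim M_1=\dim M_2=:n$ with $\dim E^s=\dim E^u=n-1$ and $\dim E^c=2n$, or else gives a direct dimensional contradiction. At a pure $M_i$-orbit, $E^s$ must then equal the whole Anosov stable $E^s_i$, and the rate bound on $E^s$ yields $|\lambda|\leq |\lambda_i|$ for the Anosov rate $|\lambda_i|$ of $(M_i,g^i)$.

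Next I would analyze the one-parameter family of mixed geodesics $\gamma^{(a)}(t)=(\gamma_1(at),\gamma_2(\sqrt{1-a^2}\,t))$, $a\in(0,1)$, with $\gamma_1,\gamma_2$ unit-speed. Along $\gamma^{(a)}$ the Anosov stables $E^s_1,E^s_2$ remain $d\phi_t$-invariant of dimension $n-1$ and contract at rates $a|\lambda_1|$ and $\sqrt{1-a^2}\,|\lambda_2|$ respectively. If $|\lambda|^2\bigl(|\lambda_1|^{-2}+|\lambda_2|^{-2}\bigr)\leq 1$, then for $a$ in the nonempty interval $[\,|\lambda|/|\lambda_1|,\sqrt{1-(|\lambda|/|\lambda_2|)^2}\,]$ both $E^s_1$ and $E^s_2$ have decay rate $\geq|\lambda|$; a decomposition argument $v=v^s+v^c+v^u$ using the rate separation $|\lambda|>|\mu|$ then forces every $v\in E^s_1\oplus E^s_2$ to lie in $E^s$, so $\dim E^s\geq 2(n-1)>n-1$, a contradiction. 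Otherwise $|\lambda|^2\bigl(|\lambda_1|^{-2}+|\lambda_2|^{-2}\bigr)>1$, and for $a$ in the nonempty open interval $\bigl(\sqrt{1-(|\lambda|/|\lambda_2|)^2},\,|\lambda|/|\lambda_1|\bigr)$ neither $E^s_1$ nor $E^s_2$ has rate $\geq|\lambda|$; since every $d\phi_t$-invariant contracting subspace along this orbit splits as $L_1\oplus L_2$ with $L_i\subseteq E^s_i$, no $(n-1)$-dimensional invariant subspace attains rate $\geq|\lambda|$, again contradicting partial hyperbolicity.

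The hardest part will be the mixed-orbit analysis above. In the first subcase one must carefully invoke the rate gap $|\lambda|>|\mu|$ from the definition: a nonzero $v^u$ in the decomposition $v=v^s+v^c+v^u$ would push $\|d\phi_t v\|$ to infinity against $v$'s contracting behaviour, while a nonzero $v^c$ would force the excessively slow decay rate $|\mu|$, ultimately forcing $v^u=v^c=0$ and $v\in E^s$. In the second subcase the invariant-subspace splitting $L_1\oplus L_2$ relies on the rate separation between the two Anosov factors, which in turn requires selecting orbits $\gamma_i$ whose Lyapunov exponents realize the uniform Anosov rate $|\lambda_i|$ sharply.
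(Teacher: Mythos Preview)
Your structural setup (the Jacobi equation decouples along product geodesics, the factor directions are $d\phi_t$-invariant and Sasaki-orthogonal, rates rescale by $a$ and $\sqrt{1-a^2}$) is correct and matches the paper's. But the rate-based dichotomy that follows has a genuine gap: you treat ``the Anosov rate'' $|\lambda_i|$ as a single number, yet the two subcases require it to mean different things. In the first subcase you want \emph{every} vector of $E^s_i$ to contract at rate $\geq|\lambda|$ along the rescaled orbit (so that $E^s_1\oplus E^s_2\subset E^s$); this forces $|\lambda_i|$ to be the \emph{minimal} contraction rate on $E^s_i$. In the second subcase you want \emph{no} vector of $E^s_i$ to contract that fast; this forces $|\lambda_i|$ to be the \emph{maximal} rate. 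For a general Anosov geodesic flow the stable bundle carries a whole spread of exponents, so these two numbers differ and your dichotomy is not exhaustive: for the intermediate range of $a$ the slow part of $E^s_1$ fails the $|\lambda|$-test while the fast part passes it, and then $E^s$ can simply sit inside that fast part without producing any contradiction. Your closing remark about ``selecting orbits whose Lyapunov exponents realize $|\lambda_i|$ sharply'' would need a periodic orbit on which all stable exponents coincide, which you cannot expect outside constant curvature. A secondary gap: the assertion that on a pure $M_i$-orbit $E^s$ equals the \emph{full} Anosov stable $E^s_i$ (and hence $\dim M_1=\dim M_2$, $|\lambda|\le|\lambda_i|$) is not justified; you only get $E^s\subset E^s_i$, and nothing rules out $E^c\cap E^s_i\neq\{0\}$ without already knowing the factor's minimal Anosov rate exceeds $|\mu|$.

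The paper's proof avoids rates entirely and is much shorter. One fixes unit geodesics $\gamma_1,\gamma_2$ and varies $(\alpha,\beta)$ with $\alpha^2+\beta^2=1$ from $(1,0)$ to $(0,1)$. Because the factor directions are invariant and orthogonal, along every mixed orbit the global stable bundle splits as $E^s=E\oplus F$ with $E\subset E^s_1$ and $F\subset E^s_2$; at one endpoint $\dim E=\dim E^s$ and $\dim F=0$, at the other the roles are reversed. Since $E^s$ varies continuously in $(\alpha,\beta)$ while $\dim E$ is integer-valued, there is no continuous passage between the endpoints --- contradiction. This continuity argument is robust to arbitrary exponent spectra in the factors, which is precisely where your quantitative approach breaks down.
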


\begin{proof}

Take local coordinates for the geodesic flow of the product metric. Let $x \in M_1$, $y \in M_2$, $u \in T_xM_1$, $v \in T_yM_2$, and let $\gamma_{(x,y,u,v)}(t)$ be the geodesic with initial conditions $\gamma_{(x,y,u,v)}(0) = (x,y)$ and $\gamma'_{(x,y,u,v)}(0) = (u,v)$. Since the product metric is a sum of the two metrics, we have that $\pi_i: M_1 \times M_2 \to M_i$, $i = 1, 2$, the natural projection from the product manifold to $M_i$, is a isometric submersion. So $\gamma_{(x,y,u,v)}(t) = (\gamma_{(x,u)}(t),\gamma_{(y,v)}(t))$. 

Let us construct an orthonormal basis of parallel vector fields for $\gamma_{(x,y,u,v)}(t)$. 
Suppose $g^1_x(u,u) = 1$ and $g^2_y(v,v) = 1$. So, to have $(x,y,u,v)$ in the unitary tangent bundle of $M_1 \times M_2$ we take $(x,y,\alpha u,\beta v)$, and
$$g_{(x,y)}((\alpha u, \beta v),(\alpha u, \beta v)) = \alpha^2 g^1_x(u,u) + \beta^2 g^2_y(v,v) = \alpha^2 + \beta^2 = 1.$$
Then $$\gamma_{(x,y,\alpha u,\beta v)}(t) = (\gamma_{(x,\alpha u)}(t),\gamma_{(y,\beta v)}(t)), \gamma'_{(x,y,\alpha u,\beta v)}(t) = (\alpha \gamma'_{(x,u)}(t),\beta \gamma'_{(y,v)}(t)).$$
\noindent Let $E_i$, $i=2,\ldots,dim(M_1)$, be an orthogonal frame of parallel vector fields along the geodesic $\gamma_{(x,u)}$. Let $F_j$, $j=2,\ldots,dim(M_2)$, be an orthogonal frame of parallel vector fields along the geodesic $\gamma_{(y,v)}$. 

Notice that along the geodesic $\gamma_{(x,y,\alpha u,\beta v)}$, since its components are $\gamma_{(x,\alpha u)}$ and $\gamma_{(y,\beta v)}$, the following holds:
$$g^1_{\gamma_{(x,\alpha u)}(t)}(\gamma'_{(x,\alpha u)}(t),\gamma'_{(x,\alpha u)}(t)) = \alpha^2, g^2_{\gamma_{(y,\beta v)}(t)}(\gamma'_{(y,\beta v)}(t),\gamma'_{(y,\beta v)}(t)) = \beta^2,$$ 
\noindent so the proportion $(\alpha,\beta)$ is preserved along the geodesic.

So $\{(\alpha \gamma'_{(x,u)}(t),\beta \gamma'_{(y,v)}(t)),(\beta \gamma'_{(x,u)}(t),- \alpha \gamma'_{(y,v)}(t)),(E_i(t),0),(0,F_j(t))\}_{i,j}$ is an ortho\-normal frame of parallel vector fields along the geodesic $\gamma_{(x,y,\alpha u,\beta v)}(t)$.

The fact that the second fundamental form of the submanifolds $\{p\} \times M_2$ and $M_1 \times \{q\}$ is zero, together with Gauss and Codazzi equations, imply that:
$$R((u_1,0),(u_2,0),(u_3,0),(u_4,0)) = R^1(u_1,u_2,u_3,u_4),$$
$$R((0,v_1),(0,v_2),(0,v_3),(0,v_4)) = R^2(v_1,v_2,v_3,v_4),$$
$$R((u_1,0),(u_2,0),(u_3,0),(0,v_1)) = 0,$$
$$R((0,v_1),(0,v_2),(0,v_3),(u_1,0)) = 0.$$
\noindent Also the fact that the curvature is zero for planes generated by one vector tangent to $M_1$ and another tangent to $M_2$ implies:
$$R((u_1,0),(0,v_1),(u_2,0),(0,v_2)) = 0.$$
\noindent All these equations imply that along the geodesic $\gamma_{(x,y,\alpha u,\beta v)}(t)$:
$$R(\gamma'_{(x,y,\alpha u,\beta v)},(E_i,0),\gamma'_{(x,y,\alpha u,\beta v)},(E_k,0)) = \alpha^2 R^1(\gamma'_{(x,u)},E_i,\gamma'_{(x,u)},E_k),$$
$$R(\gamma'_{(x,y,\alpha u,\beta v)},(0,F_j),\gamma'_{(x,y,\alpha u,\beta v)},(0,F_l)) = \beta^2 R^2(\gamma'_{(y,v)},F_j,\gamma'_{(y,v)},F_l),$$
$$R(\gamma'_{(x,y,\alpha u,\beta v)},(E_i,0),\gamma'_{(x,y,\alpha u,\beta v)},(0,F_j)) = 0.$$
\noindent Now, we are going to write a system of Jacobi fields. If we have a Jacobi field $\zeta(t) = \sum_{i=2}^n f_i(t) U_i(t)$, then $\zeta''(t) = \sum_{i=2}^n f''_i(t) U_i(t)$ and 
$$0 = \sum_{j=2}^n (f''_j + \sum_{i=2}^n f_i R(\gamma',U_i,\gamma',U_j)) U_j.$$
\noindent So, the Jacobi equation can be written as:
$$\begin{bmatrix} f \\ f' \end{bmatrix}' = \begin{bmatrix} 0 & I \\ - K & 0 \end{bmatrix} \begin{bmatrix} f \\ f' \end{bmatrix}$$
\noindent where $K_{ij} = R(\gamma',U_i,\gamma',U_j)$.

In the case of the product metric we have:
$$\begin{bmatrix} f \\ f' \end{bmatrix}' = \begin{bmatrix} 0 & 0 & I & 0 \\ 0 & 0 & 0 & I \\ - \alpha^2 K^1 & 0 & 0 & 0 \\ 0 & - \beta^2 K^2 & 0 & 0 \end{bmatrix} \begin{bmatrix} f \\ f' \end{bmatrix}.$$
\noindent With a change in the order of the basis of parallel vector fields we have:
$$F' = \begin{bmatrix} 0 & I & 0 & 0 \\ - \alpha^2 K^1 & 0 & 0 & 0 \\ 0 & 0 & 0 & I \\ 0 & 0 & - \beta^2 K^2 & 0 \end{bmatrix} F.$$
\noindent So the systems decouples and the solutions are given immediately by the solutions for $M_1$ and $M_2$.

Now suppose the geodesic flow of the product metric is partially hyperbolic with splitting $E^s \oplus E^c \oplus E^u$, $dim E^s = p$, $dim E^u = q$. So the geodesic flow of each metric $g^1$ and $g^2$ is partially hyperbolic and each geodesic flow inherits a partially hyperbolic splitting:
$$E_1^s \oplus E_1^c \oplus E_1^u,$$
\noindent along geodesics in $M_1 \times \{y\}$ ($\beta = 0$), such that $E_1^s \oplus E_1^u \subset T_xM_1 \oplus \{0\} \subset T_xM_1 \oplus T_yM_2$, and
$$E_2^s \oplus E_2^c \oplus E_2^u,$$
\noindent along geodesics in $\{x\} \times M_2$ ($\alpha = 0$), such that $E_2^s \oplus E_2^u \subset \{0\} \oplus T_yM_2 \subset T_xM_1 \oplus T_yM_2$.

For geodesics of the product metric which have $\alpha \neq 0 \neq \beta$, we get a splitting into five invariant subbundles $E_1^s \oplus E_2^s \oplus E^c \oplus E_1^u \oplus E^u_2$, without the domination, since $\alpha$ and $\beta$ multiply the Lyapunov exponents of each subbundle. Since we already have an splitting, $E^s$ and $E^u$ are necessarily one of a combination of subbundles of $E^s_1$ and $E^s_2$, $E^u_1$ and $E^u_2$, respectively:
$$E^s \in \{ E \oplus F : E \subset E^s_1, F \subset E^s_2, dim E + dim F = p\},$$
$$E^u \in \{ E \oplus F : E \subset E^u_1, F \subset E^u_2, dim E + dim F = q\}.$$
\noindent So there is no way to go from the case $\alpha = 0$ to $\beta = 0$ without breaking the continuity of the splitting, because one cannot go from the case dim $E = 0$ for $\beta = 0$, to dim $F = 0$ for $\alpha = 0$ in a continuous way. 

\end{proof}

\section{Anosov geodesic flow with many invariant subbundles}
\label{s.orig}

In this section, we introduce the metric which we are going to deform to produce the example of a partially hyperbolic and non-Anosov geodesic flow.

The candidate for the deformation is a compact locally symmetric space which is a quotient of the symmetric space of nonconstant negative curvature $M := G / K$ by a cocompact lattice  $\Gamma$, where $G$ is a Lie group that acts transitively on $M$ and $K$ is a compact subgroup of $G$ \cite{Bo}. 

Cartan classified the symmetric spaces of negative curvature (see \cite{He}, \cite{Hel}). They are:

\begin{itemize}
\item[i.] the hyperbolic space $\mathbb{R}H^n$ of constant curvature $-c^2$, which is the canonical space form of negative constant curvature;
\item[ii.] the hyperbolic space $\mathbb{C}H^n$ of curvature $-4c^2 \leq K \leq - c^2$, which is the canonical K\"ahler hyperbolic space of constant negative holomorphic curvature $- 4 c^2$ \cite{G};
\item[iii.] the hyperbolic space $\mathbb{H}H^n$ of curvature $-4c^2 \leq K \leq - c^2$, which is the canonical quaternionic K\"ahler symmetric space of negative curvature \cite{Be}, \cite{Wo};
\item[iv.] the hyperbolic space $Ca H^2$ of curvature $-4c^2 \leq K \leq - c^2$, which is the canonical hyperbolic symmetric space of the octonions of constant negative curvature. 
\end{itemize}

Their geodesic flows are all Anosov, but the geodesic flow of the first one does not have more than the two invariant subbundles, the stable and the unstable, which cannot be decomposed in other subbundles. The others have more invariant subbundles, as in the first item of the strategy written in section \ref{s.intro}. So, the metrics which are the candidates to produce a partially hyperbolic geodesic flow which is not Anosov are the metrics in items [ii.], [iii.] and [iv.]. Through the article we are going to consider $c = \frac{1}{2}$.

For these type of metrics we need the following properties to hold:

\begin{itemize}
\item[i.] For all $v \in T_xM$, the subspace $\{ w \in T_xM : K(v,w) = -1 \}$ is parallel along $\gamma_v$ in the sense  that the derivative of the projection to this subspace of $T_xM$ along geodesics is zero; 
\item[ii.] For closed geodesics $\gamma: [0,T] \to M$, $(\gamma(0),\gamma'(0)) = (\gamma(T),\gamma'(T))$, the parallel translation from $\gamma(0)$ to $\gamma(T)$ along $\gamma$ of these subspaces $\{ w \in T_xM : K(v,w) = -1 \}$ and $\{ w \in T_xM : K(v,w) = -\frac{1}{4} \}$, where $v = \gamma'(0)$, preserves orientation.
\end{itemize}

The examples that satisfy the properties above are:

\begin{itemize}
\item[i.] compact K\"ahler manifolds of negative holomorphic curvature $-1$ (see \cite{G}), \item[ii.] compact locally symmetric quaternionic K\"ahler manifolds of negative curvature (see \cite{Be}). 
\end{itemize}

In the next subsection we describe a series of properties of the hyperbolic subbundles of the Anosov geodesic flows for the two cases listed above. More precisely, in subsection \ref{s.sub} we describe the strong and weak stable and unstable subbundles, in subsection \ref{s. cone var for anosov} we study the cone variation (as defined in subsection \ref{s. cone variation def}) for the hyperbolic subbundles and in subsection \ref{s.orient} we study the orientability of those subbundles.

\subsection{Subspaces of $S(UM)$ and $UM$}
\label{s.sub}

Since the candidate has nonconstant negative curvature, then its sectional curvature, up to multiplication of the metric by a constant, has planes of sectional curvature $-1$ and planes of sectional curvature $-\frac{1}{4}$. Actually, every vector $v \in TM$ is in a plane with curvature $-1$ and in another with curvature $-\frac{1}{4}$. 

We define 
\begin{eqnarray}
\label{d.AB} 
A(x,v) & := & \{ w \in T_xM : K(v,w) = -1 \}, \\ B(x,v) & := & \{ w \in T_xM : K(v,w) = -\frac{1}{4} \}.
\end{eqnarray} 

If we restrict the derivative of the geodesic flow to the subbundle $S(UM) = ker \alpha \to UM$, where $\alpha$ is the contact form on $UM$, then $S(x,v) = \widehat{H}(x,v) \oplus \widehat{V}(x,v)$, and $\widehat{H}(x,v)$ and $\widehat{V}(x,v)$ are identified with $\{v\}^{\bot} = A(x,v) \oplus B(x,v) \subset T_xM$, $(x,v) \in UM$. The subbundles $A$ and $B$ are invariant by parallel translation along  geodesics. 

\begin{lemma}
The geodesic flow of the symmetric spaces of nonconstant negative curvature induces a hyperbolic splitting of the contact structure defined on $UM$: $S(UM)/<X> = E^{ss} \oplus E^s \oplus E^u \oplus E^{uu}$.
\end{lemma}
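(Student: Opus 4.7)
The plan is to exhibit the four invariant subbundles explicitly as spaces of Jacobi fields with prescribed exponential rates, exploiting the parallel-invariance of $A$ and $B$ along geodesics. Along any geodesic $\gamma_\theta$ with $\theta = (x,v) \in UM$, the Jacobi equation $\zeta'' + R(\gamma_\theta',\zeta)\gamma_\theta' = 0$ for $\zeta \perp \gamma_\theta'$ decouples once we use the decomposition $\gamma_\theta'(t)^\perp = A(\gamma_\theta(t),\gamma_\theta'(t)) \oplus B(\gamma_\theta(t),\gamma_\theta'(t))$: by property (i) of the previous subsection this splitting is parallel along $\gamma_\theta$, and by the very definition of $A$ and $B$ the Jacobi operator $w \mapsto R(\gamma'_\theta, w)\gamma'_\theta$ acts as $-\mathrm{Id}$ on $A$ and as $-\tfrac14\mathrm{Id}$ on $B$. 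Hence a Jacobi field $\zeta$ with $\zeta(t) \in A$ for all $t$ satisfies $\zeta'' = \zeta$ (rates $\pm 1$), while one with $\zeta(t) \in B$ for all $t$ satisfies $\zeta'' = \tfrac14\zeta$ (rates $\pm \tfrac12$).

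Under the identification $S(\theta) \cong \{v\}^\perp \oplus \{v\}^\perp$ coming from the splitting $H \oplus V$, I would set
$$E^{ss}(\theta) := \{(w,-w) : w \in A(x,v)\}, \quad E^{s}(\theta) := \{(w,-\tfrac12 w) : w \in B(x,v)\},$$
$$E^{u}(\theta) := \{(w,+\tfrac12 w) : w \in B(x,v)\}, \quad E^{uu}(\theta) := \{(w,+w) : w \in A(x,v)\}.$$
Any $\xi \in E^{ss}(\theta)$ generates a Jacobi field $\zeta_\xi(t) = e^{-t}P_t(w)$, where $P_t$ denotes parallel transport along $\gamma_\theta$; because $P_t$ preserves $A$ by property (i), one immediately checks that $d_\theta\phi_t(\xi) = (\zeta_\xi(t),\zeta'_\xi(t)) \in E^{ss}(\phi_t\theta)$, and the Sasaki norm scales as $\sqrt{2}\,e^{-t}\|w\|$. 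The analogous computations for $E^s$, $E^u$, $E^{uu}$ give Sasaki-norm growth rates $e^{-t/2}$, $e^{t/2}$, $e^t$ respectively. Since these four rates are distinct and separated from $0$, the splitting is invariant, hyperbolic, and dominated with four pieces.

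To finish, a dimension count yields $\dim E^{ss} + \dim E^s + \dim E^u + \dim E^{uu} = 2\dim A + 2\dim B = 2(\dim M - 1) = \dim S(\theta)$, so the four subbundles exhaust the contact bundle, and rate separation forces the sum to be direct. Continuity (in fact smoothness) of $\theta \mapsto E^{*}(\theta)$ is inherited from the smoothness of $\theta \mapsto (A(\theta),B(\theta))$, which follows from the symmetric-space structure. Passing to the quotient $T(UM)/\langle X\rangle$ is automatic since $S$ is transverse to the Reeb direction $X$.

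I expect the only genuinely nontrivial input to be the parallel-invariance of $A$ and $B$ along geodesics: that is the structural fact peculiar to the symmetric spaces of items (ii)--(iv) above, and it is what distinguishes them from a generic negatively-curved metric. Once this is granted, the Jacobi equation reduces to the two scalar equations $\zeta'' - \zeta = 0$ and $\zeta'' - \tfrac14\zeta = 0$ on parallel sub-frames, and the hyperbolic four-splitting falls out by inspection.
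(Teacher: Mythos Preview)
Your proof is correct and follows essentially the same approach as the paper: you define the four subbundles as graphs $\{(w,\pm w):w\in A\}$ and $\{(w,\pm\tfrac12 w):w\in B\}$ (the paper writes these as $P^u_C$, $P^s_C$ with $\alpha_A=1$, $\alpha_B=\tfrac12$) and read off invariance and the rates $e^{\pm t}$, $e^{\pm t/2}$ from the decoupled Jacobi equations. Your version is in fact more thorough than the paper's terse argument, supplying the explicit Jacobi fields $e^{\pm t}P_t(w)$, the dimension count, and the continuity remark, none of which the paper spells out.
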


\begin{proof}
We can define the invariant subbundles $P^u_C(v), P^s_C(v) \subset T_{(x,v)}UM$, $C = A, B$ such that
$$P^u_C(v) = \{ (w,\alpha_C w) \in S(x,v) : w \in C(x,v) \},$$ $$P^s_C(v) = \{ (w,-\alpha_C w) \in S(x,v) : w \in C(x,v) \}.$$  
\noindent where $\alpha_A = 1$ and $\alpha_B = \frac{1}{2}$.

That invariant subbundles are exactly the subbundles of the decomposition in the first item of the strategy stated in the introduction: 
$$E^{uu}(x,v) = P^u_A(x,v),$$ 
$$E^{ss}(x,v) = P^s_A(x,v),$$ 
$$E^s(x,v) = P^s_B(x,v),$$ 
$$E^u(x,v) = P^u_B(x,v).$$
Above subbundles are invariants  and the splitting is dominated: Jacobi fields in $E^{uu}$ and $E^{ss}$ contract for the past and the future, respectively, at rate $e^{-t}$ and Jacobi fields in $E^u$ and $E^s$ contract for the past and the future, respectively, at rate $e^{-t/2}$.
\end{proof}

\subsubsection{Angle cone variation for the Anosov flow with many subbundles}
\label{s. cone var for anosov}

Let us calculate the proper invariance of the cones in the case of the geodesic flow of the compact locally symmetric Riemannian manifold of nonconstant negative sectional curvature. 

We use the following family of trajectories for the system:
$$q(t,u) = \pi \circ \phi_t(z(u)),$$
\noindent $|u| < \epsilon$. We consider the geodesic $v(t) := \phi_t(z(0)) = \gamma'_{z(0)}(t)$.  

\noindent The Jacobi system along the geodesic $\gamma_{z(0)}$ is given by $(\xi(t),\eta(t))$, where
$$\xi(t) = \frac{dq}{du}(t,0) ,\eta(t) = \frac{Dv}{du}(t,0) = \frac{D}{du} \frac{dq}{dt}(t,0).$$
\noindent So the following equations hold:
\begin{equation}
\label{eq:jacobi}\frac{D\xi}{dt} = \eta, \frac{D\eta}{dt} = - R(v,\xi)v.
\end{equation}
\noindent The quantity (\ref{eq.cone0}), which in this case is
$$\Theta_A^u(\xi,\eta) := \frac{g(Pr_A(\xi+\eta),Pr_A(\xi+\eta))}{g(\xi,\xi) + g(\eta,\eta)},$$
\noindent indicates twice the square of the cosine of the angle between the vector $(\xi,\eta) \in T_{(x,v)}UM$ and its projection to $P^u_A(x,v)$. The cone in this case is $$C(v,P^u_A(x,v),c) = \{(\xi,\eta) \in T_{(x,v)}UM : \Theta_A^u(\xi,\eta) = \frac{\widehat{g}(Pr_{P^u_A(v)} (\xi,\eta), Pr_{P^u_A(v)} (\xi,\eta))}{\widehat{g}((\xi,\eta),(\xi,\eta))} \geq c\},$$ where $\widehat{g}$ is the Sasaki metric and $c = 2\cos^2 \delta$, where $\delta$ is the angle between the vectors in the boundary of the cone and the subspace $P^u_A(v)$. So, as explained in subsection \ref{s. cone variation def},  to prove that the cone fields are properly invariant  is equivalent  to prove that the cosine of this angle increases under the action of the derivative of the geodesic flow, for vector in the boundary of the cone fields, 
$$\partial C(v,P^u_A(x,v),c) = \{ (\xi,\eta) \in T_{(x,v)}UM : \Theta_A^u(\xi,\eta) = c \in (1,2) \}.$$
Remember that for any Riemannian manifold $(M,g)$ and any $u(t),v(t) \in T_{\gamma(t)}M$ vector fields along a geodesic $\gamma: \mathbb{R} \to M$ on $M$ 
$$\frac{d}{dt} g(u,v) = g\left(\frac{Du}{dt},v\right) + g\left(u,\frac{Dv}{dt}\right).$$
If $(M,g)$ is locally symmetric, and if $\xi(t) \in T_{\gamma(t)}M$ is a vector field along a geodesic $\gamma(t)$ on $M$, then
$$\frac{D}{dt} Pr_A \xi = Pr_A \frac{D}{dt} \xi,$$
\noindent i.e. the subspace $A(\gamma'(t))$ is parallel along the geodesic $\gamma: \mathbb{R} \to M$ of $(M,g)$.
Let us call, to simplify the equations, $$\xi_A := Pr_A \xi, \xi_B := Pr_B \xi,$$ $$\xi'_A = Pr_A \frac{D}{dt} \xi = (\xi_A)', \xi'_B = Pr_B \frac{D}{dt} \xi = (\xi_B)',$$ 
\noindent and remember that $\xi = \xi_A + \xi_B$. Then, for 
$$\frac{g(\xi_A+\eta_A,\xi_A+\eta_A)}{g(\xi,\xi) + g(\eta,\eta)} = \Theta_A^u(\xi,\eta) = c \in (1,2),$$
\noindent its derivative along a geodesic $\gamma: \mathbb{R} \to M$ is: 
\begin{eqnarray*}
\frac{d}{dt} \Theta_A^u(\xi,\eta) & = & 2 \frac{g(\xi_A+\eta_A,\xi'_A + \eta'_A + \xi_{A'} + \eta_{A'})}{g(\xi,\xi) + g(\eta,\eta)} \\ & - & 2 \frac{g(\xi_A+\eta_A,\xi_A+\eta_A)}{(g(\xi,\xi) + g(\eta,\eta))^2} (g(\xi,\xi') + g(\eta,\eta')). 
\end{eqnarray*}
\noindent Since the subspaces $A(\gamma'(t))$ and $B(\gamma'(t))$ are parallel along the geodesic $\gamma: \mathbb{R} \to M$ and equation \ref{eq:jacobi} holds, then
\begin{eqnarray*}
\xi_{A'} = 0 & , & \eta_{A'} = 0 \\
\xi' = \eta & , & \eta' = - R(v,\xi)v \\
\xi'_A = \eta_A & , & \eta'_A = - R(v,\xi)v_A,  
\end{eqnarray*}
\noindent imply
\begin{eqnarray*} \frac{d}{dt} \Theta_A^u(\xi,\eta) & = & 2 \frac{g(\xi_A+\eta_A,\eta_A - (R(v,\xi)v)_A)}{g(\xi,\xi) + g(\eta,\eta)} \\ & - & 2 \frac{g(\xi_A+\eta_A,\xi_A+\eta_A)}{(g(\xi,\xi) + g(\eta,\eta))^2} (g(\xi,\eta) - R(v,\xi,v,\eta)). \end{eqnarray*}
\noindent But for the locally symmetric metric of negative curvature, the curvature is:
$$R(v,\xi)v = - \frac{1}{4} \xi_B - \xi_A.$$
\noindent So, we have:
\begin{eqnarray*}\frac{d}{dt} \Theta_A^u(\xi,\eta) & = & 2 \frac{g(\xi_A + \eta_A,\xi_A+\eta_A)}{(g(\xi,\xi) + g(\eta,\eta))^2} (g(\xi,\xi) + g(\eta,\eta) \\ & - & g(\xi,\eta) - g(\xi_A,\eta_A) - \frac{1}{4} g(\xi_B,\eta_B )).
\end{eqnarray*}

We need to show that it is positive at least in the boundary of the cone $C(v,P^u_A(v),c)$. In fact it will be positive for any initial $c \in (1,2)$:

\begin{eqnarray*} \frac{d}{dt} \Theta_A^u(\xi,\eta) & = & \frac{2g(\xi_A + \eta_A,\xi_A+\eta_A)}{(g(\xi,\xi) + g(\eta,\eta))^2} (g(\xi_A,\xi_A) + g(\xi_B,\xi_B) + g(\eta_A,\eta_A) \\ & & + g(\eta_B,\eta_B) - g(\xi_A,\eta_A) - g(\xi_B,\eta_B) - g(\xi_A,\eta_A) - \frac{1}{4} g(\xi_B,\eta_B)) \\ & = & \frac{2g(\xi_A + \eta_A,\xi_A+\eta_A)}{(g(\xi,\xi) + g(\eta,\eta))^2} (g(\xi_A,\xi_A) - 2 g(\xi_A,\eta_A) \\ & & + g(\eta_A,\eta_A) + g(\xi_B,\xi_B) - \frac{5}{4} g(\xi_B,\eta_B) + g(\eta_B,\eta_B)) \\ & = & \frac{2g(\xi_A + \eta_A,\xi_A+\eta_A)}{(g(\xi,\xi) + g(\eta,\eta))^2}(g(\xi_A - \eta_A,\xi_A - \eta_A) \\ & & + g(\xi_B - \frac{5}{8} \eta_B,\xi_B - \frac{5}{8} \eta_B) + \frac{39}{64}g(\eta_B,\eta_B)). \end{eqnarray*}
\noindent Since the derivative is the same if $(\xi,\eta)$ is multiplied by a scalar, we consider $(\xi,\eta)$ such that $g(\xi,\xi) + g(\eta,\eta) = 1$, and such that they are in the boundary of the cones  of size $c$. This is a compact set and the derivative for this values of $(\xi,\eta)$ is far away from zero: $$\frac{d}{dt} \Theta_A^u(\xi,\eta) = g(\xi_A - \eta_A,\xi_A - \eta_A) + g(\xi_B - \frac{5}{8} \eta_B,\xi_B - \frac{5}{8} \eta_B) + \frac{39}{64}g(\eta_B,\eta_B).$$ This means that the cones are properly invariant under the action of the derivative of the geodesic flow.

To get the exponential growth, we need lemma \ref{l.contreras}, but in this case we are able to calculate it explicitly:
\begin{eqnarray*} \frac{d}{dt} g(\xi_A+\eta_A,\xi_A+\eta_A) & = & 2 g(\xi_A+\eta_A,\eta_A - (R(v,\xi)v)_A)
\\ & = & 2 g(\xi_A+\eta_A,\xi_A+\eta_A). \end{eqnarray*}
\noindent This implies that the vectors inside the cone grow at the rate of $e^t$.

\subsubsection{Orientability of $A$ and $B$}
\label{s.orient}

In following section, we use  normal coordinates along closed geodesics. To define them properly in a suitable way for our needs, we  need that the hyperbolic subbundles are  orientable. We discuss this issue in the present section.  

Recall that a K\"ahler manifold is a triple $(M,J,\omega)$, such that $J: TM \to TM$ is a integrable complex map with $J^2 = -Id_{TM}$, and $\omega$ is a $J$-compatible symplectic form. In the case of negative holomorphic curvature $-1$, $A(x,v) = \mathbb{R} \cdot Jv$ and $B(x,v)$ has a basis of the form $(e_1,Je_1,\ldots,e_k,Je_k)$. If $\gamma$ is a closed geodesic then the parallel transport along $\gamma$ sends $Jv$ to $Jv$ and sends $(e_1,Je_1,\ldots,e_k,Je_k)$ to $(\tilde{e}_1,J\tilde{e}_1,\ldots,\tilde{e}_k,J\tilde{e}_k)$, which have the same orientation.

In the K\"ahler quaternionic case, instead of one map $J$, there are three maps $I$, $J$, $K$, such that $I^2 = J^2 = K^2 =  -Id_{TM}$, $IJ = - JI$, $K = IJ$ \cite{Be}, \cite{Wo}. In this case, $A(x,v)$ has as its basis $(Iv,Jv,Kv)$. The three maps are not parallel, but the orthogonal projection to $A$ is parallel. Also, $Q(v) = Iv \wedge Jv \wedge Kv$ is parallel, so along closed geodesics the orientation of $A(x,v)$ is preserved \cite{Gr}. For the same reason, $Q$ being parallel, $B(x,v)$ has its orientation preserved along closed geodesics.

\section{The partially hyperbolic non-Anosov example}
\label{s.nonanosov}

In the first subsection we give a more detailed strategy for the deformation of the metric introduced in the previous section. 

In the second subsection we give some definitions and we introduce the deformation of the original metric whose geodesic flow is partially hyperbolic and non-Anosov.

In the subsections \ref{s.para1}, \ref{s.para2}, \ref{s.transv} we show that the new geodesic flow preserves a strong stable and a strong unstable cone fields. We first show that along the closed geodesic $\gamma$ the strong stable and strong unstable cones are properly invariant under the action of the derivative of the deformed geodesic flow. Then, we show that for geodesics which are close to $\gamma'=(v_0,0,0,\ldots,0)$ the strong stable and strong unstable cones are properly invariant too (subsections \ref{s.para1} and \ref{s.para2}). Then we show that for geodesics that cross the neighborhood of the deformation of the compact locally symmetric metric the strong stable and strong unstable cones are not properly invariant, but we manage to control the lack of this property in such a way that, after crossing the neighborhood, and inside the region where the metric remains the same, proper invariance is obtained (subsection \ref{s.transv}). Then we prove that there is expansion for the vectors in the strong unstable cones, and contraction for the vectors in the strong stable cones (subsection \ref{s.expo}).

In subsection \ref{s.conc} we state the main theorem and some of its corollaries. 

\begin{remark}
\label{r.reverse}
We only need to show the strong unstable cone is properly invariant, because this guarantees that we have one unstable subbundle $E^u$ invariant under the flow. For the same reasons there is a properly invariant unstable subbundle for the inverse of the flow, which is the stable subbundle, since geodesic flows are reversible flows.
\end{remark}

\subsection{The strategy to construct the example}
\label{s.strateg}

First  we add more details to the strategy of the proof of theorem A  described in the introduction:

\begin{itemize}
\item[1.] We chose a metric whose geodesic flow is Anosov and whose hyperbolic invariant splitting is of the form $T(UM) = E^{ss} \oplus E^s \oplus \left\langle X \right\rangle \oplus E^u \oplus E^{uu}$ (recall section \ref{s.orig});
\item[2.] We take a closed geodesic $\gamma$ without self-intersections (section \ref{s.prop});
\item[3.] We change the metric in a tubular neighborhood of $\gamma$ in $M$, such that along $\gamma$ the strong subbundles ($E^{ss}$ and $E^{uu}$) remain invariant and the weak subbundles disappear, becoming a central subbundle with no hyperbolic behavior (section \ref{s.geode}):
\item[3.1.] to obtain the non-hyperbolicity we change the metric in such a way that the directions of small curvature become directions of zero curvature (section \ref{s.geode} and \ref{s.geod nh});
\item[3.2.] to obtain that the strong subbundles remain the same along $\gamma$ we deform the metric along  it in such a way that the directions of larger curvature ($E^{ss}$ and $E^{uu}$) remain (sections \ref{s.geode} and \ref{s.phgamma});
\item[4.] Observe that outside the tubular neighborhood of $\gamma$, the dynamics remains hyperbolic;
\item[5.] We show that for the geodesics that intersect the tubular neighborhood the cones associated to the extremal subbundles ($E^{ss}$ and $E^{uu}$)  are preserved (sections \ref{s.para1}, \ref{s.para2}, \ref{s.transv}. \ref{s.phcone}):
\item[5.1.]  for  geodesics close to  $\gamma_0$ ('parallel' region), we verify that the cones associated with the extremal subbundles are preserved (sections \ref{s.para1}, \ref{s.para2});
\item[5.2.] for geodesics 'transversal' to  $\gamma_0$ ('transversal' region) we control the angle cone variation for the cones associated to the extremal subbundles with its own axis under the action of the derivative of the geodesic flow (section \ref{s.transv});
\item[5.3.] then, we prove that for any geodesic the time spent in the 'transversal' region is  small as we need in comparison to the time spent outside it (section \ref{s.transv});
\item[6.] We finish proving that for vectors in the unstable cones there is expansion, and for vectors inside the stable cones there is contraction, under the action of the derivative of the new geodesic flow (section \ref{s.expo}).
\end{itemize}

\subsection{The new metric $g^*$ and its properties}
\label{s.prop}

Let us call $(M^n,g)$ a compact locally symmetric space of nonconstant negative curvature of dimension $n$, introduced in section \ref{s.orig}.

Let us fix a closed prime geodesic $\gamma: [0,T] \to M^{n}$, with $\gamma(0) = \gamma(T)$ and $\gamma'(0) = \gamma'(T)$, without self-intersections. This is the closed geodesic which we use to construct the tubular neighborhood where we change the metric $g$. There is always a geodesic with these properties in a compact Riemannian manifold \cite{Kl}.

\begin{definition}
Let us define a  tubular neighborhood of the geodesic $\gamma$, constructed as follows:

We introduce normal coordinates along this geodesic. Take an orthonormal basis of vector fields $\{ e_0(t):=\gamma'(t),e_1(t),\ldots,e_{n-1}(t)\}$ in $T_{\gamma(t)}M$, such that $\{e_1(t),\ldots,e_r(t)\}$ is a basis for $A(\gamma(t),\gamma'(t))$, and $\{e_{r+1}(t),\ldots,e_{n-1}(t)\}$ is a basis for $B(\gamma(t),\gamma'(t))$. This is possible because the parallel transport preserves orientation and M is orientable. $\Psi: [0,T] \times (- \epsilon_0,\epsilon_0)^{2n-1} \to M : (t,x) \to \exp_{\gamma(t)} ( x_1 e_1(t) + x_2 e_2(t) + \ldots + x_{n-1} e_{n-1}(t))$ with $\epsilon_0$ less than the injectivity radius, so $\Psi|_U$ is a diffeomorphism, with $U = [0,T] \times (- \epsilon_0,\epsilon_0)^{n-1}$. We define $U(\epsilon) := [0,T] \times (- \epsilon,\epsilon)^{n-1}$. Now, the tubular neighborhood is noted and defined as  $  {\mathcal B}(\gamma,\epsilon)  = \Psi(U(\epsilon)).$
\end{definition}

\begin{definition}
The set of vectors $\{ (x,v) \in UM : x \in  {\mathcal B}(\gamma,\epsilon) , |v_i| < \theta, i=1,\ldots,n-1 \}$ is called the set of $\theta$-parallel vectors to $\gamma$, the set $\{ (x,v) \in UM : x \in  {\mathcal B}(\gamma,\epsilon) , |v_i| \geq \theta, \textrm{ for some } i=1,\ldots,n-1 \}$ is called the set of $\theta$-transversal vectors to $\gamma$. If $(x,v) \in UM$ belongs to the set of $\theta$-parallel vectors for all $\theta$, then we call it a parallel vector to $\gamma$. Notice that $\{ (x,v) \textrm{ is } \theta \textrm{-parallel to } \gamma \} \cup \{ (x,v) \textrm{ is } \theta \textrm{-transversal to } \gamma \} =  {\mathcal B}(\gamma,\epsilon) $.
\end{definition}

Let $g_{ij}(t,x)$ denote the components of the metric in this tubular neighborhood of $\gamma$ where $\Psi$ is defined. We define a new Riemannian metric $g^*$ as:
$$g^*_{00}(t,x) := g_{00}(t,x) + \alpha(t,x),$$
$$\alpha(t,x) := \sum_{i,j=1}^{n-1} \Phi_{ij}(t,x) x_i x_j,$$
$$g^*_{ij}(t,x) := g_{ij}(t,x), (i,j) \neq (0,0),$$
\noindent with $\Phi_{ij}: [0,T] \times (- \epsilon_0,\epsilon_0)^{n-1} \to \mathbb{R}$, where each $\Phi_{ij}$ is a bump function. This kind of deformation allows us to change the curvature (change the second derivative), as $\gamma$ and the parallel transport along $\gamma$ (the metric up to its first derivative) remain the same. This becomes clear if we look to the formulas of the metric, the parallel transport and the curvature with respect to a coordinate system.

For this new metric $g^*$, the coordinates along $\gamma$ are:
$$g^{*ij}(t,0) = g^{ij}(t,0), 0 \leq i,j \leq n-1,$$
$$g^*_{ij}(t,0) = g_{ij}(t,0), 0 \leq i,j \leq n-1,$$
$$\partial_k g^{*ij}(t,0) = \partial_k g^{ij}(t,0),0 \leq i,j,k \leq n-1,$$
$$\partial_k g^*_{ij}(t,0) = \partial_k g_{ij}(t,0), 0 \leq i,j,k \leq n-1.$$
\noindent These equalities imply that the closed geodesic $\gamma$ still is a closed geodesic for $g^*$. We are going to use the following deformation:
$$\alpha(t,x) =  \sum_{k=r+1}^{n-1}x^2_{k} \Phi_k(t,x),$$
The first property we need for the function $\alpha: U \to \mathbb{R}$ is that $\Phi_k(t,0) = - \frac{1}{4}$. The $\Phi_k$ are going to be products of bump functions define on a tubular neighborhood of $\gamma$ of radius $\epsilon < \epsilon_0$. We need to change $\epsilon$ along the proof, so we can say that this functions $\Phi_k$ are going to be $\epsilon$-parameter families of functions. For some $\epsilon$ small enough the new metric $g^*$ is going to be partially hyperbolic. 

\begin{remark}
Notice that the functions $\alpha$ and $\Phi_k$ are defined in a tubular neighborhood of the closed geodesic $\gamma$. Since $t$ is the coordinate of the closed geodesic $\gamma$ and we want to preserve it after perturbations, we do not use bump functions with $t$ as a parameter. So we define $\alpha$ as a linear combination of products of bump functions $\Phi_k$ which do not depend on $t$. 
\end{remark}

Let us construct some functions that will help us define the $\alpha$ function we need.

\begin{definition} 
Let us define for a positive real number $h$ and a non-negative real number $\tau$ the function $\varphi_{h,\tau}: \mathbb{R}_{\geq 0} \to \mathbb{R}$, continuous and piecewise-$C^1$ with support in $[0,1]$:

\begin{itemize}
\item[.] $\varphi_{h,\tau}(0) = \varphi_{h,\tau} (1) = \varphi_{h,\tau}(\frac{1}{2}) = 0$,
\item[.] $\varphi_{h,\tau}'(x) = \frac{h}{\tau}$, if $x \in (0,\tau) \cup (1 - \tau,1)$,
\item[.] $\varphi_{h,\tau}'(x) = - \frac{h}{\tau}$, if $x \in (\frac{1}{2} - \tau, \frac{1}{2} + \tau)$,
\item[.] $\varphi_{h,\tau}(x) = h$ for $x \in (\tau,\frac{1}{2} - \tau)$, $\varphi_{h,\tau}(x) = - h_{\tau}$, for $x \in (\frac{1}{2} - \tau,1 - \tau)$,
\item[.] if $\tau = 0$, $\varphi_{h,0} := h \chi_{[-\frac{1}{2},\frac{1}{2}]} - h \chi_{[-1,-\frac{1}{2})} - h \chi_{(\frac{1}{2},1]}$.
\end{itemize}
\end{definition}

\begin{definition}
Let us define $\phi_{h,\tau}: \mathbb{R}_{\geq 0} \to \mathbb{R}$ such that $\phi_{h,\tau}(x) := \int_x^1 \int_0^s \varphi_{h,\tau}(t) dt ds$.
\end{definition}

\begin{lemma}
For each $\tau \geq 0$ there is a $h_{\tau}$ such that $\phi_{h_{\tau},\tau}(0) = - \frac{1}{2}$. Moreover, the function $\mathcal{H}: \mathbb{R}_{\geq 0} \to \mathbb{R}$ such that $\mathcal{H}(\tau) = h_{\tau}$ is a $C^1$-function.
\end{lemma}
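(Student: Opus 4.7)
The key observation I would exploit is that the whole construction is linear in $h$. From the definition, every feature of $\varphi_{h,\tau}$ (the critical values $\pm h$, the slopes $\pm h/\tau$) scales linearly in $h$, while the zeros and all the transition points of the piecewise prescription are independent of $h$. Hence $\varphi_{h,\tau} = h \cdot \varphi_{1,\tau}$ pointwise, and linearity of the iterated integral defining $\phi_{h,\tau}$ yields $\phi_{h,\tau}(0) = h \cdot \phi_{1,\tau}(0)$. The first assertion of the lemma therefore reduces to checking that $\phi_{1,\tau}(0) \neq 0$, after which one simply sets $h_{\tau} := -1/(2\,\phi_{1,\tau}(0))$; and the $C^1$-regularity of $\mathcal{H}$ reduces to smoothness of $\tau \mapsto \phi_{1,\tau}(0)$.

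To compute $\phi_{1,\tau}(0)$, I would first apply Fubini to rewrite the double integral as $\phi_{1,\tau}(0) = \int_0^1 (1-t)\,\varphi_{1,\tau}(t)\,dt$. A case-by-case inspection on the six affine pieces of $[0,1]$ establishes the antisymmetry $\varphi_{1,\tau}(1-u) = -\varphi_{1,\tau}(u)$; this makes the integrand $(1-2t)\varphi_{1,\tau}(t)$ symmetric about $t = 1/2$ and reduces the computation to
$$\phi_{1,\tau}(0) = \int_0^{1/2} (1-2t)\,\varphi_{1,\tau}(t)\,dt.$$
Splitting this integral over $[0,\tau]$, $[\tau, 1/2-\tau]$ and $[1/2-\tau, 1/2]$, where $\varphi_{1,\tau}$ has the explicit affine form $t/\tau$, $1$, and $(1/2-t)/\tau$ respectively, the $\tau^2$ terms from the two triangular pieces cancel against each other and one obtains $\phi_{1,\tau}(0) = (1-2\tau)/4$.

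This expression is strictly positive on the natural range $\tau \in [0,1/4)$ (the condition $\tau < 1/2-\tau$ needed for the piecewise prescription of $\varphi_{h,\tau}$ to be consistent), so $\mathcal{H}(\tau) = -2/(1-2\tau)$ is well-defined and $C^{\infty}$, in particular $C^1$. The value $\tau = 0$ needs a separate check because $\varphi_{h,0}$ is defined there by a distinct step-function formula; however, computing $\phi_{1,0}(0)$ directly from that formula (or taking $\tau \to 0^+$ in the expression above) gives $1/4$, consistent with $h_0 = -2$. The only real ``obstacle'' I foresee is the routine bookkeeping in verifying the antisymmetry and summing the three elementary integrals without sign errors; the conceptual point is that linearity in $h$ turns what superficially looks like an implicit-function problem into a one-line division.
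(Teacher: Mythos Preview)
Your proof is correct and follows essentially the paper's approach: both arrive at the explicit formula $\phi_{h,\tau}(0)=\pm\frac{h}{4}(1-2\tau)$ and solve for $h_\tau$. Your use of the linearity in $h$ to reduce the problem to a division is cleaner than the paper's (superfluous) appeal to the implicit function theorem, and your sign $\phi_{1,\tau}(0)=+(1-2\tau)/4$ is actually the correct one---since $\varphi_{1,\tau}\ge 0$ on $[0,\tfrac12]$ and is antisymmetric, the inner antiderivative $\int_0^s\varphi_{1,\tau}$ is non-negative on all of $[0,1]$---so the paper's stated formula $-\frac{h}{4}(1-2\tau)$ carries a sign slip and $h_\tau$ is indeed negative.
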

\begin{proof}
It is easy to see that $\phi_{h,\tau}(0) = - \frac{h}{4}(1 - 2 \tau)$, so $\partial_{\tau} \phi_{h,\tau}(0) = \frac{h}{2} \neq 0$, so by the implicit function theorem there is only one $h_{\tau}$ such that $\phi_{h_{\tau},\tau}(0) = - \frac{1}{2}$ and $\mathcal{H}(\tau) = h_{\tau}$ is a $C^1$-function.
\end{proof}

\begin{definition}
Let us define for a non-negative real number $\tau$ the function $\phi_{\tau}: \mathbb{R}_{\geq 0} \to \mathbb{R}$ such that $\phi_{\tau} := \phi_{h_{\tau},\tau}$.
\end{definition}

\begin{lemma}
Let $F_{\tau}: \mathbb{R}_{\geq 0} \to \mathbb{R}$ be the function $F_{\tau}(x) := x^2 \phi_{\tau}''(x) + 4x \phi_{\tau}'(x) + 2 \phi_{\tau}(x)$. Then For every $\delta > 0$ there is a small enough positive real number $\beta$ such that if $\tau < \beta$ then $F_{\tau}(x) \in [-2(1+\delta)F_{\tau}(0),2(1+\delta)F_{\tau}(0)]$.
\end{lemma}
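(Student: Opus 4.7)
The plan is to exploit the identity $F_\tau(x) = (x^2\phi_\tau(x))''$, which follows directly from the product rule. Since $F_\tau(0) = 2\phi_\tau(0) = -1$ is a constant independent of $\tau$, the statement is equivalent to the uniform estimate $|F_\tau(x)| \leq 2(1+\delta)$ on $[0,1]$ for $\tau$ small enough (outside $[0,1]$, $\phi_\tau \equiv 0$ and so $F_\tau \equiv 0$).

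I would first settle the limit case $\tau = 0$ by explicit computation. Since $\varphi_{h_0,0}$ is piecewise constant, direct integration gives $\phi_0(x) = x^2 - \tfrac12$ on $[0,\tfrac12]$ and $\phi_0(x) = -(1-x)^2$ on $[\tfrac12,1]$; substituting into the definition of $F_\tau$ yields
\begin{equation*}
F_0(x) = 12x^2 - 1 \text{ on } [0,\tfrac12), \qquad F_0(x) = -12x^2 + 12x - 2 \text{ on } (\tfrac12,1].
\end{equation*}
An elementary analysis then shows that $\sup_{[0,1]\setminus\{1/2\}}|F_0| = 2$, attained at $x = \tfrac12^-$ (value $+2$) and at $x = 1$ (value $-2$), so the target inequality holds sharply in the limit.

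For $\tau > 0$ I would proceed by perturbation. Here $\phi_\tau''$ equals the constant $-h_\tau$ on $[\tau,\tfrac12-\tau]$ and $+h_\tau$ on $[\tfrac12+\tau,1-\tau]$, with $h_\tau = 2/(1-2\tau) \to 2$, and differs from $\phi_0''$ only on the three transition windows $W_\tau := [0,\tau] \cup [\tfrac12-\tau,\tfrac12+\tau] \cup [1-\tau,1]$ on which $\varphi_{h_\tau,\tau}$ is linear. Integrating, both $\phi_\tau$ and $\phi_\tau'$ converge uniformly to $\phi_0$ and $\phi_0'$ on $[0,1]\setminus W_\tau$ as $\tau\to 0$; hence $F_\tau\to F_0$ uniformly there and the bound $|F_\tau|\leq 2(1+\delta/2)$ holds off $W_\tau$ once $\tau$ is small. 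Within each transition window, $\phi_\tau,\phi_\tau',\phi_\tau''$ are explicit polynomials of degree at most three with coefficients bounded uniformly in $\tau$, and $|W_\tau|\leq 4\tau$; consequently $F_\tau$ is a polynomial on each window whose values differ from the (already controlled) boundary values by $O(\tau)$. Combining these estimates, $|F_\tau(x)|\leq 2(1+\delta)$ on all of $[0,1]$ for $\tau<\beta$ with $\beta$ sufficiently small.

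The hard part will be the central window $[\tfrac12-\tau,\tfrac12+\tau]$, where $F_0$ has a jump from $+2$ to $+1$ and $\sup_{[0,1]}|F_\tau|$ is approached as $\tau\to 0$ --- it is precisely this sharpness at $x=\tfrac12^-$ and $x=1$ that forces the relaxation factor $1+\delta$. The key explicit control there uses the symmetry $G(\tfrac12+u)=G(\tfrac12-u)$ of $G(s)=\int_0^s \varphi_{h_\tau,\tau}(t)\,dt$, which yields $\phi_\tau(\tfrac12)=\tfrac12\phi_\tau(0)=-\tfrac14$ and $\phi_\tau'(\tfrac12)=-G(\tfrac12)= 1$, so that $F_\tau(\tfrac12)\to\tfrac32$; at the endpoints $\tfrac12\mp\tau$ one has $F_\tau\to F_0(\tfrac12^\mp)$ equal to $+2$ and $+1$ respectively, and a direct inspection of the explicit polynomial form of $F_\tau$ on each half of the window confirms that it stays below $2(1+\delta)$ in between, which closes the argument.
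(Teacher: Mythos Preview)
Your plan is essentially correct and is more explicit than the paper's argument. The paper does not split into plateau and transition regions; instead it observes that $|\phi_\tau''|\le h_\tau$ everywhere, so the ``bad'' term satisfies $|x^2\phi_\tau''|\le h_\tau\le(1+\delta)h_0$, while the lower-order part $4x\phi_\tau'+2\phi_\tau$ converges uniformly to its $\tau=0$ limit by $C^1$-closeness of $\phi_\tau$ to $\phi_0$. This is quicker to write, but as literally stated the combination of those two bounds does not by itself yield $|F_\tau|\le 2(1+\delta)$ (the global bound on $x^2\phi_\tau''$ and the bound on the remainder add rather than cancel). Your window-by-window analysis is what is actually needed to make the estimate rigorous, so your route buys a complete argument at the cost of more computation.

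Two statements in your middle paragraph should be tightened. First, the polynomial coefficients of $\phi_\tau''$ in the variable $x$ are \emph{not} uniformly bounded: on each transition window $\phi_\tau''$ is affine with slope $\pm h_\tau/\tau$. The correct normalization is the substitution $x=x_0+\tau s$ with $s\in[-1,1]$, after which $F_\tau$ becomes a polynomial in $s$ with bounded coefficients and your $O(\tau)$ heuristic goes through. Second, the claim that $F_\tau$ stays within $O(\tau)$ of its boundary values fails on the window $[1-\tau,1]$ just as on the central one, since $F_0$ jumps from $-2$ to $0$ at $x=1$. You flag $x=1$ as critical but only carry out the explicit check at $x=\tfrac12$; for completeness one computes on $[1-\tau,1]$ that
\[
F_\tau(1-\tau s)=h_\tau\Bigl(-s+4\tau s^2-\tfrac{10}{3}\tau^2 s^3\Bigr)\in[-h_\tau,0]+O(\tau),\qquad s\in[0,1],
\]
which together with your treatment of the central window closes the argument.
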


\begin{proof}

We use the following facts that hold for $\tau$ small enough:
\begin{itemize}
\item[.] the quadratic term of $F_{\tau}$ is the only one that does not varies continuously as $\tau$ varies. The other two do vary continuously because $\phi_{\tau}$ is $C^1$-close to $\phi_0$,
\item[.] $h_{\tau}$ depends $C^1$ on $\tau$,
\item[.] $F_{\tau}(0) = - 1$ and so does not depend on $\tau$.
\end{itemize} 

For $\tau = 0$, we have that $F_0(x) = (- \frac{1}{2} + 6x^2)h_0$ for $x \in (0,\frac{1}{2})$ and $F_0(x) = (- 1 + 6x + 6x^2)h_0$ for $x \in (\frac{1}{2},1)$. Then it is simple to see that $F_0(0) = - \frac{h_0}{2}$ and $F_0(x) \in [- h_0, h_0]$. Then, for $\tau = 0$ we have that $F_0(x) \in [-2F_0(0),2F_0(0)]$.

So for $\delta > 0$ there is a $\beta > 0$ such that $h_{\tau} \in ((1-\delta) h_0,(1+\delta) h_0)$. We know by definition of $\phi_{\tau}$ that $\phi_{\tau}''(x) \in [-h_{\tau},h_{\tau}]$, which implies $x^2 \phi_{\tau}''(x) \in [-(1+\delta)h_0,(1+\delta) h_0]$. 

We suppose also that $\beta$ is small enough so that $F_{\tau}$ minus its quadratic part is $\delta$ close to $F_0$ minus its quadratic part. Then $F_{\tau}(x) \in [-2(1+\delta)F_0(0),2(1+\delta)F_0(0)]$ but $F_0(0) = F_{\tau}(0)$ so the statement of the lemma is proved.
 
\end{proof}

\begin{remark}
Let us define the function $\phi^{\lambda}_{\tau}: \mathbb{R} \to \mathbb{R}$ as $\phi^{\lambda}_{\tau}(x) := \phi_{\tau}(\frac{x}{\lambda})$. Our bump functions $\phi^{\lambda}_{\tau}$ have support in an interval of length $2 \lambda$, so let us notice that if the lemma holds for $\phi_{\tau}$ with support in $[0,1]$, then it holds for $\phi^{\lambda}_{\tau}$ for any $\lambda$. It also holds if $\phi_{\tau}$ is multiplied by a constant. And it also holds if $\phi$ is a $C^{\infty}$ function $C^2$ close to $\phi_{\tau}$.
\end{remark}

\begin{definition}
Let the function $\phi_{k,j}: \mathbb{R} \to \mathbb{R}$ be such that $$\phi_{k,j}(x) = \left\{ \begin{aligned} \phi^{\epsilon}(x) & \text{ if } k \neq j, \\ \phi^{\epsilon^2}(x) & \text{ if } k=j,  \end{aligned} \right.$$ where $x \in \mathbb{R}$, $\phi^{\lambda}$ is a $C^{\infty}$ function $C^2$ close to $\phi^{\lambda}_{\tau}$  such that the previous lemma holds for both functions.
\end{definition}

\begin{definition}
Now we define $\alpha: U \to \mathbb{R}$ as $$\alpha(t,x) =  \sum_{k=r+1}^{n-1}x^2_{k} \Phi_k(x),$$ such that for $k=1,\ldots,n-1$, the function $\Phi_k: U \to \mathbb{R}$ is $$\Phi_k(x) = \frac{1}{4} \phi_{k,1}(x_1) \phi_{k,2}(x_2) \phi_{k,3}(x_3) \ldots \phi_{k,2n-1}(x_{2n-1}).$$
\end{definition}

\begin{lemma}\label{estimates}
For $\alpha : U \to \mathbb{R}, (t,x) \to \sum_{k=r+1}^{n-1} x^2_k \Phi_k(x)$, there exists $M_0$ independent of $\epsilon$, the following inequalities are satisfied:
\begin{itemize} 
\item[i.] $|\alpha| \leq M_0 \epsilon^4$, 
\item[ii.] $|\partial_{x_j} \alpha | \leq M_0 \epsilon^2$, 
\item[iii.] $|\partial^2_{x_i x_j} \alpha | \leq M_0 \epsilon$, if $i \neq j$, or if $i \leq r$, or $j \leq r$,
\item[iv.] $| \partial^2_{x_k x_k} \alpha | \leq M_0$,  $k=r+1,\ldots,n-1$.
\end{itemize}
\end{lemma}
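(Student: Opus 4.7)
The plan is to prove each inequality by exploiting three observations about the bump functions: the scaling law for $\phi^\lambda(x)=\phi(x/\lambda)$, the support restrictions, and the quadratic prefactor $x_k^2$ which supplies an $\epsilon^4$ compensation.

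First, I would record the scaling estimates. Since $\phi^\lambda(x)=\phi(x/\lambda)$ with $\phi$ a $C^2$ function of compact support, there is a constant $C$ with $|\phi^\lambda|\le C$, $|(\phi^\lambda)'|\le C/\lambda$ and $|(\phi^\lambda)''|\le C/\lambda^2$. Applied to our setting this gives $|\phi_{k,k}|\le C,\ |\phi_{k,k}'|\le C/\epsilon^2,\ |\phi_{k,k}''|\le C/\epsilon^4$, and $|\phi_{k,j}|\le C,\ |\phi_{k,j}'|\le C/\epsilon,\ |\phi_{k,j}''|\le C/\epsilon^2$ when $j\ne k$. Moreover, wherever $\Phi_k(x)\neq 0$ we have $|x_k|\lesssim \epsilon^2$ (because $\phi_{k,k}=\phi^{\epsilon^2}$ is supported on an interval of length comparable to $\epsilon^2$) and $|x_j|\lesssim\epsilon$ for every $j\ne k$. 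In particular $|\Phi_k|\le C$ and $x_k^2\lesssim\epsilon^4$ on the support.

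With these bounds in hand, (i) is immediate: $|\alpha|\le\sum_k x_k^2|\Phi_k|\lesssim\epsilon^4$. For (ii) I would apply the product rule to $\partial_{x_j}(x_k^2\Phi_k)=2x_k\delta_{jk}\Phi_k+x_k^2\partial_{x_j}\Phi_k$. The first piece is bounded by $|x_k|\lesssim\epsilon^2$, while the second costs at worst an extra factor $1/\epsilon^2$ (when the derivative lands on the ``narrow'' factor $\phi_{k,k}$), which is absorbed by $x_k^2\lesssim\epsilon^4$, giving $\epsilon^2$.

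For (iii) and (iv) I expand
\begin{equation*}
\partial^2_{x_ix_j}(x_k^2\Phi_k)=2\delta_{ik}\delta_{jk}\Phi_k+2x_k(\delta_{jk}\partial_{x_i}\Phi_k+\delta_{ik}\partial_{x_j}\Phi_k)+x_k^2\partial^2_{x_ix_j}\Phi_k
\end{equation*}
and track how many derivatives hit the narrow factor $\phi_{k,k}$. In the regime of (iii) ($i\ne j$ or $\min(i,j)\le r$), the Kronecker deltas either vanish (since $k\ge r+1$) or cannot both concentrate derivatives on $\phi_{k,k}$; one checks case by case that the worst loss is $1/\epsilon^3$, yielding $x_k^2\cdot\epsilon^{-3}=\epsilon$. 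For (iv), where $i=j=k$, both derivatives can land on $\phi_{k,k}$, producing the maximal $1/\epsilon^4$, but this is exactly balanced by $x_k^2\lesssim\epsilon^4$; the boundary terms $2\Phi_k$ and $4x_k\partial_{x_k}\Phi_k$ are similarly of order one. Taking $M_0$ as the maximum of the resulting constants, which depend only on $\phi$, $n$ and $r$, finishes the proof.

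The only mildly delicate point is (iii): one must verify that whenever an index $i$ or $j$ is either $\le r$ or distinct from $k$, at most one derivative falls on the $\phi^{\epsilon^2}$ factor, so that the $1/\epsilon^4$ scaling never appears. Everything else is a straightforward product-rule expansion combined with the two support widths $\epsilon$ and $\epsilon^2$.
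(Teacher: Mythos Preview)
Your proposal is correct and follows essentially the same approach as the paper: both arguments track the orders of $x_k$, $\Phi_k$, $d\Phi_k$, and $d^2\Phi_k$ coming from the scaling $\phi^\lambda(x)=\phi(x/\lambda)$ and combine them via the product rule. Your write-up is in fact more careful than the paper's own proof, which records only the worst-case orders (``$x_k$ is of order $\epsilon^2$, $\Phi_k$ of order $1$, $d\Phi_k$ of order $\epsilon^{-2}$, $d^2\Phi_k$ of order $\epsilon^{-4}$'') and reads off the four bounds directly, whereas you explicitly verify the delicate point in (iii) that at most one derivative can land on the narrow factor $\phi_{k,k}$.
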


\begin{proof} We check each term of the sum $x^2_k \Phi_k$, and observe that $x_k$ is of order $\epsilon^2$, $\Phi_k$ is of order of $1$, $d\Phi_k$ is of order $\epsilon^{-2}$, $d^2\Phi_k$ is of order $\epsilon^{-4}$ and therefore the next inequalities hold: 
\begin{itemize}
\item[i.] $|\alpha| \leq \frac{1}{4} \epsilon^4$.
\item[ii.] $|\partial_{x_j} \alpha | \leq \frac{1}{4} \epsilon^4 2 \epsilon^{-2}$. 
\item[iii.:] $|\partial^2_{x_j x_i} \alpha| \leq \frac{n}{4} \epsilon^4 4 \epsilon^{-2}$ if $j \neq i$. 
\item[iv.:]$| \partial^2_{x_k x_k} \alpha | \leq \frac{1}{4} \epsilon^4 3 \epsilon^{-4} \leq 1$.
\end{itemize}
\end{proof}

By definition $\phi_{k,1}$, $\ldots \phi_{k,n-1}$, except $\phi_{k,k}$, have support on $[-\epsilon,\epsilon]$, and $\phi_{k,i}(0) = 1$, $\phi_{k,i}(\pm \epsilon) = 0$, with $\epsilon < \epsilon_0$, and $\phi_{k,k}$ have support on $[-\epsilon^2,\epsilon^2]$, $\phi_{k,k}(0) = -1$ and $\phi_{k,k}(\pm \epsilon^2) = 0$. This ensures that the only second order partial derivative of $\alpha$ that does not go to $0$ as $\epsilon \to 0$ is $\partial^2_{k,k} \alpha$. Moreover, $\alpha$ is $C^1$-close to the constant zero function.

\begin{remark}\label{o. curvature tensor} The coordinates of the curvature tensor in the tubular neighborhood of $\gamma$ are:
\begin{equation}\label{eq1}
R_{ijkl} = - \frac{1}{2} (\partial^2_{ik} g_{jl} + \partial^2_{jl} g_{ik} - \partial^2_{il} g_{jk} - \partial^2_{jk} g_{il} ) - \Gamma^T_{ik} g^{-1} \Gamma_{jl} + \Gamma^T_{il} g^{-1} \Gamma_{jk},
\end{equation}
\noindent where $\Gamma_{ik} := [\Gamma_{j,ik}]_{j}$ and $\Gamma_{j,ik} := \frac{1}{2} (\partial_i g_{jk} + \partial_k g_{ij} - \partial_j g_{ik}).$

Using that  $\alpha$ is $C^1$-close to the constant zero function, we can get the following estimates for  the curvature tensor of $g^*$:
\begin{eqnarray*}
R^*_{ijkl} (t,x) \approx R_{ijkl}(t,x) & - & \frac{1}{2} (\delta _{j+l,0} \partial^2_{ik} \alpha(t,x) + \delta _{i+k,0} \partial^2_{jl} \alpha(t,x) \\ & - & \delta _{j+k,x} \partial^2_{il} \alpha(t,x) - \delta _{i+l,x} \partial^2_{jk} \alpha(t,x) ),
\end{eqnarray*}
\noindent and so
$$R^*_{0j0l}(t,x) \approx R_{0j0l}(t,x) - \frac{1}{2} (\partial^2_{jl} \alpha(t,x)).$$
\noindent Then:
$$R^*_{0i0j}(t,x) \approx R_{0i0j}(t,x), i \neq j,i,j = 2,\ldots,n-1,$$ 
\begin{eqnarray*} R^*_{0k0k}(t,x) & \approx & R_{0k0k}(t,x) - \frac{1}{2} (\partial^2_{kk} \alpha(t,x)) \\ & \approx & R_{0k0k}(t,x) - \frac{1}{4}(x_k^2 \phi_{k,k}''(x_k) + 4x_k \phi_{k,k}'(x_k) + 2 \phi_{k,k}(x_k)) . \end{eqnarray*}

\end{remark}

Previous remark shows that the curvature is only deformed in the direction of the subspace generated by $\frac{\partial}{\partial x_k},k=r+1,\ldots,n-1$ along geodesics close to $\gamma$. To accomplish this we have constructed a bump function such that, as $\epsilon \to 0$, only the term $\partial^2_{x_k x_k} \alpha,k=r+1,\ldots,n-1$ perturbs the curvature. In particular,  if the curvature is changed by $\frac{1}{4}$ along the closed geodesic $\gamma$, then the curvature is deformed by $\pm \frac{1}{2}$ in the weak directions of the splitting of the geodesic flow, so the curvature for the strong directions is still greater than in the other directions. This explains in a rough way why the geodesic flow still preserves the strong directions.

\subsection{Partial hyperbolicity of the geodesic flow of $g^*$}
\label{s.geode}

To prove that the geodesic flow of the new metric $g^*$ is partially hyperbolic we are going to define the strong stable and strong unstable cones of the geodesic flow of $g^*$.

\begin{definition}\label{def.cones ph}
The strong unstable and strong stable cone fields for $g^*$ are:
\begin{eqnarray*} C^u(v,c) := \left\{ (\xi,\eta) \in S(x,v) : \frac{g^*(\xi_A+\eta_A,\xi_A+\eta_A)}{g^*(\xi,\xi)+g^*(\eta,\eta)} \geq c \right\}, \end{eqnarray*}
\begin{eqnarray*} C^s(v,c) := \left\{ (\xi,\eta) \in S(x,v) : \frac{g^*(\xi_A-\eta_A,\xi_A-\eta_A)}{g^*(\xi,\xi)+g^*(\eta,\eta)} \geq c \right\}. \end{eqnarray*}
\noindent for a real number $c \in (1,2)$, and $v \in T_xM$, $g^*_x(v,v) = 1$.
\end{definition}

\begin{remark}
Notice that the cone field defined above coincides with the cone field associated with $g$ outside the region of the deformation of the metric $g$.
\end{remark}

\begin{remark}
Remember that $$\xi_A := Pr_A \xi, \xi_B := Pr_B \xi,$$ $$\xi'_A = Pr_A \frac{D}{dt} \xi, \xi'_B = Pr_B \frac{D}{dt} \xi.$$ We also define $$\xi_{A'} = \left( \frac{D}{dt} Pr_A \right) \xi, \xi_{B'} = \left( \frac{D}{dt} Pr_B \right) \xi,$$ because for $g^*$ the subspaces $A$ and $B$ are not parallel. 
\end{remark}

\begin{proposition}
\label{p.phcone}
The geodesic flow of $g^*$ preserves the strong unstable cone field $C^u(v,c)$ and the strong stable cone field $C^s(v,c)$ provided by definition \ref{def.cones ph}, for some $c \in (1,2)$ and some $\epsilon$ small enough.
\end{proposition}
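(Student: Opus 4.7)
By Remark \ref{r.reverse}, it suffices to establish proper invariance of the strong unstable cone field $C^u(v,c)$; the stable case then follows by applying the same argument to the reversed geodesic flow. The plan is to invoke Lemma \ref{l.angle}, i.e.\ to verify that on the boundary of the cone
\[
\partial C^u(v,c) = \left\{(\xi,\eta)\in S(x,v):\tfrac{g^*(\xi_A+\eta_A,\xi_A+\eta_A)}{g^*(\xi,\xi)+g^*(\eta,\eta)} = c\right\}
\]
the angle cone variation $\frac{d}{dt}\Theta^u_A(d\phi^*_t(\xi,\eta))$ is strictly positive, for some $c\in(1,2)$ and some small enough $\epsilon$. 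Since outside the tubular neighborhood $\mathcal{B}(\gamma,\epsilon)$ the metric $g^*$ coincides with $g$, and along trajectories that never enter $\mathcal{B}(\gamma,\epsilon)$ the calculation of Subsection \ref{s. cone var for anosov} gives proper invariance directly, the work is concentrated on trajectories that cross $\mathcal{B}(\gamma,\epsilon)$.

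I would split these trajectories into three regimes according to whether a point $(x,v)\in UM$ with $x\in\mathcal{B}(\gamma,\epsilon)$ lies in the parallel or transversal region of Definition 5.9. \textbf{Along $\gamma$ itself}, Remark \ref{o. curvature tensor} together with the bump function $\alpha$ gives $R^*_{0i0j}(t,0)\approx R_{0i0j}(t,0)$ for $i\ne j$, while $R^*_{0k0k}(t,0)$ is deformed by $-\tfrac14 F_{\tau}(0) = \tfrac14$ only for $k>r$, i.e.\ in the $B$-direction. So along $\gamma$ the sectional curvature in the $A$-direction remains $-1$ and the one in the $B$-direction becomes $0$. Redoing the algebra of Subsection \ref{s. cone var for anosov} with the perturbed curvature tensor, the only effect on $\tfrac{d}{dt}\Theta_A^u$ is to kill the $g(\xi_B,\eta_B)$ cross-term; the resulting sum of squares remains strictly positive on $\partial C^u(v,c)$ for every $c\in(1,2)$, giving proper invariance along $\gamma$.

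\textbf{In the parallel region} (Section \ref{s.para1}, \ref{s.para2}), I would use continuity: the Jacobi equation, the curvature tensor, and the projectors $Pr_A,Pr_B$ all depend continuously on $(x,v)$, so the strict positivity obtained on $\gamma$ persists in a neighborhood of $\gamma'$ in $UM$, i.e.\ for $\theta$-parallel vectors with $\theta$ small (depending on $\epsilon$). The extra terms $\xi_{A'}$, $\xi_{B'}$ coming from the fact that $A,B$ are no longer parallel for $g^*$ are controlled by the estimates of Lemma \ref{estimates}, which force $|\partial^2_{x_ix_j}\alpha|\le M_0\epsilon$ except in the pure $B$-direction; these contribute $O(\epsilon)$ perturbations to $\tfrac{d}{dt}\Theta_A^u$, absorbed by the strict positivity obtained on $\gamma$.

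\textbf{The main obstacle is the transversal region} (Section \ref{s.transv}), where $|v_i|\ge\theta$ for some $i$. Here proper invariance of the cones may genuinely fail: the deformation of the curvature in the weak directions can a priori push $\Theta_A^u$ downward. The plan is twofold: first, bound the loss by an explicit constant $L(\theta,\epsilon)$ using Lemma \ref{estimates} and the fact that the transversal crossing time is at most of order $\epsilon/\theta$ (the geodesic traverses a tube of radius $\epsilon$ at transversal speed at least $\theta$); second, compare this with the gain along the long stretch of the geodesic spent outside $\mathcal{B}(\gamma,\epsilon)$, where Subsection \ref{s. cone var for anosov} gives a uniform lower bound $a>0$ on $\tfrac{d}{dt}\Theta_A^u$. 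Since $\gamma$ has no self-intersection, the return time to $\mathcal{B}(\gamma,\epsilon)$ is bounded below independently of $\epsilon$, and for $\epsilon$ sufficiently small the cumulative gain outside dominates the transversal loss, giving proper invariance of $C^u(v,c)$ along every orbit segment. Combined with the parallel case and the exterior case, this gives the proposition.
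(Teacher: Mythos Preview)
Your proposal is correct and follows essentially the same strategy as the paper: the same parallel/transversal decomposition of vectors in the tube, the same angle-cone-variation computation controlled by the estimates of Lemma \ref{estimates}, and the same time-comparison argument for the transversal region (crossing time $\sim\epsilon/\theta$ against a return time outside $\mathcal{B}(\gamma,\epsilon)$ that is bounded below independently of $\epsilon$, which the paper justifies via compactness and uniqueness of ODE solutions rather than directly via non-self-intersection of $\gamma$). One minor correction: along $\gamma$ the $B$-curvature becoming zero does not \emph{kill} the $g(\xi_B,\eta_B)$ cross-term but only reduces its coefficient from $5/4$ to $1$, and the smallness of $\xi_{A'},\eta_{A'}$ comes from the first derivatives of $\alpha$ (which are $O(\epsilon^2)$) rather than the second --- neither point affects your conclusion, since the resulting quadratic form is still positive definite on $\partial C^u(v,c)$.
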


We only prove proper invariance of the strong unstable cone (see remark \ref{r.reverse}). We divide the proof in several steps, but first, in the next subsection, we prove that along the geodesic $\gamma$, the geodesic flow of $g^*$ is partially hyperbolic but not hyperbolic.

\subsubsection{Along $\gamma$ the geodesic flow of $g^*$ is not hyperbolic}
\label{s.geod nh}

From a corollary in Eberlein's article \cite{E1} follows:

\begin{coron}[3.4 \cite{E1}] 
If the geodesic flow is Anosov, then the following holds: Let any $\gamma$ be a unit speed geodesic, and $E(t)$ any non-zero perpendicular parallel vector field along $\gamma$, then the sectional curvature $K(\gamma',E)(t) < 0$ for some real number $t$.
\end{coron}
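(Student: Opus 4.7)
The plan is to prove the contrapositive: assuming that there exist a unit-speed geodesic $\gamma$ and a non-zero perpendicular parallel vector field $E(t)$ along $\gamma$ with $K(\gamma'(t),E(t)) \geq 0$ for every $t \in \mathbb{R}$, I will derive a contradiction with the Anosov hypothesis. Since $|E(t)|$ is constant along $\gamma$, I may normalize so that $|E(t)| = 1$. The entire argument takes place on the perpendicular bundle $\gamma'(t)^\perp$ and uses the Riccati formalism for Jacobi tensors.

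First I invoke two standard facts about an Anosov geodesic flow: the flow has no conjugate points (Klingenberg/Anosov), and along each geodesic the unstable Jacobi tensor $Y^u(t) : \gamma'(t)^\perp \to \gamma'(t)^\perp$ is non-singular for all $t \in \mathbb{R}$. Consequently the Riccati operator
\[
U^u(t) := (Y^u)'(t)\,(Y^u(t))^{-1}
\]
is defined for all $t$. Because the unstable subbundle is Lagrangian with respect to the contact structure on $S(UM)$, $U^u(t)$ is symmetric; because it encodes the forward exponential expansion of Jacobi fields in $E^u \oplus E^{uu}$, it is positive definite; and the Anosov assumption yields a uniform bound $\|U^u(t)\| \leq C$. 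Differentiating the defining relation gives the matrix Riccati equation
\[
(U^u)'(t) + U^u(t)^2 + \mathcal{R}(t) = 0, \qquad \mathcal{R}(t)w := R(\gamma'(t),w)\gamma'(t).
\]

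Next I form the scalar $\tilde{u}(t) := \langle U^u(t) E(t), E(t)\rangle$, which is strictly positive (since $U^u$ is positive definite and $E \neq 0$) and bounded above by $C$. Because $E$ is parallel, the covariant derivative acts only on $U^u$; using symmetry of $U^u$ together with the Riccati equation one obtains
\[
\tilde{u}'(t) = -\|U^u(t) E(t)\|^2 - \langle \mathcal{R}(t) E(t), E(t)\rangle = -\|U^u(t)E(t)\|^2 - K(\gamma'(t),E(t)).
\]
Cauchy--Schwarz gives $\|U^u E\|^2 \geq \langle U^u E,E\rangle^2 = \tilde{u}^2$, and the standing hypothesis $K(\gamma',E) \geq 0$ therefore reduces the above identity to the scalar Riccati inequality $\tilde{u}'(t) \leq -\tilde{u}(t)^2$ with $\tilde{u} > 0$. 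Rewriting as $(1/\tilde{u})'(t) \geq 1$ and integrating backward from $0$ yields $1/\tilde{u}(t) \leq 1/\tilde{u}(0) + t$ for $t < 0$; at $t_0 = -1/\tilde{u}(0)$ the right-hand side becomes zero, forcing $\tilde{u}(t)$ to blow up to $+\infty$ at some time in $(t_0,0)$. This contradicts the uniform bound $\tilde{u}(t) \leq \|U^u(t)\| \leq C$ coming from the Anosov property, completing the proof.

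The main technical obstacle is assembling the standard Riccati package for Anosov geodesic flows, namely the symmetry, positive definiteness, uniform boundedness and global existence of $U^u$; once this is in place the rest is a one-line ODE comparison. An alternative approach through the Morse index form or through a direct comparison of Jacobi fields with those of the flat strip $K \equiv 0$ is possible, but the Riccati argument above is the shortest route and fits naturally with the cone/domination framework already developed in Section~\ref{s.predef}.
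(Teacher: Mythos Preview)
The paper does not prove this corollary; it is quoted from Eberlein~\cite{E1} and invoked as a black box in Lemma~\ref{l.eber}. So there is no proof in the text to compare against, and your Riccati approach is in fact close to how results of this type are established.

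There is, however, one genuine gap. You assert that $U^u$ is positive definite ``because it encodes the forward exponential expansion''. The Anosov hypothesis gives exponential growth of the \emph{Sasaki} norm $|(\zeta,\zeta')|$ for unstable Jacobi fields, not of $|\zeta|$ itself, and these are different statements. In fact $U^u>0$ along every geodesic is equivalent to the absence of \emph{focal} points, which is strictly stronger than the absence of conjugate points; Anosov geodesic flows need not be free of focal points, so the claim $\tilde u(0)>0$ is unjustified and your blow-up step, as written, does not go through.

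The repair is short and keeps your scheme intact. The differential inequality $\tilde u'\le -\tilde u^{\,2}$ holds regardless of the sign of $\tilde u$, since Cauchy--Schwarz gives $\|U^uE\|^2\ge\tilde u^{\,2}$ without any positivity assumption. Hence if $\tilde u(0)>0$ one obtains blow-up to $+\infty$ in finite backward time, while if $\tilde u(0)<0$ one obtains blow-up to $-\infty$ in finite forward time; either contradicts the uniform bound on $U^u$. In the remaining case $\tilde u(0)=0$, run the identical computation with the stable Riccati operator $U^s$: if the corresponding quantity $\langle U^sE,E\rangle$ also vanished at $t=0$ you would have $\langle (U^u-U^s)E,E\rangle=0$, contradicting the transversality $E^u\cap E^s=\{0\}$ (equivalently $U^u-U^s>0$) that the Anosov property \emph{does} guarantee. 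With this trichotomy in place the argument is complete.
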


For the geodesic flow of the new metric $g^*$, if   we can find 
$E(t)$  a non-zero perpendicular parallel vector field along $\gamma$, and $K(\gamma',E)(t) = 0$, then the geodesic flow of the metric $g^*$ is not Anosov.

\begin{lemma}
\label{l.eber}
If $\Phi_k(t,0) = -\frac{1}{4}$ then, following Eberlein's criterion, the geodesic flow of $g^*$ is not Anosov.
\end{lemma}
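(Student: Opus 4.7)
The plan is to produce an explicit non-zero, $g^*$-perpendicular, $g^*$-parallel vector field $E(t)$ along $\gamma$ for which the $g^*$-sectional curvature $K^*(\gamma',E)(t)$ vanishes identically. By the contrapositive of Corollary 3.4 of \cite{E1} quoted just above, this is exactly what is needed to conclude that the geodesic flow of $g^*$ is not Anosov.

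The natural candidate is $E(t) := e_k(t)$ for any $k \in \{r+1, \ldots, n-1\}$, i.e., one of the vectors of the parallel orthonormal frame spanning $B(\gamma(t), \gamma'(t))$ used to define the normal coordinates (such $k$ exists because the metric has nonconstant negative curvature, so $B$ is non-trivial). The first step is to check that $e_k$ is still $g^*$-parallel along $\gamma$. Since $\alpha(t,x) = \sum_{j \geq r+1} x_j^2\, \Phi_j(x)$ vanishes to order two at $x=0$, both $\alpha(t,0) = 0$ and $\partial_i \alpha(t,0) = 0$ for every $i$. Hence $g^*_{ij}(t,0)=g_{ij}(t,0)$ and $\partial_i g^*_{jk}(t,0)=\partial_i g_{jk}(t,0)$ along $\gamma$, which means the Christoffel symbols of $g^*$ and $g$ coincide at every point of $\gamma$. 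The parallel transport ODE along $\gamma$ is therefore the same for both metrics, so $e_k$ remains parallel and perpendicular to $\gamma'$ for $g^*$, and is $g^*$-unit.

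The second step is to compute $K^*(\gamma'(t), e_k(t))$ along $\gamma$. Using the coordinate expression (\ref{eq1}) for the curvature and the observation in Remark \ref{o. curvature tensor}, the terms quadratic in the Christoffel symbols contribute identically for $g$ and $g^*$ at points of $\gamma$ (since $\Gamma^* = \Gamma$ and $g^{*-1}=g^{-1}$ there), so only the second-derivative term produces a discrepancy, namely
\begin{equation*}
R^*_{0k0k}(t,0) - R_{0k0k}(t,0) \;=\; -\tfrac{1}{2}\,\partial^2_{kk}\alpha(t,0).
\end{equation*}
A direct computation gives $\partial^2_{kk}\alpha(t,0) = 2\Phi_k(t,0) = -\tfrac{1}{2}$ by hypothesis, while $R_{0k0k}(t,0) = K(\gamma', e_k)(t) = -\tfrac{1}{4}$ because $e_k \in B(\gamma(t),\gamma'(t))$ for the original locally symmetric metric. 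Therefore $R^*_{0k0k}(t,0) = -\tfrac{1}{4} + \tfrac{1}{4} = 0$ for every $t$, and since the relevant frame is $g^*$-orthonormal along $\gamma$, this yields $K^*(\gamma'(t),e_k(t)) \equiv 0$. Applying Corollary 3.4 of \cite{E1} finishes the argument.

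I do not anticipate a serious obstacle: the proof is essentially bookkeeping. The only delicate point is making sure that $\alpha$ and $d\alpha$ both vanish along $\gamma$, which is what ensures that the inverse metric, the Christoffel symbols and the quadratic-in-$\Gamma$ part of the curvature all agree with those of $g$ on $\gamma$, so that the entire perturbation of the curvature in the direction $e_k$ is concentrated in $\partial^2_{kk}\alpha(t,0)$ and is exactly the amount needed to cancel the original sectional curvature $-\tfrac{1}{4}$.
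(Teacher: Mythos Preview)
Your proof is correct and follows essentially the same approach as the paper: compute $R^*_{0k0k}(t,0) = R_{0k0k}(t,0) - \tfrac{1}{2}\partial^2_{kk}\alpha(t,0) = -\tfrac{1}{4} - \Phi_k(t,0) = 0$ and invoke Eberlein's criterion. Your write-up is slightly more explicit in verifying that $e_k$ remains $g^*$-parallel along $\gamma$ (the paper relegates this observation to the surrounding discussion and to Lemma~\ref{l.geod}), but the content is the same.
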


\begin{proof}

Recalling remark \ref{o. curvature tensor}, follows that  the curvature tensor at $\gamma$ is:

\begin{eqnarray*}
R^*_{ijkl} (t,0) = R_{ijkl}(t,0) & - & \frac{1}{2} (\delta _{j+l,0} \partial^2_{ik} \alpha(t,0) + \delta _{i+k,0} \partial^2_{jl} \alpha(t,0) \\ & - & \delta _{j+k,0} \partial^2_{il} \alpha(t,0) - \delta _{i+l,0} \partial^2_{jk} \alpha(t,0) ),
\end{eqnarray*}
\noindent and
$$R^*_{0j0l}(t,0) = R_{0j0l}(t,0) - \frac{1}{2} (\partial^2_{jl} \alpha(t,0)).$$
\noindent Then, along $\gamma$:
$$R^*_{0i0j}(t,0) = R_{0i0j}(t,0), i \neq j,i,j = 2,\ldots,n-1,$$ 
\begin{eqnarray*} R^*_{0k0k}(t,0) & = & R_{0k0k}(t,0) - \frac{1}{2} (\partial^2_{kk} \alpha(t,0)) \\ & = & R_{0k0k}(t,0) - \Phi_k(t,0). \end{eqnarray*}
\noindent For the initial metric and $k = r+1,\ldots,2n-1$: 
$$R_{0k0k}(t,0) = g_{00}(t,0)g_{kk}(t,0)K(\gamma'(t),e_k(t)) = - \frac{1}{4}.$$
\noindent So, if $\Phi_k(t,0) = - \frac{1}{4}$, then $R^*_{0k0k}(t,0) = 0$. Then, Eberlein's corollary applies, and the geodesic flow of $g^*$ is not Anosov. 
\end{proof}

\subsubsection{Along $\gamma$ the geodesic flow of $g^*$ is partially hyperbolic}
\label{s.phgamma}

We are going to show that the strong unstable cone field of the geodesic flow of section \ref{s.orig} still works for the geodesic flow of the new metric $g^*$ along $\gamma$.

\begin{lemma}
\label{l.geod}
For the new metric $g^*$ and along the geodesic $\gamma$ there is an invariant splitting $S(t) = E^{ss} \oplus E^c \oplus E^{uu}$, such that $E^{ss} = E_g^{ss}$, $E^c = E_g^s \oplus E_g^u$, $E^{uu} = E_g^{uu}$, where $E^{\sigma}_g$ are the subbundles of the hyperbolic invariant splitting of the geodesic flow of the original metric $g$, $\sigma = uu, u, s, ss$, and $S(t)$ is the contact structure of $U^*M$ along $(\gamma(t),\gamma'(t))$.   
\end{lemma}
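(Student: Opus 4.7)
The plan is to analyze the Jacobi equation along $\gamma$ for the new metric $g^*$ and exploit the fact that, at $x=0$, the deformation touches only the second derivatives of the metric, leaving first derivatives (hence the connection) unchanged.

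First I would verify that along $\gamma$ the Levi-Civita connection of $g^*$ agrees with that of $g$. Since $\alpha(t,x)=\sum_{k=r+1}^{n-1}x_k^2\Phi_k(x)$, every first partial derivative $\partial_i\alpha(t,0)$ vanishes at $x=0$. Thus $g^*_{ij}(t,0)=g_{ij}(t,0)$ and $\partial_k g^*_{ij}(t,0)=\partial_k g_{ij}(t,0)$ for all $i,j,k$, so all Christoffel symbols coincide at $\gamma$. Consequently the horizontal/vertical decomposition of $T(UM)$, the identification of $S(\theta)$ with $\{v\}^\perp\oplus\{v\}^\perp\subset T_xM\oplus T_xM$, the parallel transport along $\gamma$, and the $g$-parallel subbundles $A(\gamma',\cdot)$ and $B(\gamma',\cdot)$ all survive untouched for $g^*$ along $\gamma$.

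Second, I would compute $R^*(\gamma',\cdot)\gamma'$ restricted to $\{\gamma'\}^\perp$ at points of $\gamma$. From remark \ref{o. curvature tensor} and lemma \ref{l.eber}, along $\gamma$ one has $R^*_{0i0j}(t,0)=R_{0i0j}(t,0)$ for $i\neq j$, $R^*_{0k0k}(t,0)=R_{0k0k}(t,0)$ for $k\le r$, and $R^*_{0k0k}(t,0)=R_{0k0k}(t,0)-\Phi_k(t,0)=0$ for $k\ge r+1$ (using $\Phi_k(t,0)=-\tfrac14$). Because in the locally symmetric metric $g$ the operator $R(\gamma',\cdot)\gamma'$ is diagonal in the chosen basis with eigenvalue $-1$ on $A$ and $-\tfrac14$ on $B$, this gives
\begin{equation*}
R^*(\gamma',w)\gamma'=-w\ \text{ if } w\in A(\gamma',\cdot),\qquad R^*(\gamma',w)\gamma'=0\ \text{ if } w\in B(\gamma',\cdot).
\end{equation*}
In particular $R^*(\gamma',\cdot)\gamma'$ preserves the splitting $A\oplus B$, and, because $A$ and $B$ are parallel along $\gamma$ for $g^*$ (from the first step), the Jacobi equation $\zeta''+R^*(\gamma',\zeta)\gamma'=0$ decouples into independent systems on $A$ and on $B$.

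Third, I would read off the invariant decomposition from these decoupled systems. On the $A$-component the equation is $\zeta_A''=\zeta_A$, identical to the one for $g$; hence the subbundles $P^s_A=E^{ss}_g=\{(w,-w):w\in A\}$ and $P^u_A=E^{uu}_g=\{(w,w):w\in A\}$ remain invariant with contraction/expansion rate $e^{\mp t}$. On the $B$-component the equation is $\zeta_B''=0$, whose solutions $\zeta_B(t)=u+tw$ stay in the $2\dim B$-dimensional subspace $\{(u,w):u,w\in B\}=P^s_B\oplus P^u_B=E^s_g\oplus E^u_g$, giving at most linear growth. Setting $E^{ss}:=E^{ss}_g$, $E^{uu}:=E^{uu}_g$, $E^c:=E^s_g\oplus E^u_g$ we get a $d_{\gamma'(t)}\phi^*_t$-invariant decomposition of $S(t)$ with the expected dominated behaviour (exponential on the extremes, at most polynomial on the center). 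The main thing to be careful about is step one, checking that $A$ and $B$ really remain parallel along $\gamma$ for the deformed metric so that the decoupling in step two is clean; once the Christoffel symbols agree at $x=0$ this is automatic, and the rest of the argument is the straightforward reading of the Jacobi equation.
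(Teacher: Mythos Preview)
Your proof is correct and follows essentially the same approach as the paper: both verify that the Christoffel symbols of $g^*$ and $g$ coincide along $\gamma$ (so parallel transport and the subspaces $A$, $B$ are unchanged there), compute that $R^*(\gamma',\cdot)\gamma'$ acts as $-\mathrm{id}$ on $A$ and as $0$ on $B$, and then read off the invariant splitting from the decoupled Jacobi equation. The paper writes this via the matrix $K^*(t)=\begin{bmatrix}-Id_r&0\\0&0\end{bmatrix}$ in a parallel frame, while you phrase it as a decoupling along the $A\oplus B$ decomposition, but the content is the same.
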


\begin{proof}
The normal coordinates that were defined for $g$ along the closed geodesic $\gamma$ are still normal
coordinates for $g^*$, and observe that it has the same Christoffel symbols along $\gamma$. This implies that $g^*$ has the same parallel transport as $g$ along $\gamma$. 

Taking $\{E_0(t) = \gamma'(t),E_1(t),\ldots,E_r(t),E_{r+1}(t),\ldots,E_{n-1}(t) \}$,  an orthonormal basis of parallel vector fields in $T_{\gamma(t)}M$, then  $\zeta(t) = \sum_{i=0}^{2n-1} f_i(t) E_i(t)$ are Jacobi fields along $\gamma$ if they are the solutions of the following equation:
\begin{eqnarray*} 0 & = & \zeta''(t) + R^*(\gamma'(t),\zeta(t))\gamma'(t) \\ & = & \sum_{i,j=0}^{2n-1} (f''_i(t) + R^*(E_0,E_j,E_0,E_i)(t)f_i(t)) E_i(t), \end{eqnarray*} which implies that
$$ 0 = f''_i(t) + \sum_{j=1}^{2n-1} R^*(E_0,E_j,E_0,E_i)(t)f_i(t), i=0,\ldots,2n-1,$$ which is equivalent to
$$ \begin{bmatrix} f(t) \\ f'(t) \end{bmatrix}' = \begin{bmatrix} 0 & I \\ - K^*(t) & 0 \end{bmatrix} \begin{bmatrix} f(t) \\ f'(t) \end{bmatrix},$$
$$K^*_{ij}(t) := R^*(E_0,E_j,E_0,E_i)(t).$$
\noindent Along $\gamma$ we have:
$$K^*(t) = \begin{bmatrix} - Id_{r} & 0 \\ 0 & 0 \end{bmatrix}.$$
\noindent The hyperbolic subbundles are $E^{uu}$, spanned by $(e^t e_i(t),e^t e_i(t))$, $i = 1,\ldots,r$ and $E^{ss}$, spanned by $(e^{-t} e_i(t),- e^{-t} e_i(t))$, $i = 1,\ldots,r$ and $E^{ss}$, the same as for the metric $g$. And there is a central direction spanned by the Jacobi fields related to the curvature $K(\gamma'(t),E_k(t))$, $E_k(t)$ and $t E_k(t)$, for $k= r+1,\ldots,2n-1$. This implies we have a central bundle $E^c$ along the geodesic $\gamma$. Notice that $\{e_k(t)\}_{k=r+1}^{2n-1}$ and $\{E_k(t)\}_{k=r+1}^{2n-1}$ generate the same subspace of $T_{\gamma(t)}M$, invariant by parallel transport because it is orthogonal to $\gamma'(t)$ and $A(\gamma(t),\gamma'(t))$. Then $E^c = E^s_g \oplus E^u_g$.
\end{proof}

\subsubsection{Preservation of the cone field for parallel vectors}
\label{s.para1}

Now we adapt to the geodesic flow of the new metric $g^*$,  the same type of calculations done in  subsection \ref{s. cone var for anosov}. To prove the partial hyperbolicity of this new flow, we divided the set of vectors whose geodesics cross the neighborhood where we change the original metric. First we verify the proper invariance of the cone field for parallel vectors (see definition in the beginning of section \ref{s.geode}).

By the formula of the bump function $\Phi_k$ we have that, as $\epsilon$ goes to zero, the partial derivatives of second order of $\alpha$ which do not involve the direction of $\frac{\partial}{\partial_{x_k}}$ go to zero. The only one that does not go to zero as $\epsilon \to 0$ is $\partial^2_{k,k} \Phi_k$. So, the following holds: 
$$R^*_{010k} \approx R_{010k}, k = 2,\ldots,n-1,$$ 
 $$R^*_{0k0k} \approx R_{0k0k} - \frac{1}{2} \partial^2_{k,k} \alpha $$
\noindent If $v = (v_0,0,\ldots,0)$ then: 
$$R^*(v,\xi,v,\eta) \approx R(v,\xi,v,\eta) - \frac{1}{2} \partial^2_{\xi \eta} \alpha v^2_0$$ 
$$\approx R(v,\xi,v,\eta) - \frac{1}{2} \sum_{k=r+1}^{n-1}\partial^2_{kk} \alpha v^2_0 \xi_k \eta_k.$$
\noindent When we use the symbol $\approx$ we mean that the difference between the left side and the right side is of order $\epsilon$. It depends on the size of $|\alpha|,|\partial \alpha|,|\partial^2_{ij}\alpha|, i \neq j$, and the size of $supp(\Phi_i), i=r+1,\ldots,n-1$ (see lemma \ref{estimates}).

\begin{remark}
Remember that for the geodesic flow of section \ref{s. cone var for anosov} $\xi_{A'}$ and $\eta_{A'}$ do not appear on the calculations due to the fact that $A$ is a parallel subspace along geodesics (see equation \ref{eq:jacobi}). For the new metric $g^*$ this is not the case but, as $A'$ depends on the first derivative of $\alpha$ they appear as a small term of  perturbation.
\end{remark}

\begin{lemma}
\label{l.para1}
For parallel vectors the angle cone variation is positive (therefore, from subsection \ref{s. cone variation def}, the cone is closed by the action of the derivative).
\end{lemma}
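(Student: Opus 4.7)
The plan is to mimic the angle cone variation calculation of subsection \ref{s. cone var for anosov} line by line, now with $g^*$ in place of $g$, and to treat the extra terms that appear (from the fact that $A$ is no longer parallel under $g^*$ and from the modified curvature) as small perturbations controlled by lemma \ref{estimates} and remark \ref{o. curvature tensor}. Starting from
\begin{align*}
\frac{d}{dt}\Theta^u_A(\xi,\eta) &= \frac{2g^*(\xi_A+\eta_A,\,\xi'_A+\eta'_A+\xi_{A'}+\eta_{A'})}{g^*(\xi,\xi)+g^*(\eta,\eta)} \\
&\quad - \frac{2g^*(\xi_A+\eta_A,\xi_A+\eta_A)}{(g^*(\xi,\xi)+g^*(\eta,\eta))^2}\bigl(g^*(\xi,\eta)-R^*(v,\xi,v,\eta)\bigr),
\end{align*}
the Jacobi equation gives $\xi'=\eta$ and $\eta'=-R^*(v,\xi)v$, which upon projecting to $A$ yields $\xi'_A=\eta_A-\xi_{A'}$ and $\eta'_A=-(R^*(v,\xi)v)_A-\eta_{A'}$, exactly as in subsection \ref{s. cone var for anosov} except for the new $\xi_{A'},\eta_{A'}$ correction terms and the fact that $R^*$ replaces $R$.

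Next I would estimate these new contributions for $v$ a parallel vector. The projection derivative $(\frac{D}{dt}Pr_A)\xi$ only involves first derivatives of the metric coefficients, so lemma \ref{estimates} gives $|\xi_{A'}|,|\eta_{A'}|=O(\epsilon^2)$. From remark \ref{o. curvature tensor}, $R^*_{0i0j}\approx R_{0i0j}$ except for the pure diagonal entries $R^*_{0k0k}$ with $k\in\{r+1,\ldots,n-1\}$; consequently, with $v\approx v_0\gamma'$ (this is the parallel-vector hypothesis, with transversal components of size $O(\theta)$), one has $(R^*(v,\xi)v)_A = -\xi_A+O(\epsilon)+O(\theta)$, while
\[
R^*(v,\xi,v,\eta)=R(v,\xi,v,\eta)-\tfrac{1}{2}\sum_{k=r+1}^{n-1}\partial^2_{kk}\alpha\,v_0^2\,\xi_k\eta_k+O(\epsilon).
\]
The key point is that the $O(1)$ deformation of the curvature lives entirely in the $B$-$B$ diagonal block and therefore only modifies the coefficient of the scalar $g^*(\xi_B,\eta_B)$ cross term; it does not touch the $A$-direction computation at principal order.

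Substituting and rearranging exactly as in subsection \ref{s. cone var for anosov}, the principal part of $\frac{d}{dt}\Theta^u_A(\xi,\eta)$ reproduces the quadratic form
\[
\frac{2g^*(\xi_A+\eta_A,\xi_A+\eta_A)}{(g^*(\xi,\xi)+g^*(\eta,\eta))^2}\Bigl[g^*(\xi_A-\eta_A,\xi_A-\eta_A) + Q_B(\xi_B,\eta_B)\Bigr],
\]
where $Q_B$ is the quadratic form obtained by completing the square in the $B$-variables with the perturbed cross-term coefficient. Since along $\gamma$ the deformation is designed so that $\Phi_k(t,0)=-\tfrac{1}{4}$ (changing the $B$-sectional curvature from $-\tfrac{1}{4}$ to $0$), the perturbed coefficient stays within the regime where $Q_B$ is positive semidefinite; in particular, completing the square again yields a sum of squares plus a strictly positive multiple of $g^*(\eta_B,\eta_B)$ (the analogue of the $\tfrac{39}{64}$ coefficient, modified but still bounded away from zero). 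The $A$-direction contribution $g^*(\xi_A-\eta_A,\xi_A-\eta_A)$ is untouched by the deformation.

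Finally I would apply the compactness argument of lemma \ref{l.angle}: on the compact set $\partial C^u(v,c)\cap\{g^*(\xi,\xi)+g^*(\eta,\eta)=1\}$ the principal quadratic form is bounded below by a strictly positive constant depending only on $c$ and the original metric, while the remaining terms from $\xi_{A'},\eta_{A'}$, from the $O(\epsilon)$ off-diagonal perturbation of the curvature tensor, and from the transversal components of $v$ are each of size $O(\epsilon)+O(\theta)$. Choosing $\epsilon$ and $\theta$ small enough, the principal positive lower bound dominates, giving $\frac{d}{dt}\Theta^u_A>0$ on the boundary of the cone. The main obstacle I expect is the bookkeeping in paragraph three: one must verify that the $O(1)$ change in the $B$-sectional curvatures (which is the whole point of the deformation and cannot be made small) enters only in the $B$-cross-term coefficient and is small \emph{enough} there, after the precise choice $\Phi_k(t,0)=-\tfrac14$, for $Q_B$ to remain positive semidefinite.
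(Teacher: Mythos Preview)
Your approach is essentially the paper's: compute the angle cone variation for $g^*$, isolate a principal quadratic form, bound the $A'$-corrections and the off-diagonal curvature perturbations by $O(\epsilon)$ via lemma~\ref{estimates}, and invoke compactness on the cone boundary. Two points deserve comment.

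First, a minor one: in this lemma ``parallel vector'' means $v=(v_0,0,\dots,0)$ exactly (it is $\theta$-parallel for every $\theta$). The $O(\theta)$ terms you carry belong to the next lemma (\ref{l.para2}), where the extension to $\theta$-parallel vectors is done separately.

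Second, and this is exactly the obstacle you flag at the end: the positivity of $Q_B$ does \emph{not} follow from the pointwise value $\Phi_k(t,0)=-\tfrac14$. That value only fixes the curvature \emph{along} $\gamma$; what you need is a uniform bound on $\partial^2_{kk}\alpha$ over the entire support of the deformation. This is precisely the content of the earlier lemma on $F_\tau(x)=x^2\phi_\tau''(x)+4x\phi_\tau'(x)+2\phi_\tau(x)$, which guarantees $|\tfrac12\partial^2_{kk}\alpha\,v_0^2|\le \tfrac12(1+\delta)$ uniformly. With that bound the cross-term coefficient in $\xi_k^2+(\text{coeff})\,\xi_k\eta_k+\eta_k^2$ stays below $2$ in absolute value, and completing the square works. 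A generic bump function satisfying only $\Phi_k(t,0)=-\tfrac14$ could have $|\partial^2_{kk}\alpha|$ large somewhere in the support, making $Q_B$ indefinite there; the carefully designed $\phi_\tau$ is what prevents this. The paper's decomposition differs cosmetically from yours (it first splits off $\tfrac58 g^*(\xi-\eta,\xi-\eta)$ and then analyzes the remainder), but the numerical verification is the same and hinges on that uniform bound.
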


\begin{proof}

We begin by approximating the angle cone variation at parallel vectors with respect to the derivative of the geodesic flow by an expression that is better to work with. This expression is equal to the one for the geodesic flow of $g$ (recall subsection \ref{s. cone var for anosov}) except for the term related to the second derivative of $\alpha$ and $\xi_k$, $\eta_k$ related to the central direction along $\gamma$. Also recall  remark \ref{l.para1} about $\xi_{A'}$ and $\eta_{A'}$.

\begin{eqnarray*} & & \frac{d}{dt} \frac{g^*(\xi_A+\eta_A,\xi_A+\eta_A)}{g^*(\xi,\xi) + g^*(\eta,\eta)} = 2 \frac{g^*(\xi_A+\eta_A,\xi_A+\eta_A)}{(g^*(\xi,\xi) + g^*(\eta,\eta))^2} \left(\frac{5}{8}g^*(\xi - \eta,\xi - \eta) \right. \\ & & \left. + \frac{3}{8}g^*(\xi,\xi) - \frac{3}{4}g^*(\xi_A,\eta_A) + \frac{1}{2} \sum_{k=r+1}^{n-1} \partial^2_{kk} \alpha v^2_0 \xi_k \eta_k + \frac{3}{8} g^*(\eta,\eta) \right) \\ & & = 2 \frac{g^*(\xi_A+\eta_A,\xi_A+\eta_A)}{(g^*(\xi,\xi) + g^*(\eta,\eta))^2} \left( \left(\frac{g^*(\xi_A+\eta_A,\xi_{A'}+\eta_{A'})}{g^*(\xi_A+\eta_A,\xi_A+\eta_A)} \right)(g^*(\xi,\xi) \right. \\ & & + g^*(\eta,\eta)) - \left(\frac{g^*(\xi_A+\eta_A,\xi_A+R^*(v,\xi)v)}{g^*(\xi_A+\eta_A,\xi_A+\eta_A)} \right) (g^*(\xi,\xi) + g^*(\eta,\eta)) + \\ & & \left. \frac{1}{4} g^*(\xi,\eta) + \frac{3}{4}g^*(\xi_A,\eta_A) - \frac{1}{2} \sum_{k=r+1}^{n-1} \partial^2_{kk} \alpha v^2_0 \xi_k \eta_k + R^*(v,\xi,v,\eta) \right). \end{eqnarray*}
\noindent We define as $\xi_{A'}$ the covariant derivative of the projection to $A$ applied to $\xi$: $(\nabla^* Pr_A) \xi$. If $c$ is the opening of the cone (see subsection \ref{s.ph and cones}) and $g^*(\xi,\xi) + g^*(\eta,\eta) = 1$, because the derivative does not depend on the norm of the $(\xi,\eta)$, the equation above is:
$$= 2c (c^{-1}(g^*(\xi_A+\eta_A,\xi_{A'}+\eta_{A'})) - c^{-1}(g^*(\xi_A+\eta_A,\xi_A+R^*(v,\xi)v)) $$
$$+ \frac{1}{4} g^*(\xi,\eta) + \frac{3}{4}g^*(\xi_A,\eta_A) - \frac{1}{2} \sum_{k=r+1}^{n-1} \partial^2_{kk} \alpha v^2_0 \xi_k \eta_k + R^*(v,\xi,v,\eta)).$$
\noindent Then:
\begin{eqnarray*} \left| g^*(\xi_A+\eta_A,\xi_A + R^*(v,\xi)v) \right|  & \leq & \left|g^*(\xi_A+\eta_A,R^*(v,\xi)v - R(v,\xi)v) \right| \\ & + & \left| g^*(\xi_A+\eta_A,\xi_A + R(v,\xi)v) \right| . \end{eqnarray*}
\noindent Since $\left| g^*(\xi_A+\eta_A,\xi_A + R(v,\xi)v) \right|$ depends on $|\alpha|$, and $\left|g^*(\xi_A+\eta_A,R^*(v,\xi)v - R(v,\xi)v) \right| $ depends on $|\alpha|$, $|\partial \alpha|$, and $|\partial^2_{j\xi} \alpha|$, $j=1,\ldots,r$ and these terms are limited by $M \epsilon$ (recall lemma \ref{estimates}), we can say that, for some big enough $M_1$ independent of $\epsilon$:
\begin{eqnarray*} & & \left| g^*(\xi_A+\eta_A,\xi_A + R^*(v,\xi)v) \right|  \leq  \left|g^*(\xi_A+\eta_A,R^*(v,\xi)v - R(v,\xi)v) \right| + \\ & &  \left| g^*(\xi_A+\eta_A,\xi_A + R(v,\xi)v) \right|  \leq M_1 \epsilon. \end{eqnarray*}
\noindent For the same reasons:
$$\left| g^*(\xi_A+\eta_A,\xi_{A'}+\eta_{A'}) \right| \leq M_0 \left\| g^* - g \right\|_{C^1} (\left| \xi \right|^* + \left| \eta \right|^*) \leq M_1 \epsilon.$$
\begin{eqnarray*} \left| \frac{1}{4} g^*(\xi,\eta) + \frac{3}{4}g^*(\xi_A,\eta_A) - \frac{1}{2} \sum_{k=r+1}^{n-1} \partial^2_{kk} \alpha v^2_0 \xi_k \eta_k + R^*(v,\xi,v,\eta) \right| \leq M_1 \epsilon. \end{eqnarray*}
\noindent Suppose $M_1$ sufficiently big to be the same in the three inequalities above. So we have:
\begin{eqnarray*} & & \left| \frac{d}{dt} \frac{g^*(\xi_A+\eta_A,\xi_A+\eta_A)}{g^*(\xi,\xi) + g^*(\eta,\eta)} - 2\frac{g^*(\xi_A+\eta_A,\xi_A+\eta_A)}{(g^*(\xi,\xi) + g^*(\eta,\eta))^2} \left(\frac{5}{8}g^*(\xi - \eta,\xi - \eta) + \right. \right. \\ & & \left. \left. \frac{3}{8}g^*(\xi,\xi) - \frac{3}{4}g^*(\xi_A,\eta_A) + \frac{1}{2} \sum_{k=r+1}^{n-1} \partial^2_{kk} \alpha v^2_0 \xi_k \eta_k + \frac{3}{8} g^*(\eta,\eta)\right) \right| \\ & & \leq 2c(3M_1)\epsilon = M_2 \epsilon. \end{eqnarray*}
\noindent Let us analyses the following expression over the initial closed geodesic: 
$$\left(\frac{3}{8}g^*(\xi,\xi) - \frac{3}{4}g^*(\xi_A,\eta_A) + \frac{1}{2} \sum_{k=r+1}^{n-1} \partial^2_{kk} \alpha v^2_0 \xi_k \eta_k + \frac{3}{8} g^*(\eta,\eta)\right) = $$
$$\frac{3}{8}(\xi_1^2 + \xi_2^2 + \ldots + \xi_{n-1}^2 + \eta_1^2 + \eta_2^2 + \ldots + \eta_{n-1}^2 - 2 \sum_{k=1}^r \xi_k \eta_k + \frac{4}{3} \sum_{k=r+1}^{n-1} \partial^2_{kk} \alpha v^2_0 \xi_k \eta_k).$$
\noindent The expression $\xi_1^2 + \eta_1^2 + \xi_2^2 + \eta_2^2 + \ldots + \xi_{n-1}^2 + \eta_{n-1}^2 - 2 \xi_1 \eta_1 - \ldots - 2 \xi_r \eta_r + \frac{4}{3} \sum_{k=r+1}^{n-1} \partial^2_{kk} \alpha v^2_0 \xi_k \eta_k$ is equal to $\sum_{k=1}^r(\xi_k - \eta_k)^2 + \sum_{k=r+1}^{n-1} \xi_k^2 - \frac{2}{3} \xi_k \eta_k + \eta_k^2 = \sum_{i=k}^r (\xi_k - \eta_k)^2 + \sum_{k=r+1}^{n-1} (\xi_k - \frac{1}{3} \eta_k)^2 + \frac{8}{9} \eta^2_k$ which is positive in the boundary of the cone with opening $c$. This implies that along the closed geodesic $\gamma$ the cone is preserved, but that we already knew. We need to prove the positivity of the derivative along the other geodesics of the flow. So, we need the following:
$$\inf_{a \in [-1 - \frac{\delta}{2},1 + \frac{\delta}{2}]} \inf \{ \sum_{k=r+1}^{n-1} \xi_k^2 - \frac{4a}{3} \xi_k \eta_k + \eta_k^2 \} \geq L(a,b) > 0,$$ 
\noindent for any $(\xi,\eta)$ in the boundary of the cone with opening $c \in [a,b] \subset (1,2)$.
Because $g^*$ is a $C^{\infty}$ metric, and its coordinates along $\gamma$ are $\delta_{ij}$, if the neighborhood of $\gamma$ is sufficiently small, if $\epsilon$ is small enough, we can conclude:
\begin{eqnarray*} \inf_{x \in supp(\alpha)} \inf \{ (g^*(\xi,\xi) - 2g^*(\xi_A,\eta_A) + \frac{4}{3} \sum_{k=r+1}^{n-1} \partial^2_{kk} \alpha v^2_0 \xi_k \eta_k + g^*(\eta,\eta)) \} \geq \frac{1}{2} L(a,b) > 0. \end{eqnarray*}
\noindent So:
\begin{eqnarray*} & & \inf_{x \in supp(\alpha)} \inf \{ \frac{3}{8}g^*(\xi,\xi) - \frac{3}{4}g^*(\xi_A,\eta_A) + \frac{1}{2} \sum_{k=r+1}^{n-1} \partial^2_{kk} \alpha v^2_0 \xi_k \eta_k + \frac{3}{8} g^*(\eta,\eta) \} \geq \\ & & = \frac{3}{16} L(a,b) > 0. \end{eqnarray*} 
\noindent This implies that, if $\epsilon < \frac{3}{32 M_2} L(a,b)$, for $(\xi,\eta)$ in the boundary of the cone with opening $c \in [a,b] \subset (1,2)$, and for $v = (v_0,0,\ldots,0)$, then the derivative of equation (\ref{eq.cone}) is positive.

\end{proof}

\subsubsection{Extension of the cone property to $\theta$-parallel vectors}
\label{s.para2}

Now we are going to show that this derivative is positive not only for parallel vectors ($v = (v_0,0,\ldots,0)$), but for $\theta$-parallel vectors. 

\begin{lemma}
\label{l.para2}
For $\theta$-parallel vectors the angle cone variation is positive (therefore, from subsection \ref{s. cone variation def}, the cone is closed by the action of the derivative).
\end{lemma}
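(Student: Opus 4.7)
The plan is to reduce Lemma \ref{l.para2} to Lemma \ref{l.para1} by a continuity/perturbation argument in the variable $v$. Recall that in Lemma \ref{l.para1} the angle cone variation along a parallel vector $v=(v_0,0,\ldots,0)$ was written as
$$\frac{d}{dt}\Theta_A^u(\xi,\eta) = 2\frac{g^*(\xi_A+\eta_A,\xi_A+\eta_A)}{(g^*(\xi,\xi)+g^*(\eta,\eta))^2}\,Q(v,\xi,\eta) + E(\alpha,\partial\alpha,\partial^2_{ij}\alpha)_{i\neq j},$$
where $Q$ is the ``main'' quadratic form in $(\xi,\eta)$ that is bounded below by a positive constant $\tfrac{3}{16}L(a,b)$ on the boundary of the cone, and $E$ is an error term bounded by $M_2\epsilon$ thanks to Lemma \ref{estimates}. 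That bound on $E$ comes from estimates on $|\alpha|$, $|\partial\alpha|$, the off-diagonal second derivatives of $\alpha$, and the small terms $\xi_{A'},\eta_{A'}$; none of those estimates depend on whether $v$ is parallel or merely $\theta$-parallel, so $|E|\le M_2\epsilon$ continues to hold on the full set of $\theta$-parallel vectors.

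What does change with $v$ is the quadratic form $Q$ itself, through the tensor $R^*(v,\xi)v$ and the diagonal terms $\partial^2_{kk}\alpha\cdot v_0^2\cdot\xi_k\eta_k$. Write $v=(v_0,v_1,\ldots,v_{n-1})$ with $|v_i|<\theta$ for $i\ge 1$, and let $\widetilde v=(v_0,0,\ldots,0)$ be its projection onto $\gamma'$. Both $R^*(v,\xi)v$ and the products $v_0^2\xi_k\eta_k$ depend smoothly (indeed polynomially) on $v$, so
$$|Q(v,\xi,\eta)-Q(\widetilde v,\xi,\eta)|\le \Delta(\theta),$$
for some continuous function $\Delta$ with $\Delta(\theta)\to 0$ as $\theta\to 0$, uniformly for $(\xi,\eta)$ in the compact set $\partial_1 C(v,P^u_A(v),c)$ and for $x$ in the support of $\alpha$. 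Consequently,
$$Q(v,\xi,\eta)\ge \tfrac{3}{16}L(a,b)-\Delta(\theta)$$
on the boundary of the cone.

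Combining the two bounds, the total angle cone variation at a $\theta$-parallel vector is bounded below by $2c\bigl(\tfrac{3}{16}L(a,b)-\Delta(\theta)\bigr)-M_2\epsilon$. First fix $\epsilon$ as in Lemma \ref{l.para1} (and small enough so that $M_2\epsilon<\tfrac{1}{2}\cdot\tfrac{3}{16}L(a,b)$), then choose $\theta=\theta(\epsilon)$ so small that $\Delta(\theta)<\tfrac{1}{2}\cdot\tfrac{3}{16}L(a,b)$. With these choices the derivative remains strictly positive on the boundary of the cone for every $\theta$-parallel vector, which by subsection \ref{s. cone variation def} proves proper invariance on this set.

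The main obstacle is ensuring that the constant $\Delta(\theta)$ can be made small uniformly as we shrink the tubular neighborhood (since $\epsilon$ and $\theta$ must be chosen compatibly, and the support of $\alpha$ in the directions $x_k$ for $k\ge r+1$ scales like $\epsilon^2$, not $\epsilon$). However, because we only need positivity of the derivative pointwise inside the support of $\alpha$, and the estimate $|E|\le M_2\epsilon$ is uniform in $v$, the compactness of the cone boundary together with the smooth dependence of $R^*$ and $\alpha$ on $x$ and $v$ makes the argument go through: pick $\epsilon$ first to control $E$ and the shape of $Q$ away from $\gamma$, then pick $\theta$ last to control $\Delta(\theta)$.
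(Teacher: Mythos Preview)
Your approach is essentially the paper's: both reduce to Lemma \ref{l.para1} by showing that the additional contributions coming from the transverse components $v_1,\ldots,v_{n-1}$ of $v$ are $O(\theta)$, uniformly in $\epsilon$, and then choose $\theta$ small after $\epsilon$. The paper does this by explicitly expanding the curvature perturbation,
\[
R^*(v,\xi,v,\eta)-R(v,\xi,v,\eta)\approx -\tfrac12\sum_{k=r+1}^{n-1}\partial^2_{kk}\alpha\bigl(v_k^2\xi_0\eta_0+v_0^2\xi_k\eta_k-v_0v_k(\xi_0\eta_k+\xi_k\eta_0)\bigr),
\]
and reading off that every term beyond $v_0^2\xi_k\eta_k$ carries at least one factor $v_k$, hence is bounded by $M_0|\theta|$ (since $|\partial^2_{kk}\alpha|\le M_0$ uniformly in $\epsilon$ by Lemma \ref{estimates}). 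Your continuity function $\Delta(\theta)$ packages exactly the same linear-in-$\theta$ bound.

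One imprecision worth fixing: your assertion that ``$|E|\le M_2\epsilon$ continues to hold'' for $\theta$-parallel $v$ is not correct if $E$ denotes the error term from Lemma \ref{l.para1}. That error absorbed the approximation $R^*(v,\xi,v,\eta)\approx R(v,\xi,v,\eta)-\tfrac12\sum_k\partial^2_{kk}\alpha\,v_0^2\xi_k\eta_k$, which is specific to parallel $v$; for $\theta$-parallel $v$ the discrepancy picks up the cross terms displayed above, which are $O(\theta)$, not $O(\epsilon)$. You effectively relocate these into $Q$ (``through the tensor $R^*(v,\xi)v$''), which is fine, but then $Q$ is no longer the expression displayed in Lemma \ref{l.para1}, so your comparison $|Q(v,\xi,\eta)-Q(\widetilde v,\xi,\eta)|\le\Delta(\theta)$ needs a sentence saying what $Q$ now is. The paper sidesteps this by keeping the Lemma \ref{l.para1} expression fixed and simply enlarging the error bound to $M_2\epsilon+M_0|\theta|$, arriving at the same conclusion with explicit thresholds for $\theta$ and $\epsilon$.
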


\begin{proof}

\begin{eqnarray*} R^*(v,\xi,v,\eta) - R(v,\xi,v,\eta) \approx - \frac{1}{2} \sum_{k=r+1}^{n-1} \partial^2_{kk} \alpha (v_k^2 \xi_0 \eta_0 + v_0^2 \xi_k \eta_k - v_0 v_k (\xi_0 \eta_k + \xi_k \eta_0)). \end{eqnarray*}
\noindent This is so because (\ref{eq1}) implies the following relation:
\begin{equation}\label{eq2}
R^*_{ijkl} - R_{ijkl} \approx - \frac{1}{2} (\partial^2_{ik} \Delta g_{jl} + \partial^2_{jl} \Delta g_{ik} - \partial^2_{il} \Delta g_{jk} - \partial^2_{jk} \Delta g_{il}),
\end{equation}
\noindent where $\approx$ means that the equation depends on $\alpha$ and $\partial \alpha$, and $\Delta g_{ij} := g^*_{ij} - g_{ij}$. So we can say that:

\begin{eqnarray*} \left| R^*(v,\xi,v,\eta) - R(v,\xi,v,\eta) + \frac{1}{2} \sum_{k=r+1}^{n-1} \partial^2_{kk} \alpha v^2_0 \xi_k \eta_k \right| \leq M_1 \epsilon + M_0 |\theta| (\|\xi\|^* \|\eta\|^*). \end{eqnarray*}
\noindent So, for the derivative we have:
\begin{eqnarray*} & & \left| \frac{d}{dt} \frac{g^*(\xi_A+\eta_A,\xi_A+\eta_A)}{g^*(\xi,\xi) + g^*(\eta,\eta)} - 2\frac{g^*(\xi_A+\eta_A,\xi_A+\eta_A)}{(g^*(\xi,\xi) + g^*(\eta,\eta))^2} \left(\frac{5}{8}g^*(\xi - \eta,\xi - \eta) + \right. \right. \\ & & \left. \left. \frac{3}{8}g^*(\xi,\xi) - \frac{3}{4}g^*(\xi_A,\eta_A) + \frac{1}{2} \sum_{k=r+1}^{n-1} \partial^2_{kk} \alpha v^2_0 \xi_k \eta_k + \frac{3}{8} g^*(\eta,\eta)\right) \right| \\ & & 
\leq M_2 \epsilon + M_0|\theta| (\|\xi\|^* \|\eta\|^*). \end{eqnarray*}
\noindent So, if we calculate for $(\xi,\eta)$ in $g^*(\xi,\xi) + g^*(\eta,\eta) = 1$, we have that if $|\theta| < \frac{3}{64 M_0} L'(a,b)$ and $\epsilon < \frac{3}{32 M_2} L(a,b)$, then:
$$ \frac{d}{dt} \frac{g^*(\xi_A+\eta_A,\xi_A+\eta_A)}{g^*(\xi,\xi) + g^*(\eta,\eta)} \geq \frac{3}{32}L(a,b) > 0$$
\noindent Then we conclude that, in the band $\{ (x,v) \textrm{ is } \theta \textrm{-parallel to } \gamma \}$ the cones are properly invariant for the geodesic flow.

\end{proof}

\subsubsection{The control of the cones for $\theta$-transversal vectors}
\label{s.transv}

For vectors that are not $\theta$-close to $(v_0,0,\ldots,0)$ i.e. that are $\theta$-transversal to $\gamma$, we do not have preservation of the cones. To overcome this difficulty we choose an $\epsilon$ small enough such that the cone with opening $b$ stays inside the cone with opening $a$. This can be done since $\alpha$ is $C^1$ close to zero, the second derivative of $\alpha$ is uniformly bounded independently of   $\epsilon$. So:
$$\frac{d}{dt} \frac{g^*(\xi_A+\eta_A,\xi_A+\eta_A)}{g^*(\xi,\xi) + g^*(\eta,\eta)} \geq M.$$
\noindent Observe that as $\epsilon$ goes to $0$, the support of the deformation of the metric shrinks. As it shrinks, the time that the geodesics take to cross this neighborhood of the geodesic $\gamma$ goes to zero. So, as we can control the time which these geodesics spend inside the tubular neighborhood $U(\epsilon)$ of the geodesic $\gamma$, we choose an $\epsilon$ such that the cone with opening $b$ stays inside the cone of opening $a$.

Let us be more precise:

\begin{lemma}
\label{l.transv}
The time which a $\theta$-transversal geodesics cross the tubular  neighborhood  $U(\epsilon)$ of the deformation of the metric $g$ is comparable to $\epsilon$.
\end{lemma}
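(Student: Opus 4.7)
The plan is to work in the normal coordinate chart $\Psi \colon [0,T] \times (-\epsilon_0, \epsilon_0)^{n-1} \to M$ introduced in the definition of $\mathcal{B}(\gamma, \epsilon)$, and to read the statement as a quantitative claim of the form ``crossing time $\leq C(\theta)\,\epsilon$'' with $\theta$ kept fixed while $\epsilon \to 0$. In these coordinates, $U(\epsilon)$ is the product box $[0,T] \times (-\epsilon,\epsilon)^{n-1}$, and a geodesic $\sigma(t) = (x^0(t), x^1(t), \ldots, x^{n-1}(t))$ lies inside $U(\epsilon)$ precisely when every transverse coordinate $x^i(t)$, $i \geq 1$, has absolute value less than $\epsilon$. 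A $\theta$-transversal initial vector is, by definition, one whose coordinate representation $(v_0, v_1, \ldots, v_{n-1})$ satisfies $|v_i| \geq \theta$ for some $i \in \{1, \ldots, n-1\}$.

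First, I would establish a uniform bound, independent of $\epsilon$, for the Christoffel symbols of $g^*$ in the coordinates $\Psi$. This is where the $C^1$-closeness of $\alpha$ to zero that was emphasized in Section \ref{s.prop} (and made precise in Lemma \ref{estimates} parts i and ii) enters: the components $g^*_{ij}$ and their first partial derivatives differ from those of $g$ by quantities of order $\epsilon^4$ and $\epsilon^2$ respectively, so there is a constant $\Gamma_0$, depending only on $g$ and on the choice of tubular neighborhood, such that $|\Gamma^{*k}_{ij}(t,x)| \leq \Gamma_0$ throughout $U(\epsilon_0)$ for all sufficiently small $\epsilon$. Combined with the fact that the geodesic has unit speed in $g^*$, this yields a uniform bound $|\ddot{x}^k(t)| \leq C_0$ along any geodesic of $g^*$ inside $U(\epsilon_0)$.

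Second, I would apply this acceleration bound to the transverse coordinate $x^i$ where $|v_i| \geq \theta$. Integrating $|\ddot{x}^i| \leq C_0$, the derivative $\dot{x}^i(t)$ stays in the interval $[\theta/2, 3\theta/2]$ (in absolute value) for all times $t$ with $|t - t_0| \leq \theta/(2C_0)$, where $t_0$ is the entry time into $U(\epsilon)$. On this time interval, $x^i(t)$ is strictly monotone with $|\dot{x}^i| \geq \theta/2$, so the total variation of $x^i$ over any subinterval of length $\tau \leq \theta/(2C_0)$ is at least $\theta\tau/2$. Since $x^i$ is constrained to $(-\epsilon, \epsilon)$, this forces the crossing time to satisfy $\tau \leq 4\epsilon/\theta$ as long as $4\epsilon/\theta \leq \theta/(2C_0)$, i.e.\ for $\epsilon$ small relative to $\theta$.

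The main (minor) obstacle is verifying that the acceleration bound is truly $\epsilon$-uniform; once that is done, the rest is just Gronwall-type book-keeping. One has to be mindful that near the exit the geodesic could in principle glance back into $U(\epsilon)$, but monotonicity of $x^i(t)$ on the relevant time window rules this out, so a single transit contributes the entire crossing time. The conclusion $\tau \leq 4\epsilon/\theta$ is what the paper will use in Section \ref{s.phcone}: for fixed $\theta$ (and hence fixed $a,b$ defining the cone openings) the transit time is $O(\epsilon)$, small enough to absorb the bounded negative contribution to the angle cone variation collected while crossing the transversal region against the positive contribution gained in the complement.
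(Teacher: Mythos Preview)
Your argument is correct and follows essentially the same approach as the paper: write the geodesic equation in the Fermi coordinates, observe that the Christoffel symbols of $g^*$ are controlled (the paper actually uses the slightly sharper fact that they vanish on $\gamma$ and hence are $O(\epsilon)$ in $U(\epsilon)$, while you only invoke an $\epsilon$-independent bound $\Gamma_0$, but either suffices), and conclude that the transverse velocity $\dot x^i$ stays at least $\theta/2$ long enough to force $x^i$ out of $(-\epsilon,\epsilon)$ in time $O(\epsilon/\theta)$. The paper also records that the geodesic exits with $|v_i|\ge\theta/2$, a fact used in the next subsection; your monotonicity remark is the germ of this, and you might want to state it explicitly.
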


\begin{proof}
To see that the time spent inside $U(\epsilon)$  is comparable to $\epsilon$ we need to express the geodesic vector field in Fermi coordinates of the neighborhood. In fact, we can use that coordinates since  we don't need the coordinates in the whole neighborhood of the closed geodesic $\gamma$. The maps $d\pi$ and $K$ in the Fermi's coordinates are given by:
$$d\pi \xi = (\xi_0,\xi_1,\ldots,\xi_{2n-1}),$$
$$K \xi = \left(\xi_{2n+k} + \sum_{i,j=0}^{2n-1} \Gamma^{*k}_{ij} v_i \xi_j \right)_{k=0}^{2n-1}.$$
\noindent So, the pre-image of $(v,0)$ by the map $(d\pi,K)$ is:
$$\left(v_0,v_1,\ldots,v_{2n-1},- \sum_{i,j=0}^{2n-1} \Gamma^{*0}_{ij} v_i v_j,- \sum_{i,j=0}^{2n-1} \Gamma^{*1}_{ij} v_i v_j,\ldots,- \sum_{i,j=0}^{2n-1} \Gamma^{*2n-1}_{ij} v_i v_j \right).$$
\noindent Since $g^*$ is $C^{\infty}$ and along the geodesic $\gamma$, $\Gamma^{*k}_{ij} = 0$, then, if $\epsilon$ is sufficiently small, the geodesic vector field is approximately $(v_0,v_1,\ldots,v_{2n-1},0,0,\ldots,0)$.

Since the second part of the geodesic vector field is small as $\epsilon$ is small, we can say that geodesics such that $|v_i| \geq \theta$ for some $i = 1,\ldots,2n-1$ cross the neighborhood in at most time $\frac{\epsilon}{\theta}$, and they arrive to the complement of  $\{ v \in U^*M : |v_i| < \frac{\theta}{2}, i=1,2,\ldots,2n-1 \}$.
\end{proof}

\subsubsection{Proof of proposition \ref{p.phcone}}
\label{s.phcone}

Based on previous  lemmas about paralell and transversal geodesic and lemma \ref{l.transv} we can conclude the proof of  proposition \ref{p.phcone}:

\noindent{\em Proof of  proposition \ref{p.phcone}.} First, take an orbit of the geodesic flow of $g^*$. If it never crosses the region of the deformation, where $g^*$ equals the original metric $g$, then the cone field is preserved. If it crosses the region of deformation, then it takes some time $T'$ inside this region. If it is $\theta$-parallel to the geodesic $\gamma$, then it preserves the cone field (lemma \ref{l.para2}). If it turns, after this time $T'$, into a $\theta$-transversal geodesic, then it spends $T' + k \epsilon$ time inside this region (lemma \ref{l.transv}), and then it leaves it and spend some time outside it. As the set of the orbits which leave this region is a compact set, the infimum is positive. Let us say they spend at least $T_{\epsilon}$ outside the neighborhood. As $\epsilon$ goes to zero, $T_{\epsilon}$ does not goes to zero. If it did, we could get a sequence of geodesics outside $\{ v \in U^*M : |v_i| < \frac{\theta}{2}, i=1,2,\ldots,2n-1 \}$ which would spend a small   time outside the neighborhood $U(\epsilon)$ of $\gamma$ before enter it again. So, in the limit, there would be a contradiction with the uniqueness of the solutions of the ordinary differential equations of the geodesic flow. So the time spent outside the neighborhood of $\gamma$ is bounded from below - let us say it is bounded from below by $T$. This means that we can choose $\epsilon$ so that the quotient between the time spent inside and the time spent outside of the neighborhood of $\gamma$ is as small as we want. As small as it is necessary for the preservation of the strong unstable and strong stable cones. So, the orbit spends some time $k \epsilon$ where there is a little expansion of the angle of the cone field, then spends time at least $T$ in the region where there's contraction of the angle of the cone field.

Outside the neighborhood of the deformation the following holds:

\begin{eqnarray*} \frac{d}{dt} \frac{(g^*(\xi_A+\eta_A,\xi_A+\eta_A)}{g^*(\xi,\xi)+g^*(\eta,\eta)} = \frac{d}{dt} \frac{g(\xi_A+\eta_A,\xi_A+\eta_A)}{g(\xi,\xi)+g(\eta,\eta)} \geq \frac{3}{8}c(2-c), \end{eqnarray*}
\noindent for $(\xi,\eta)$ in the boundary of the cone of opening $c$. So, for cones with boundary in $[a,b]$, we have:
$$\frac{d}{dt} \frac{g^*(\xi_A+\eta_A,\xi_A+\eta_A)}{g^*(\xi,\xi)+g^*(\eta,\eta)^*} \geq \frac{3}{8}b(2-b).$$

So we choose $a'$ such that $|a'-b| < \frac{3}{16}b(2-b)T$. This ensures that outside the neighborhood the geodesic flow sends the cone with opening $a'$ inside the cone with opening $B$ in time $\frac{T}{2}$. For $\epsilon$ sufficiently small, with the inferior limit of the derivative not depending on $\epsilon$, the cone with opening $b$ is not sent outside the cone with opening $a'$. 

So, we have preservation of the cone field, although there is a region where the cone field is not properly invariant, because the orbits of length $T$ of the geodesic flow cross this region in an interval of time as small as we want. So the preservation of the cone field holds because after that it takes an interval of length $\frac{T}{2}$ for the cones to be properly contained. 
\qed

\subsubsection{Exponential growth of the Jacobi fields}
\label{s.expo}

So, the strong unstable cone is preserved by the new geodesic flow. By reversibility of geodesic flows, the strong stable cone is preserved too. But preservation of these cones only proves that there are invariant subbundles with domination. We have to show that there is exponential growth along these strong directions. 

\begin{proposition}
\label{p.expo}
For the geodesic flow of $g^*$ there is exponential expansion of vectors in $C^u(v,c)$. 
\end{proposition}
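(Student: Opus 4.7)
The plan is to adapt the direct computation of subsection \ref{s. cone var for anosov} to the perturbed setting, tracking how much the exact doubling rate obtained there for the symmetric metric $g$ is spoiled by the deformation. Throughout, I fix $(\xi,\eta)\in C^u(v,c)$ and set $(\xi(t),\eta(t)):=d_{(x,v)}\phi^*_t(\xi,\eta)$, which by proposition \ref{p.phcone} remains in $C^u(v(t),c)$ for all $t\geq 0$. The key quantity to follow is
\[
\Theta(t):=g^*\bigl((\xi_A+\eta_A)(t),(\xi_A+\eta_A)(t)\bigr),
\]
which for the unperturbed metric satisfies $\frac{d}{dt}\Theta=2\Theta$ exactly, and which inside the cone controls the full Sasaki norm from below via $g^*(\xi,\xi)+g^*(\eta,\eta)\leq c^{-1}\Theta$.

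The next step is to redo the computation of $\frac{d}{dt}\Theta$ with $g^*$ and $R^*$ in place of $g$ and $R$. Two new error sources enter. First, the subspace $A$ is no longer parallel along the geodesic, so $\xi_{A'}$ and $\eta_{A'}$ appear; however, from the very estimates used in subsection \ref{s.para1} these terms contribute at most $O(\epsilon)(\|\xi\|^*+\|\eta\|^*)^2$. Second, $R^*(v,\xi)v$ differs from $R(v,\xi)v$ via second partials of $\alpha$; by lemma \ref{estimates}, the only entries of $d^2\alpha$ that remain $O(1)$ as $\epsilon\to 0$ are $\partial^2_{kk}\alpha$ with $k\geq r+1$, and by construction these perturb only the $B$-block of the curvature tensor. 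Consequently the projection $(R^*(v,\xi)v)_A$ differs from $(R(v,\xi)v)_A=-\xi_A$ by at most $O(\epsilon)\|\xi\|^*$, and one obtains, uniformly on the support of the deformation, an inequality of the form
\[
\frac{d}{dt}\Theta(t)\;\geq\;2\Theta(t)\;-\;M\epsilon\bigl(g^*(\xi,\xi)+g^*(\eta,\eta)\bigr)\;\geq\;\bigl(2-M\epsilon/c\bigr)\Theta(t),
\]
with $M$ independent of $\epsilon$; outside the support of $\alpha$ the exact inequality $\frac{d}{dt}\Theta\geq 2\Theta$ holds.

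Integrating the differential inequality above along the orbit, and using cone invariance to compare $\Theta(t)$ with $g^*(\xi(t),\xi(t))+g^*(\eta(t),\eta(t))$ up to a multiplicative constant, will yield
\[
\bigl\|d_{(x,v)}\phi^*_t(\xi,\eta)\bigr\|^{*}\;\geq\;K^{-1}\exp\!\bigl((1-M\epsilon/(2c))\,t\bigr)\bigl\|(\xi,\eta)\bigr\|^{*}
\]
for all $t\geq 0$, which is the required exponential expansion provided $\epsilon$ is small enough (consistent with the smallness required by proposition \ref{p.phcone}). The main difficulty in the careful write-up will be to confirm that the single $O(1)$ contribution to the curvature perturbation, namely $\partial^2_{kk}\alpha$ with $k\geq r+1$, enters $\frac{d}{dt}\Theta$ only through cross terms involving $\xi_B,\eta_B$ that can still be bounded by $O(\epsilon)\|\xi\|^*\|\eta\|^*$ after using the precise structure of $\alpha$, rather than producing an uncontrolled $O(1)$ bad term. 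A cleaner but less quantitative alternative is to invoke lemma \ref{l.contreras}: proposition \ref{p.phcone} together with its stable-cone counterpart already provides a dominated splitting with three nontrivial blocks for the (conservative, in fact contact) geodesic flow of $g^*$, and Contreras' lemma then automatically upgrades domination to uniform exponential expansion on the strong unstable subbundle extracted from the cone $C^u$.
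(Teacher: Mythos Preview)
Your alternative at the end is exactly the paper's proof: proposition \ref{p.phcone} (plus its reversed-time counterpart) yields a dominated splitting $E^{cs}\oplus E^c\oplus E^{cu}$ of the contact structure, and lemma \ref{l.contreras} (Contreras) then gives uniform exponential expansion on $E^{cu}$. That one line is all the paper does here.

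Your primary, quantitative approach has a real problem in the $\theta$-transversal region, and it is precisely the difficulty you flag. For a transversal direction $v$ the subspace $A(x,v)$ is \emph{not} aligned with the coordinate frame $e_1,\dots,e_r$ used to build $\alpha$. Concretely, in the complex hyperbolic case take $v=\tfrac{1}{\sqrt{2}}(e_0+e_2)$, so $A(x,v)=\mathbb{R}\cdot Jv=\mathbb{R}\cdot\tfrac{1}{\sqrt{2}}(e_1+e_3)$. Using the expression from subsection \ref{s.para2},
\[
g\bigl(R^*(v,\xi)v-R(v,\xi)v,\eta\bigr)\approx-\tfrac12\sum_{k\geq r+1}\partial^2_{kk}\alpha\,(v_k\xi_0-v_0\xi_k)(v_k\eta_0-v_0\eta_k),
\]
one checks that $\bigl(R^*(v,\xi)v-R(v,\xi)v\bigr)_A$ contains the term $-\tfrac{1}{4\sqrt{2}}\,\partial^2_{33}\alpha\cdot\xi_3$, which is $O(1)$ in $\epsilon$ and does \emph{not} vanish when $\xi_B=0$ (since $e_3$ has a nonzero $A(x,v)$-component). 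Hence your claimed inequality $\frac{d}{dt}\Theta\geq(2-M\epsilon/c)\Theta$ fails uniformly on the support of the deformation; in the transversal region you only get $\frac{d}{dt}\Theta\geq -M\Theta$ with $M$ independent of $\epsilon$.

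This is salvageable, but not by the route you sketched: you would have to reuse the time-splitting of subsections \ref{s.transv}--\ref{s.phcone}, noting that transversal segments have length $O(\epsilon/\theta)$ while parallel and exterior segments carry the good rate $2-O(\epsilon)$, and then integrate. Since the paper already established domination this way and then invokes lemma \ref{l.contreras}, your ``cleaner alternative'' is both correct and the intended argument; the direct estimate, as written, does not close.
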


\begin{proof}
The geodesic flow has an invariant splitting of the following kind: $S(UM) = E^{cs} \oplus E^c \oplus E^{cu}$. Because it is symplectic, and $dim(E^{cs}) = dim(E^{cu})$, we apply the lemma \ref{l.contreras}. Then the invariant subbundles $E^{cs}$ and $E^{cu}$ are hyperbolic, or there is exponential contraction of vectors in the former subbundle, and exponential expansion of vectors in the later subbundle. 
\end{proof}

\subsection{Finishing the  proof of theorem A, B, and corollaries C.1 and C.2}
\label{s.conc}

Summarizing, in proposition \ref{p.phcone} we proved the proper invariance of the unstable and stable cones and in proposition \ref{p.expo} we proved the exponential expansion or contraction respectively. Therefore we conclude:

\begin{theorem}
\label{t.phgeodesic}
Let $(M,g)$ be a K\"ahler manifold of negative holomorphic curvature $-1$ or a quaternion K\"ahler locally symmetric space of negative curvature. Then there is a $C^\infty$ metric $g^*$ on $M$ such that its geodesic flow is partially hyperbolic but not Anosov. Also, $g^*$ is $C^2$-far from the open set of metrics on $M$ which have Anosov geodesic flows.
\end{theorem}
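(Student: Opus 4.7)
The plan is to assemble the pieces already established in the preceding subsections; the theorem is essentially a summary statement. First I would take as candidate the metric $g^*$ constructed in section \ref{s.prop}: starting from the locally symmetric metric $g$ of nonconstant negative curvature, fix a closed prime geodesic $\gamma$ without self-intersections, introduce Fermi coordinates along $\gamma$ (this step uses orientability of the subbundles $A$ and $B$ verified in section \ref{s.orient} for exactly the two families listed in the hypothesis), and deform only the component $g_{00}$ by adding $\alpha(t,x)=\sum_{k=r+1}^{n-1} x_k^2 \Phi_k(x)$, with the family of bump functions $\Phi_k$ tuned so that $\Phi_k(t,0)=-\tfrac{1}{4}$ and the support shrinks with $\epsilon$.

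Second, to establish partial hyperbolicity I would combine the two main propositions already proved. Proposition \ref{p.phcone} provides, for all sufficiently small $\epsilon$, the proper invariance of the strong unstable and strong stable cone fields $C^u(v,c)$ and $C^s(v,c)$ under $d\phi^*_t$ and $d\phi^*_{-t}$ respectively; this produces a $d\phi^*_t$-invariant dominated splitting $S(U^*M)=E^{ss}\oplus E^c\oplus E^{uu}$. Proposition \ref{p.expo}, together with symplecticity of the geodesic flow and lemma \ref{l.contreras}, upgrades domination to uniform exponential expansion on $E^{uu}$ and (by reversibility of the geodesic flow, recalled in remark \ref{r.reverse}) uniform exponential contraction on $E^{ss}$, which is precisely the definition of partial hyperbolicity recalled in subsection \ref{ss.ph}.

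Third, to rule out the Anosov property I would invoke lemma \ref{l.eber}: the choice $\Phi_k(t,0)=-\tfrac{1}{4}$ forces $R^*_{0k0k}(t,0)=0$ along $\gamma$ for every $k=r+1,\dots,n-1$, yielding a nonzero perpendicular parallel vector field along $\gamma$ with vanishing sectional curvature against $\gamma'$. Eberlein's criterion (corollary~3.4 of \cite{E1}) then excludes the Anosov property. Equivalently, lemma \ref{l.geod} already exhibits the nontrivial central subbundle $E^c$ along $\gamma$.

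Finally, for the $C^2$-separation from the Anosov region I would rely on lemma \ref{estimates}: the deformation $\alpha$ is uniformly $C^1$-small as $\epsilon\to 0$, yet $\partial^2_{kk}\alpha$ stays of order one by the construction of the $\Phi_k$, so $g^*$ is $C^1$-close to $g$ but $C^2$-far from it. Along $\gamma$ the derivative of the Poincaré return map now carries eigenvalues on the unit circle beyond the trivial one, producing a quasi-elliptic nondegenerate closed orbit; such orbits persist under $C^2$-small perturbations of the metric, which rules out Anosov in a full $C^2$-neighborhood of $g^*$. The only delicate ingredient in this program, already settled in sections \ref{s.para1}--\ref{s.transv}, is the angle cone variation on $\theta$-transversal geodesics, where positivity fails inside the support of $\alpha$; it is recovered because lemma \ref{l.transv} shows that the time spent inside $U(\epsilon)$ shrinks with $\epsilon$, while the time spent in the hyperbolic region outside stays uniformly bounded below, so the bounded opening picked up during the crossing is reabsorbed on the next arc.
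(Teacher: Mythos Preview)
Your proposal is correct and follows exactly the paper's approach: the theorem is a summary statement, and the paper's own ``proof'' is nothing more than the sentence preceding it, invoking proposition~\ref{p.phcone} (cone invariance) and proposition~\ref{p.expo} (exponential growth). Your assembly of the pieces, including the not-Anosov step via lemma~\ref{l.eber}, is faithful to the paper and in fact more explicit.

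One small correction to your final paragraph: the closed orbit $\gamma$ for $g^*$ is \emph{not} nondegenerate quasi-elliptic. By lemma~\ref{l.geod} the Jacobi fields in the central directions are $E_k(t)$ and $tE_k(t)$, so the linearized return map on $E^c$ is a product of Jordan blocks $\bigl(\begin{smallmatrix}1 & T\\ 0 & 1\end{smallmatrix}\bigr)$ with eigenvalue exactly $1$; this is a \emph{degenerate} periodic orbit, and the standard persistence argument for nondegenerate quasi-elliptic orbits does not apply directly. The paper is equally sketchy here: it asserts the $C^2$-separation in the theorem statement and attributes the $C^2$-open set of corollary~C.1 to ``persistence of quasi-elliptic nondegenerate periodic points'' without showing how to pass from the degenerate $g^*$ to a nearby nondegenerate one. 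The natural fix (which neither you nor the paper spells out) is to overshoot slightly, taking $\Phi_k(t,0)$ just below $-\tfrac14$ so that the curvature $K^*(\gamma',e_k)$ becomes a small positive constant; then the central eigenvalues become $e^{\pm i\omega T}\neq 1$ and the persistence argument goes through.
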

To finish the proof of theorems A and B, we have to show that  some of the deformed metric   are transitive; this is concluded in the next corollary.  
\begin{corollary}
\label{c.cor3}
There is a Riemannian manifold $(M,\widetilde{g})$ such that its geodesic flow is partially hyperbolic, non-Anosov, transitive. Moreover, $(M,\widetilde{g})$ has no conjugate points.
\end{corollary}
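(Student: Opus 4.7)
The plan is to take $\widetilde{g}$ on the $C^2$-boundary of the set of Anosov metrics on $M$, and then verify each of the four required properties (partial hyperbolicity, non-Anosov, transitivity, and no conjugate points) by combining openness/closedness arguments with Theorem~B and the classical theory of Anosov geodesic flows. Concretely, I would fix a $C^2$-continuous path of smooth Riemannian metrics $g_s$, $s \in [0,1]$, with $g_0 = g$ the starting locally symmetric Anosov metric and $g_1 = g^*$ the partially hyperbolic non-Anosov metric produced in Theorem~\ref{t.phgeodesic} --- for example the convex combination $g_s := (1-s)g + sg^*$, which is smooth and positive-definite along the path since $g^*$ differs from $g$ only by addition of the localized quadratic term $\alpha$. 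Setting
\[
s^* := \sup\{\, s \in [0,1] : g_s \text{ has Anosov geodesic flow}\,\},
\]
openness of the Anosov condition gives $0 < s^* < 1$, and I would define $\widetilde{g} := g_{s^*}$. By construction $\widetilde{g}$ lies on the $C^2$-boundary of the Anosov metrics and, as a point in the closed complement of the open Anosov set, is itself not Anosov.

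To confirm that the geodesic flow of $\widetilde{g}$ is partially hyperbolic, I would appeal to the fact that the proper invariance of the strong stable/unstable cone fields of Proposition~\ref{p.phcone} together with the exponential estimates of Proposition~\ref{p.expo} are $C^2$-open conditions on the metric. Since these conditions hold for $g_s$ with $s$ arbitrarily close to $s^*$ from above (inside the open set $\mathcal{U}$ of Corollary~C.1), they persist at $s = s^*$ by continuity of $d\phi_t$ with respect to the metric in the $C^2$ topology. Transitivity of the geodesic flow of $\widetilde{g}$ is then exactly the second assertion of Theorem~B applied to this boundary metric.

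For the absence of conjugate points, I would combine two facts. First, Anosov geodesic flows on compact Riemannian manifolds have no conjugate points: this is a classical theorem (Klingenberg, or equivalently Mañé's Lagrangian subbundle criterion cited in the introduction, applied to the stable Green bundle, which is Lagrangian and invariant). Second, the property of having no conjugate points is $C^2$-closed on the space of metrics, since the presence of a conjugate point is detected by an orthogonal Jacobi field vanishing at two distinct parameter values, a condition that is preserved under $C^2$-limits by continuity of Jacobi fields in the metric. Since $\widetilde{g}$ is the $C^2$-limit of the Anosov metrics $g_s$ with $s \nearrow s^*$, it inherits the absence of conjugate points.

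The main obstacle I anticipate is the continuity argument in the partial hyperbolicity step: one must verify that the $\epsilon$-dependent constants of Section~\ref{s.nonanosov} --- the cone openings $a, b, c$, the uniform lower bound $L(a,b)$ on the angle cone variation, the crossing-time bound $T_\epsilon$, and the constants $M_0, M_1, M_2$ from Lemma~\ref{estimates} --- can be chosen uniformly along the family $\{g_s\}$ on a $C^2$-neighborhood of $s^*$, so that the strong cone-preservation and exponential estimates really do pass to $\widetilde{g}$. Tracking this dependence is routine but delicate; the transitivity conclusion, by contrast, is delegated entirely to the corresponding assertion of Theorem~B.
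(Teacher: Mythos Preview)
Your overall architecture --- interpolate along $g_s = (1-s)g + sg^*$, pass to the supremum parameter $s^*$ of the Anosov region, and use closedness of the no-conjugate-points condition to push that property to $g_{s^*}$ --- matches the paper's proof exactly. But there is one genuine gap and one misplaced argument.

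The gap is a circularity: you delegate transitivity to Theorem~B, but the transitivity clause of Theorem~B is precisely what Corollary~\ref{c.cor3} is meant to establish (see the sentence immediately preceding the corollary in Section~\ref{s.conc}). The paper instead invokes Eberlein's theorem~\cite{E4}: if $M$ carries some Anosov metric and $(M,\widetilde{g})$ has no conjugate points, then the geodesic flow of $\widetilde{g}$ is transitive. So the logical order is the reverse of yours --- absence of conjugate points is not a parallel conclusion but the \emph{input} from which transitivity follows. You already have that input in hand; you just need to apply Eberlein rather than loop back to Theorem~B.

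The misplaced argument is in the partial hyperbolicity step. You say cone preservation is a $C^2$-open condition holding for $s$ slightly above $s^*$, hence ``persists at $s=s^*$ by continuity.'' Open conditions do not pass to boundary points, so this sentence as written proves nothing about $g_{s^*}$. The correct observation --- which you yourself relegate to your final paragraph as an ``obstacle'' --- is that every $g_s$ is of the form $g$ plus the perturbation $s\alpha$, with $|s\alpha| \le |\alpha|$ and likewise for all derivatives, so the estimates of Lemma~\ref{estimates} and hence Propositions~\ref{p.phcone} and~\ref{p.expo} hold uniformly for all $s \in [0,1]$. That is the argument, not a side difficulty; promote it. (Also avoid citing Corollary~C.1, whose proof comes after Corollary~\ref{c.cor3}.)
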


\begin{proof}
By a theorem of Eberlein \cite{E4}, if $(M,g)$ is a Riemannian manifold with an Anosov geodesic flow and $(M,\widetilde{g})$ is another one without conjugate points, then the geodesic flow of $\widetilde{g}$ is transitive. The set of metrics of $M$ without conjugate points is closed. So, if we consider the one parameter family of metrics $g_s = g + s(g^* - g)$, with $s \in [0,1]$, there is an $s_0$ such that the geodesic flow of $g_{s_0}$ has no conjugate points, is partially hyperbolic, and it is not Anosov, which implies immediately that it is transitive.
\end{proof}

Now, we prove corollary C.1. that states that there is an open set of metrics whose geodesic flows are partially hyperbolic and have conjugate points.

\noindent{\em Proof of corollary C.1.}
Ruggiero in \cite {R2} proved that the $C^2$-interior of the set of metrics with no conjugate points is the set of metrics whose geodesic flow is Anosov. So, since the example has a partially hyperbolic geodesic flow which is non-Anosov, there is a metric $C^2$-close to it that has conjugate points. Moreover, it is a corollary of Ruggiero's theorem that the set of metrics with conjugate points is open, so there is a $C^2$-open set of metrics with conjugate points and a partially hyperbolic geodesic flow. 
\qed

Now we provide the proof of corollary C.2 about Hamiltonian flows.

\noindent {\em Proof of corollary C.2.}
For the same reasons of the previous corollary there is an open set of Hamiltonians with the same property, near geodesic Hamiltonians.
\qed

\section{Symmetric spaces of nonpositive curvature}
\label{s.symm}

In this section, first we give a brief introduction to the subject of symmetric and locally symmetric space \cite{E2},\cite{E3},\cite{J}, and later we prove that the geodesic flow of a compact locally symmetric spaces of nonpositive curvature is partially hyperbolic only if it has nonconstant negative curvature.

\begin{definition}
A simply connected Riemannian manifold is called symmetric if for every $x \in M$ there is an isometry $\sigma_x: M \to M$ such that
$$\sigma_x(x) = x, d\sigma_x(x) = - id_{T_xM}.$$

\noindent The property of being symmetric is equivalent to:
\begin{itemize}
\item $\nabla R \equiv 0$,
\item if $X(t)$, $Y(t)$ and $Z(t)$ are parallel vector fields along $\gamma(t)$, then $R(X(t),Y(t))Z(t)$ is also a parallel vector field along $\gamma(t)$.
\end{itemize}
\end{definition}

\begin{remark}
A symmetric Riemannian manifold is geodesically complete and every two points can be connected by a geodesic.
\end{remark}

\begin{definition}
A complete Riemannian manifold with $\nabla R \equiv 0$ is called locally symmetric.
\end{definition}

Each simply connected symmetric space $M$ is the quotient of the Lie group $G$ of isometries of $M$ with a left invariant metric by its maximal compact subgroup $K$: $M = G / K$. Each compact locally symmetric space $N$ is the quotient of a simply connected symmetric space $M$ by a cocompact lattice $\Gamma$ of $G$ acting on $M$ discretely, without fixed points and isometrically, such that $N = M / \Gamma$ \cite{E2},\cite{E3},\cite{J}.

\begin{proposition}
\label{p.symbund}
Let $N$ be a locally symmetric space, $p \in N$, $v \in T_pN$, $c$ a geodesic such that $c(0) = p$, $c'(0) = v$, there are $v_1, \ldots, v_{n-1}$ an orthogonal basis of eigenvectors of $R_{c'(0)}$ orthogonal to $v$ with eigenvalues $\rho_1, \ldots, \rho_{n-1}$, and parallel vector fields $v_1(t), \ldots, v_{n-1}(t)$ along $c$ such that $v_i(0) = v_i$. Moreover, the Jacobi fields along $c$ are linear combinations of the following Jacobi fields 
$$c_{\rho_j}(t)v_j(t) \textrm{ and } s_{\rho_j}(t)v_j(t),$$
\noindent where $$c_{\rho} (t) := \left\{ \begin{aligned} & \cos \sqrt{\rho} t, \rho > 0, \\ & \cosh \sqrt{-\rho} t, \rho < 0, \\ & 1, \rho = 0, \end{aligned} \right. s_{\rho} (t) := \left\{ \begin{aligned} & \frac{1}{\sqrt{\rho}} \sin \sqrt{\rho} t, \rho > 0, \\ & \frac{1}{\sqrt{-\rho}} \sinh \sqrt{-\rho} t, \rho < 0, \\ & t, \rho = 0. \end{aligned} \right. $$ 
\end{proposition}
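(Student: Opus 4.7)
The core structural input is that $(N,g)$ locally symmetric means $\nabla R \equiv 0$, which (as noted in the preceding definition) is equivalent to saying that if $X(t), Y(t), Z(t)$ are parallel along a geodesic $c$, then $R(X(t),Y(t))Z(t)$ is parallel along $c$ as well. My plan is to exploit this property to decouple the Jacobi equation into a diagonal system of scalar linear ODEs with constant coefficients.

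First I would consider the tidal operator $R_{v}: v^{\perp} \to v^{\perp}$ defined by $R_{v}(w) := R(w,v)v$. By the standard symmetries of the Riemannian curvature tensor ($R(X,Y,Z,W) = R(Z,W,X,Y)$), the bilinear form $(w_1,w_2) \mapsto g(R(w_1,v)v,w_2)$ is symmetric, so $R_{v}$ is a self-adjoint endomorphism of the Euclidean space $(v^{\perp},g_p)$. Hence there exist an orthonormal basis $v_1,\dots,v_{n-1}$ of $v^{\perp}$ and real numbers $\rho_1,\dots,\rho_{n-1}$ with $R_{v}(v_j) = \rho_j v_j$. I would then define $v_j(t)$ as the parallel translate of $v_j$ along $c$; since parallel transport is an isometry, $\{c'(t),v_1(t),\dots,v_{n-1}(t)\}$ remains orthonormal for each $t$.

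The key step, and the only place where the locally symmetric hypothesis really enters, is to verify that $v_j(t)$ is still an eigenvector of $R_{c'(t)}$ with the same eigenvalue $\rho_j$. Since $c'(t)$ is parallel along $c$ (it is a geodesic) and $v_j(t)$ is parallel by construction, the vector field $W_j(t) := R(v_j(t),c'(t))c'(t) - \rho_j v_j(t)$ is a parallel vector field along $c$ by the equivalent formulation of $\nabla R \equiv 0$ recalled above. Since $W_j(0) = R_{v}(v_j) - \rho_j v_j = 0$, parallelism forces $W_j(t) \equiv 0$, i.e.\ $R(v_j(t),c'(t))c'(t) = \rho_j v_j(t)$ for all $t$. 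This is the step I expect to be the main conceptual point; everything else is routine.

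Given this, I would finish by expanding an arbitrary normal Jacobi field $\zeta(t)$ along $c$ in the parallel frame as $\zeta(t) = \sum_{j=1}^{n-1} f_j(t) v_j(t)$. Because each $v_j(t)$ is parallel, $\zeta''(t) = \sum_j f_j''(t)\, v_j(t)$, and by the previous step $R(c'(t),\zeta(t))c'(t) = \sum_j \rho_j f_j(t)\, v_j(t)$. The Jacobi equation $\zeta'' + R(c',\zeta)c' = 0$ therefore decouples into the scalar ODEs
\begin{equation*}
 f_j''(t) + \rho_j f_j(t) = 0, \qquad j=1,\dots,n-1,
\end{equation*}
whose two-dimensional solution space is spanned exactly by $c_{\rho_j}(t)$ and $s_{\rho_j}(t)$ as defined in the statement (the three cases $\rho_j>0$, $\rho_j<0$, $\rho_j=0$ correspond to the trigonometric, hyperbolic, and linear solutions respectively). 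Thus every normal Jacobi field along $c$ is a linear combination of the $2(n-1)$ fields $c_{\rho_j}(t)v_j(t)$ and $s_{\rho_j}(t)v_j(t)$, proving the proposition (the tangential Jacobi fields $(a+bt)c'(t)$ appear in the trivial direction and may be absorbed into the $\rho=0$ case by extending the frame with $v_0 = v/|v|$ if one wishes to describe all Jacobi fields rather than only the normal ones).
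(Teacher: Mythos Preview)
Your proposal is correct and follows exactly the standard argument the paper alludes to (self-adjointness of the Jacobi operator plus $\nabla R\equiv 0$, then decoupling in a parallel eigenframe, as in \cite{J}). One small caveat: with the paper's curvature sign convention $R(X,Y)Z=\nabla_Y\nabla_X Z-\nabla_X\nabla_Y Z+\nabla_{[X,Y]}Z$ and Jacobi equation $\zeta''+R(c',\zeta)c'=0$, the operator with eigenvalues $\rho_j$ should be $w\mapsto R(c',w)c'$ (as the paper defines $R_v$), not $w\mapsto R(w,c')c'$; otherwise your scalar ODE picks up a spurious sign.
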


The proof of the proposition is standard and it relies on the two following facts: $R_{v} : T_pN \to T_pN : w \to R(v,w)v$ is a self-adjoint map and the curvature tensor is parallel \cite{J}.

\begin{definition}
Let $M = G / K$ be a symmetric space, where $G$ is the Lie group of isometries of $M$ and $K$ the maximal compact subgroup of $G$. Let $\mathfrak{g}$ be the algebra of Killing fields on the symmetric space $M$ and $p \in M$. Define
$$\mathfrak{k} := \{ X \in \mathfrak{g} : X(p) = 0 \},$$
$$\mathfrak{p} := \{ X \in \mathfrak{g} : \nabla X (p) = 0 \}.$$
\noindent For these subspaces of $\mathfrak{g}$, $\mathfrak{k} \oplus \mathfrak{p} = \mathfrak{g}$ and $\mathfrak{k} \cap \mathfrak{p} = \{ 0 \}$, and $T_pM$ identifies with $\mathfrak{p}$.
\end{definition}

\begin{remark}
In fact the Lie algebra of $G$ is $\mathfrak{g}$ and the Lie algebra of $K$ is $\mathfrak{k}$.
\end{remark}

\begin{definition}
Given $p \in M$, we define the involution $\phi_p(g): G \to G: g \to \sigma_p \circ g \circ \sigma_p$ where $G$ is a Lie group. Then, we obtain $\theta_p := d \phi_p : \mathfrak{g} \to \mathfrak{g}$. Since $\theta_p^2 = id$ and $\theta_p$ preserves the Lie brackets, the properties of these subspaces of $\mathfrak{g}$ are:

\begin{itemize}
\item[i.] $\theta_{p|\mathfrak{k}} = id$,
\item[ii.] $\theta_{p|\mathfrak{p}} = - id$,
\item[iii.] $[\mathfrak{k},\mathfrak{k}] \subset \mathfrak{k}$, $[\mathfrak{p},\mathfrak{p}] \subset \mathfrak{k}$, $[\mathfrak{k},\mathfrak{p}] \subset \mathfrak{p}$,
\end{itemize}
\end{definition}

\begin{proposition}
With the identification $T_pM \cong \mathfrak{p}$ the curvature tensor of $M$ satisfies
$$R(X,Y)Z(p) = [X,[Y,Z]](p)$$
\noindent for all $X,Y,Z \in \mathfrak{p}$. In particular, $R(X,Y)X(p) = - (ad_X)^2(Y)(p)$. 
\end{proposition}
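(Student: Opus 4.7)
The plan is to evaluate $R(X,Y)Z$ at the base point $p$ directly from the paper's convention $R(X,Y)Z = \nabla_Y\nabla_X Z - \nabla_X\nabla_Y Z + \nabla_{[X,Y]}Z$, and to do so by systematically exploiting three facts already recorded in the excerpt: (i) for $X\in\mathfrak{p}$, $(\nabla_V X)(p)=0$ for every $V\in T_pM$; (ii) for $K\in\mathfrak{k}$, $K(p)=0$; (iii) $[\mathfrak{p},\mathfrak{p}]\subset\mathfrak{k}$ and $[\mathfrak{k},\mathfrak{p}]\subset\mathfrak{p}$. Each is baked into the definitions of $\mathfrak{p}$ and $\mathfrak{k}$, so the proof is essentially bookkeeping rather than analysis.

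First I would dispose of the easy term: since $[X,Y]\in\mathfrak{k}$ vanishes at $p$ by (ii), one has $(\nabla_{[X,Y]}Z)(p)=\nabla_{[X,Y](p)}Z=0$. The same reasoning applied to $\nabla_Y Z-\nabla_Z Y=[Y,Z]\in\mathfrak{k}$, together with (i), shows that $(\nabla_Y Z)(p)=0$ for all $Y,Z\in\mathfrak{p}$.

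The core step is the computation of $(\nabla_X\nabla_Y Z)(p)$. Writing $[X,W]=\nabla_X W-\nabla_W X$ with $W=\nabla_Y Z$ and invoking (i) kills $(\nabla_W X)(p)$, so $(\nabla_X\nabla_Y Z)(p)=[X,\nabla_Y Z](p)$. Since $X$ is a Killing field, its flow preserves the Levi-Civita connection, hence $\mathcal{L}_X\nabla=0$; unpacked this reads $[X,\nabla_Y Z]=\nabla_{[X,Y]}Z+\nabla_Y[X,Z]$, whose first summand vanishes at $p$ as above. Therefore $(\nabla_X\nabla_Y Z)(p)=(\nabla_Y[X,Z])(p)$. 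But $[X,Z]\in\mathfrak{k}$ vanishes at $p$, so a further application of the same Killing-plus-vanishing trick—$\nabla_Y[X,Z]-\nabla_{[X,Z]}Y=[Y,[X,Z]]$—yields $(\nabla_Y[X,Z])(p)=[Y,[X,Z]](p)$. The symmetric computation gives $(\nabla_Y\nabla_X Z)(p)=[X,[Y,Z]](p)$.

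Assembling the three contributions produces $R(X,Y)Z(p)=[X,[Y,Z]](p)-[Y,[X,Z]](p)$, which is the stated formula up to Jacobi rearrangement; specializing $Z=X$ collapses everything to $-(\operatorname{ad}_X)^2 Y(p)$ via $[[X,Y],X]=-[X,[X,Y]]$, giving the ``in particular'' clause. The only genuine obstacle I anticipate is sign and convention bookkeeping—matching the Lie bracket on $\mathfrak{g}$ with the vector-field commutator on Killing fields, invoking $\mathcal{L}_X\nabla=0$ in the right form, and keeping the do Carmo curvature sign consistent throughout—so no deeper analytic difficulty is involved.
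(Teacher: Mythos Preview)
Your computation is sound and is exactly the standard textbook argument (the paper gives no proof of its own; it simply cites Jost and Eberlein). Each of your three reductions---killing $\nabla_{[X,Y]}Z$ at $p$, replacing $\nabla_X(\nabla_Y Z)$ by $[X,\nabla_Y Z]$ via $(\nabla X)(p)=0$, and then invoking $\mathcal L_X\nabla=0$---is correct and cleanly executed. You correctly arrive at
\[
R(X,Y)Z(p)=[X,[Y,Z]](p)-[Y,[X,Z]](p).
\]

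The one genuine slip is in the last sentence of your assembly. You assert this is ``the stated formula up to Jacobi rearrangement,'' but Jacobi gives
\[
[X,[Y,Z]]-[Y,[X,Z]]=[[X,Y],Z],
\]
which is \emph{not} $[X,[Y,Z]]$ in general. A quick sanity check: take $Z=Y$; then $[X,[Y,Y]]=0$ while $[[X,Y],Y]$ need not vanish (already in $\mathfrak{sl}(2,\mathbb R)$ with $X,Y$ the two symmetric traceless generators one gets $[[X,Y],Y]=4X$). No choice of sign convention on the bracket rescues this, since the formula involves a double bracket and any global sign flips cancel. So what your argument actually proves---and what the standard references prove, once adjusted to the paper's do~Carmo curvature sign---is $R(X,Y)Z(p)=[[X,Y],Z](p)$; the displayed identity in the paper appears to be a misprint.

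Fortunately this has no downstream effect: the ``in particular'' clause $R(X,Y)X=-(\operatorname{ad}_X)^2 Y$ follows equally from either expression (set $Z=X$ and use $[[X,Y],X]=-[X,[X,Y]]$), and that is the only form in which the proposition is used later in the paper. So your proof is fine; just replace the hand-wave ``up to Jacobi rearrangement'' with the honest observation that you have obtained $[[X,Y],Z]$, note the discrepancy with the displayed formula, and proceed to the specialization $Z=X$ where the two agree.
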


\begin{remark}
We are going to consider only symmetric spaces with nonpositive sectional curvature.
\end{remark}

Fix a maximal Abelian subspace $\mathfrak{a} \subset \mathfrak{p}$. Let $\Lambda$ denote the set of roots determined by $\mathfrak{a}$, and $$\mathfrak{g} = \mathfrak{g}_0 + \sum_{\alpha \in \Lambda} \mathfrak{g}_{\alpha},$$
\noindent where $\mathfrak{g}_{\alpha} = \{ w \in \mathfrak{g} : (ad X) w = \alpha(X) w, \forall X \in \mathfrak{a} \}$, $\alpha: \mathfrak{a} \to \mathbb{R}$ is a one-form \cite{J}. Notice that the subindexes are the one forms, not their values at each vector in $\mathfrak{\alpha}$.

Define a corresponding decomposition for each $\alpha \in \Lambda$, $\mathfrak{k}_{\alpha} = (id + \theta) \mathfrak{g}_{\alpha}$ and $\mathfrak{p}_{\alpha} = (id - \theta) \mathfrak{g}_{\alpha}$. Then:

\begin{itemize}
\item[i.] $id + \theta: \mathfrak{g}_{\alpha} \to \mathfrak{k}_{\alpha}$ and $id - \theta: \mathfrak{g}_{\alpha} \to \mathfrak{p}_{\alpha}$ are isomorphisms,
\item[ii.] $\mathfrak{p}_{\alpha} = \mathfrak{p}_{- \alpha}$, $\mathfrak{k}_{\alpha} = \mathfrak{k}_{- \alpha}$, and $\mathfrak{p}_{\alpha} \oplus \mathfrak{k}_{\alpha} = \mathfrak{g}_{\alpha}  \oplus \mathfrak{g}_{- \alpha}$,
\item[iii.] $\mathfrak{p} = \mathfrak{a} + \sum_{\alpha \in \Lambda} \mathfrak{p}_{\alpha}$, $\mathfrak{k} = \mathfrak{k}_0 + \sum_{\alpha \in \Lambda} \mathfrak{k}_{\alpha}$, where $\mathfrak{k}_0 = \mathfrak{g}_0 \cap \mathfrak{k}$.
\end{itemize}

For $X \in \mathfrak{a}$ we have that, along the geodesic $c$ in $M$ with initial conditions $c(0) = p$, $c'(0) = X$, the Jacobi fields are linear combinations of the following Jacobi fields:

$$c_{-\alpha(X)^2}(t)v_j(t) \textrm{ and } s_{-\alpha(X)^2}(t)v_j(t).$$

So, we define for a vector $X \in \mathfrak{a}$, and for $\alpha$ such that $\alpha(X) \neq 0$, the subspaces $P^u_{\alpha}(X), P^s_{\alpha}(X) \subset T_{(p,X)}UM$ such that

$$P^u_{\alpha}(X) = \{ (w,|\alpha(X)|w) \in \mathfrak{p} \times \mathfrak{p} : w \in \mathfrak{p}_{\alpha} \},$$ $$P^s_{\alpha}(X) = \{ (w,-|\alpha(X)|w) \in \mathfrak{p} \times \mathfrak{p} : w \in \mathfrak{p}_{\alpha} \},$$ where $T_{(p,X)}UM$ is identified with $T_pM \times T_pM$, which is identified with $\mathfrak{p} \times \mathfrak{p}$.

If follows from the definition that they are invariant by the geodesic flow.

Along the same lines of the proof that product metrics are not partially hyperbolic:

\begin{theorem}
\label{t.symm}
If the geodesic flow of a compact locally symmetric space of nonpositive curvature is partially hyperbolic, then it is a locally symmetric space of nonconstant negative curvature.
\end{theorem}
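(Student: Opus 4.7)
The plan is to imitate the strategy from the proof of Theorem~\ref{t.prod} (product metrics) but with the de~Rham/root-space decomposition replacing the product decomposition. First I reduce to the simply connected case: since the geodesic flow of a compact locally symmetric space lifts to that of its universal cover $\widetilde{M} = G/K$, and the existence of a continuous invariant splitting is preserved under lifting, it suffices to derive a contradiction on $U\widetilde{M}$ unless $\widetilde{M}$ is of nonconstant negative curvature. By the de~Rham decomposition and the fact that $\widetilde{M}$ has nonpositive curvature, write $\widetilde{M} = \mathbb{R}^k \times M_1 \times \cdots \times M_\ell$ where each $M_i$ is an irreducible symmetric space of noncompact type.

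Next I eliminate the Euclidean factor and the higher-rank case. If $k \geq 1$, pick a geodesic lying entirely in a Euclidean slice $\{p\}\times\mathbb{R}^k\times\{q\}$; the Gauss and Codazzi equations (exactly as used in Section~\ref{s.whynot}) force $R(\gamma',\cdot)\gamma' \equiv 0$ in the $\mathbb{R}^k$ block, producing a flat invariant subbundle along that geodesic and thereby obstructing any exponential contraction or expansion transverse to the flow. Hence $k=0$. Assume $k=0$ and the total rank $r=\sum_i\operatorname{rank}(M_i) \geq 2$. Fix a maximal abelian $\mathfrak{a}\subset\mathfrak{p}$ of dimension $r$ and the restricted-root decomposition $\mathfrak{p} = \mathfrak{a} \oplus \sum_{\alpha\in\Lambda}\mathfrak{p}_\alpha$. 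For a regular $X\in\mathfrak{a}$, Proposition~\ref{p.symbund} together with the invariance of $P^u_\alpha(X), P^s_\alpha(X)$ forces the unique dominated splitting to be
$$E^u(X) = \bigoplus_{\alpha(X)>0}P^u_\alpha(X), \qquad E^s(X) = \bigoplus_{\alpha(X)>0}P^s_\alpha(X),$$
with the $\mathfrak{a}\ominus\mathbb{R}X$-directions and all $\mathfrak{p}_\alpha$ with $\alpha(X)=0$ lying in the center. Since $r\geq 2$, the set of regular vectors in $\mathfrak{a}$ is divided by Weyl walls $\{\alpha_0=0\}$ across which $\operatorname{sgn}\alpha_0(X)$ flips; the subspace $\mathfrak{p}_{\alpha_0}$ (of positive dimension) then transitions from $E^u$ on one side to $E^s$ on the other, so $\dim E^u$ jumps by $\dim\mathfrak{p}_{\alpha_0}>0$. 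This contradicts the local constancy of $\dim E^u$ on the connected manifold $UM$. Thus $r=1$.

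It remains to exclude constant negative curvature. With $k=0$ and $r=1$, $\widetilde{M}$ is one of $\mathbb{R}H^n$, $\mathbb{C}H^n$, $\mathbb{H}H^n$, $CaH^2$. In $\mathbb{R}H^n$ there is (up to sign) a single root, so $\mathfrak{p}$ decomposes as $\mathfrak{a}\oplus\mathfrak{p}_\alpha$ and all Jacobi fields orthogonal to the flow grow at the single rate $|\alpha(X)|$; the dominated splitting collapses to $E^s\oplus E^u$ with trivial $E^c$ on $TN/\langle X\rangle$, contradicting the non-triviality of $E^c$ in the definition of partial hyperbolicity. The remaining three cases all have nonconstant negative sectional curvature, completing the argument. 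The main obstacle is the dimension-jump argument in the rank argument: one must rule out that the invariant splitting could be rearranged at walls in some clever $K$-equivariant way that keeps dimensions constant (for instance, by putting $\mathfrak{p}_{\alpha_0}$ entirely into $E^c$ on both sides). The way out is to observe that on the open chamber this would leave $E^c$ growing faster than some roots in $E^u$ (respectively slower than some roots in $E^s$), violating the domination inequality between $E^c$ and the extremal bundles.
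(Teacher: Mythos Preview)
Your overall architecture is sound and in fact tidier than the paper's: you separate out the Euclidean de~Rham factor explicitly, and you close the argument by explicitly ruling out $\mathbb{R}H^n$ via triviality of $E^c$, two points the paper leaves implicit. The route is essentially the same as the paper's (reduce to the cover, use the root-space description of Jacobi fields on $\mathfrak{a}$, and exploit continuity of the splitting as $X$ varies in $\mathfrak{a}$), so the comparison below is really about one step.

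The genuine gap is in your wall-crossing argument. With the paper's conventions one has $\mathfrak{p}_\alpha=\mathfrak{p}_{-\alpha}$ and $P^u_\alpha(X)=\{(w,|\alpha(X)|w):w\in\mathfrak{p}_\alpha\}$, so $P^u_\alpha=P^u_{-\alpha}$ as subspaces. Crossing the wall $\{\alpha_0=0\}$ replaces $\alpha_0$ by $-\alpha_0$ in the set $\{\alpha:\alpha(X)>0\}$, but this does \emph{not} move any subspace from $E^u$ to $E^s$; your displayed formula $E^u(X)=\bigoplus_{\alpha(X)>0}P^u_\alpha(X)$ is literally the same bundle on both sides, and $\dim E^u$ does not jump. (Separately, the assertion that \emph{all} $P^u_\alpha$ must lie in $E^u$ at a regular $X$ is not justified: the paper's definition allows $E^c$ to grow at rate $e^{|\mu|t}$, so slower root spaces may well sit in $E^c$. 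The paper accordingly only claims that $E^u$ is the span of the $P^u_{\alpha_i}$ for a \emph{top slice} $\{\alpha_1,\dots,\alpha_k\}$ ordered by $|\alpha_i(v)|$.)

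What the paper does instead is move not across a wall but \emph{onto} one: having identified some $\alpha_1$ with $P^u_{\alpha_1}(v)\subset E^u(v)$, it chooses $v'\in\mathfrak{a}$ with $\alpha_1(v')=0$. At $v'$ the $\mathfrak{p}_{\alpha_1}$ directions have \emph{zero} exponential rate, so they cannot lie in $E^u(v')$ (uniform expansion fails) nor in $E^s(v')$; hence the finite set of roots contributing to $E^u$ must change between $v$ and $v'$, contradicting continuity of a fixed-dimension subbundle built from a discrete family of root spaces. Your ``obstacle'' paragraph is groping toward exactly this, and it is easy to repair your argument along these lines: drop the sign-flip claim, pick any root $\alpha_1$ that actually occurs in $E^u$ at some regular $X$, and follow a path in $\mathfrak{a}$ to a point where $\alpha_1$ vanishes.
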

\begin{proof}
If the locally symmetric space $N$ has a partially hyperbolic geodesic flow, then the symmetric space $M$ such that $N = M / \Gamma$ has a partially hyperbolic geodesic flow.

Fix $x \in M$ and consider $v \in S_xM$. Let $\mathfrak{a}$ be the maximal Abelian subspace of $\mathfrak{g}$ in $x$ such that $v \in \mathfrak{a}$, after identification of $T_xM$ and $\mathfrak{p}$. 

Suppose $dim(\mathfrak{a}) \geq 2$. If the geodesic flow of the symmetric space $M$ is partially hyperbolic, then there is a splitting into invariant subbundles:
$$S(UM) = E^s \oplus E^c \oplus E^u.$$
\noindent This decomposition and the curvature tensor formula imply that
$$E^u(x,v) = \{ (\xi,\eta) \in T_{(x,v)}UM : (\xi,\eta) \in P^u_{\alpha_i}(v) \},$$
$$E^s(x,v) = \{ (\xi,\eta) \in T_{(x,v)}UM : (\xi,\eta)  \in P^s_{\alpha_i}(v) \},$$
\noindent $i = 1, \ldots, k$, $|\alpha_1(v)| > |\alpha_2(v)| > \ldots > |\alpha_k(v)|$, such that if $\beta \neq \alpha_i$, $\forall i = 1,\ldots,k$, then $\beta(v) < \alpha_i(v)$, $\forall i = 1,\ldots,k$.

Now we pick $(x,v')$ such that $\alpha_1(v') = 0$. Then:
$$E^u(x,v') = \{ (\xi,\eta) \in T_{(x,v')}UM : (\xi,\eta) \in P_{\beta_j} \},$$
$$E^s(x,v') = \{ (\xi,\eta) \in T_{(x,v')}UM : (\xi,\eta) \in P_{\beta_j} \},$$
\noindent for some $\beta_j \in \Lambda$, $j=1,\ldots,k'$, $|\beta_1(v')| > |\beta_2(v')| > \ldots > |\beta_{k'}(v')|$. Notice that $\alpha_1(v') = 0$ implies $\beta_j \neq \alpha_1$, $\forall j=1,\ldots,k'$. As in the proof of the product metric, there is no way to go from one decomposition to the other continuously. So, there are no Abelian subspaces with dimension greater than one, and the symmetric space of nonpositive curvature has rank one. If dimension of the Abelian subspaces is one then the symmetric space has negative curvature, which implies by the classification of Cartan \cite{He}, \cite{Hel} that it is a K\"ahler hyperbolic space, or quaternionic hyperbolic space, or the hyperbolic space over the Cayley numbers.
\end{proof}

\section{Further results and questions}
\label{s.further}

This section is about the obstructions to have a partially hyperbolic geodesic flow and some questions related to  partially hyperbolic geodesic flows.

There is an obstruction to partial hyperbolicity if one add the hypothesis of nonpositive sectional curvature in the Riemannian manifold: the rank of the Riemannian manifold.

\begin{definition}
Let $(M,g)$ be a Riemannian manifold of nonpositive sectional curvature. Then, for $v \in T_xM$, $rank(v) := \dim \mathcal{J}^c(v)$, where $\mathcal{J}^c(v)$ is the set of parallel Jacobi fields along the geodesic $\gamma$, such that $\gamma(0) = x$, $\gamma'(0) = v$. The rank of $M$ is $rank(M) := \inf_{v \in T_xM} rank(v)$ \cite{Ba1}, \cite{E2}, \cite{E3}.
\end{definition}

\begin{theorem}
\label{t.rankone}
If $M$ is a compact Riemannian manifold with nonpositive curvature such that its geodesic flow is partially hyperbolic, then $M$ has rank one.
\end{theorem}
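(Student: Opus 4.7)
I would argue by contradiction using the Rank Rigidity Theorem for nonpositive curvature. Assume $(M,g)$ is compact, nonpositively curved, with partially hyperbolic geodesic flow and $\mathrm{rank}(M) \geq 2$. By the Rank Rigidity Theorem of Ballmann and Burns--Spatzier (see \cite{Ba1}, \cite{E2}, \cite{E3}), together with Eberlein's result that for compact $M$ the de Rham decomposition of the universal cover descends to a finite cover, we may assume---after passing to such a finite cover, which preserves partial hyperbolicity since coverings are local isometries---that either (i) $M$ splits as a nontrivial Riemannian product $M_1 \times M_2$ with metric $g_1 + g_2$, or (ii) $M$ is itself an irreducible compact locally symmetric space of nonpositive sectional curvature whose rank is at least two.

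In case (ii), Theorem \ref{t.e} forces the geodesic flow to be Anosov. But the Anosov property is incompatible with rank $\geq 2$: along any geodesic $\gamma$, a nonzero perpendicular parallel vector field $E(t)$ would satisfy $E'' \equiv 0$, and the Jacobi equation would then give $R(\gamma',E)\gamma' \equiv 0$, i.e.\ $K(\gamma',E)(t) = 0$ for every $t$, contradicting Eberlein's Corollary 3.4 recalled in Section \ref{s.geod nh}. In case (i), if at least one factor has an Anosov geodesic flow then Theorem \ref{t.d} yields the desired contradiction at once. If neither factor is Anosov, I would iterate the rank rigidity decomposition on each factor down to irreducible pieces: each irreducible nonpositively curved piece is then either a higher rank locally symmetric space (reducing to case (ii)) or an irreducible rank one space, and one then appeals to the continuity-of-splitting argument in the proof of Theorem \ref{t.d}.

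The hard part is this final step: extending Theorem \ref{t.d} past its Anosov hypothesis. The decoupling of the Jacobi system along a product geodesic carried out in Section \ref{s.whynot} uses only that the metric is a product, so the core difficulty is to show that when neither factor is Anosov the splitting of the product restricts, along geodesics contained in a single factor, to give nontrivial stable and unstable components in each factor. Once this is verified, the continuity obstruction from the proof of Theorem \ref{t.d}---the impossibility of interpolating continuously between $(\alpha,\beta) = (1,0)$ and $(\alpha,\beta) = (0,1)$ without a discontinuous jump in the dimensions of the $M_1$- and $M_2$-components of $E^s$ and $E^u$---applies essentially verbatim and finishes the proof.
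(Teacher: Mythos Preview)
Your approach is essentially the paper's: rank rigidity reduces to the product case (handled by Theorem~\ref{t.prod}) and the irreducible higher-rank locally symmetric case (handled by Theorem~\ref{t.symm}). Two remarks on places where you are working harder than necessary.

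First, the step you flag as ``the hard part'' is not hard, and you do not need to iterate anything or assume a factor is Anosov. Reread the proof of Theorem~\ref{t.prod}: the Anosov hypothesis on the factors is never used in the continuity argument. Once the Jacobi system decouples, along a geodesic with $\beta=0$ the $M_2$-block of the Jacobi equation has coefficient $-\beta^2 K^2=0$, so every Jacobi field in the $M_2$-direction grows at most linearly. Since $E^s$ and $E^u$ are by definition nontrivial and exponentially contracted/expanded, they must lie entirely in the $M_1$-component along such geodesics; symmetrically for $\alpha=0$. The dimension jump as $(\alpha,\beta)$ varies then gives the contradiction verbatim. This is exactly why the paper can simply write ``By Theorem~\ref{t.prod}, $M$ has to be irreducible'' without further comment, even though the \emph{statement} of Theorem~D carries an Anosov hypothesis.

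Second, your treatment of case~(ii) takes a detour. Theorem~\ref{t.symm} as proved in the paper already concludes that a compact locally symmetric space of nonpositive curvature with partially hyperbolic geodesic flow has rank one (it is of nonconstant negative curvature). You do not need to pass through ``Anosov plus rank $\geq 2$ contradicts Eberlein''; just cite Theorem~\ref{t.symm} directly and you are done.
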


\begin{proof}
By theorem \ref{t.prod}, $M$ has to be irreducible. By the rank rigidity theorem of Ballmann \cite{Ba2} and Burns-Spatzier \cite{BS}, if $M$ is irreducible, has nonpositive curvature, and rank bigger than one, then it is a locally symmetric space of rank bigger than one. Then, by theorem \ref{t.symm}, its geodesic flow is not partially hyperbolic.
\end{proof}

Another obstruction is the dimension of the Riemannian manifold, and also the dimension of the extremal subbundles of the partially hyperbolic splitting. We use the following result in Steenrod's classical book:

\begin{theorem}[\cite{St} Theorem 27.18]
\label{t.st}
Let $S^n$ be the n-dimensional sphere. Then, it does not admit a continuous field of tangent $k$-planes if $n$ is odd or if $n \equiv 1 \mod 4$ and $2 \leq k \leq n-2$.
\end{theorem}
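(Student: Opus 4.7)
The plan is to recast the existence of a continuous tangent $k$-plane field on $S^n$ as a section/lifting problem and then show that the sole primary obstruction is nonzero in the stated dimensions. A tangent $k$-plane field on $S^n$ is the same data as a continuous splitting $TS^n \cong \xi \oplus \eta$ with $\operatorname{rk}\xi = k$ and $\operatorname{rk}\eta = n-k$, equivalently a section of the Grassmann bundle $G_k(TS^n) \to S^n$, equivalently a reduction of the structure group of $TS^n$ from $O(n)$ to $O(k) \times O(n-k)$.

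Because $S^n$ has a CW structure with a single $0$-cell and a single $n$-cell, the extension of the obvious section over the $0$-skeleton to the whole sphere has its only obstruction in
\[
H^n\bigl(S^n;\, \pi_{n-1}(G_k(\mathbb{R}^n))\bigr) \;\cong\; \pi_{n-1}\!\left(O(n)/(O(k)\times O(n-k))\right).
\]
Thus the theorem reduces to showing that the class of $TS^n$ — induced by its classifying map $S^n \to BO(n)$ — fails to lift through $B(O(k)\times O(n-k)) \to BO(n)$, i.e.\ is nonzero in this homotopy group of the real Grassmannian.

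First I would handle the $n$ odd case using an Euler-class / parity argument, exploiting that $TS^n$ is stably trivial ($TS^n \oplus \varepsilon \cong \varepsilon^{n+1}$): a splitting $\xi \oplus \eta$ combined with an iterated application of the Whitney sum formula for the Euler class, together with the Poincaré--Hopf identity $\chi(S^{2m}) = 2$ applied after pushing the splitting down to an even-dimensional factor, yields a contradiction outside the trivial ranges $k=0,n$. Second, for $n \equiv 1 \pmod 4$ and $2 \le k \le n-2$, I would compute (or quote from Steenrod) $\pi_{n-1}(G_k(\mathbb{R}^n))$ through the long exact sequence of the fibration $O(k) \to V_{n-k}(\mathbb{R}^n) \to G_k(\mathbb{R}^n)$ together with Bott periodicity of $\pi_*O$, and then detect the obstruction class using Pontryagin classes: the relation $p(\xi)\cdot p(\eta) = p(TS^n) = 1$ in $H^*(S^n)$, combined with the specific form of the generator of $\pi_{n-1}(G_k(\mathbb{R}^n))$ in this dimensional range, forces a contradiction.

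The main obstacle will be the second case: pinpointing the obstruction class for intermediate $k$. The Euler class alone is inadequate here, and the argument must proceed through the subtler Pontryagin-class detection together with the correct identification of the Bott-periodic generator of $\pi_{n-1}$ of the Grassmannian. This is exactly the technical content of Theorem 27.18 in Steenrod's book, so in practice I would follow Steenrod's computation rather than reconstruct it from scratch, verifying along the way that the parity hypotheses ($n$ odd, or $n\equiv 1\pmod 4$ with $2\le k\le n-2$) are precisely the regime in which the relevant generator survives and the tangent bundle maps nontrivially into it.
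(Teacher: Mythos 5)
The paper itself gives no proof of this statement: it is quoted verbatim (in fact slightly misquoted) from Steenrod \cite{St}, so the only ``paper approach'' to compare with is the citation, and your sketch ultimately does the same thing (``I would follow Steenrod's computation''). Your obstruction-theoretic setup --- a $k$-plane field as a section of the Grassmann bundle, with the single obstruction in $H^n(S^n;\pi_{n-1}(G_k(\mathbb{R}^n)))$ --- is indeed the framework of Steenrod's \S\S 25--27. But the two concrete mechanisms you propose for making the obstruction nonzero both fail, so as a standalone argument the proposal has genuine gaps.

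First, the parity case. As transcribed in the paper the statement says ``$n$ odd'', which is false as literally written: every odd-dimensional sphere carries a nonvanishing vector field, hence a field of tangent $1$-planes (and $S^7$ carries $k$-plane fields for every $k$). Steenrod's Theorem 27.18 says ``$n$ even'', and that is also what the application to $U_pM\cong S^{n-1}$ in Theorem \ref{t.dimen} needs. Your Euler-class/Poincar\'e--Hopf argument is exactly the proof of the \emph{even} case: if $TS^n=\xi\oplus\eta$ with $0<\operatorname{rk}\xi<n$, both summands are orientable over the simply connected base and $e(TS^n)=e(\xi)\smile e(\eta)=0$ because $H^k(S^n)=H^{n-k}(S^n)=0$, contradicting $e(TS^n)=2[S^n]$ when $n$ is even; there is no meaningful way to ``push the splitting down to an even-dimensional factor'' when $n$ is odd, and no such argument can exist since the odd claim is false. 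Second, for $n\equiv 1 \pmod 4$ the Pontryagin-class detection is vacuous: since $4i\neq n$ for all $i\geq 1$, every group $H^{4i}(S^n)$ with $i\geq 1$ vanishes, so $p(\xi)p(\eta)=1$ holds automatically and forces nothing. The actual obstruction in this range is a $2$-torsion element of $\pi_{n-1}(G_k(\mathbb{R}^n))$, and showing that the characteristic element of $TS^n$ hits it nontrivially is precisely the homotopy-group computation with Stiefel manifolds that constitutes Steenrod's proof --- the one part your sketch does not supply except by deferring to the reference. So the proposal either reduces to the same citation the paper makes, or, read as a proof, breaks at both of its substantive steps; it would also be worth flagging the odd/even typo in the statement itself.
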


So, we can state the following:

\begin{theorem}
\label{t.dimen}
If $(M^n,g)$ is a Riemannian manifold with partially hyperbolic geodesic flow then $n$ is even, and if $n \equiv 2 \mod 4$, then $\dim E^s = 1,n-2$ or $n-1$.
\end{theorem}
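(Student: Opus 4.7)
The plan is to restrict the stable subbundle $E^s$ to a single fiber of the unit tangent bundle $UM$ and invoke Theorem~\ref{t.st} (Steenrod's obstruction). By reversibility of the geodesic flow we have $\dim E^s = \dim E^u$, so it suffices to analyze $E^s$.

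First, fix $x\in M$ and observe that the fiber $U_xM$ is diffeomorphic to $S^{n-1}$. Using the horizontal/vertical decomposition from Definition~\ref{def.basic}, the restriction of $T(UM)/\langle X\rangle$ to $U_xM$ splits naturally as a direct sum of two copies of $T(S^{n-1})$: the vertical subbundle satisfies $V|_{U_xM}=T(U_xM)$, while the horizontal subbundle, modulo the nowhere-vanishing section $X=(v,0)$, sends each fiber $H(x,v)/\langle v\rangle$ isomorphically to $v^\perp\cong T_v(S^{n-1})$. Thus
\[ \bigl(T(UM)/\langle X\rangle\bigr)\big|_{U_xM}\;\cong\;T(S^{n-1})\oplus T(S^{n-1}). \]

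Second, I would project $E^s|_{U_xM}$ onto the vertical factor and argue that this projection is a bundle monomorphism, yielding a continuous rank-$k$ subbundle of $T(S^{n-1})$, where $k=\dim E^s\geq 1$. A nonzero vector in the kernel would be a horizontal $\xi=(\xi_h,0)\in E^s$ with $\xi_h\neq 0$, whose Jacobi field satisfies $J(0)\neq 0$, $J'(0)=0$, yet $|J(t)|^2+|J'(t)|^2$ decays exponentially for $t\to+\infty$ by the partial-hyperbolicity inequalities. Analyzing the Riccati equation $U'+U^2+R=0$ for the stable solution $U=J'J^{-1}$ along the exponentially contracted block rules out $U(0)=0$ on a nontrivial subspace: the directions where such a degeneration could happen are precisely those absorbed into the central bundle $E^c$ by the definition of partial hyperbolicity.

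Third, I apply Theorem~\ref{t.st} to $S^{n-1}$ with $k=\dim E^s$. The existence of our $k$-plane field contradicts the Steenrod nonexistence conclusions, so its hypotheses must fail at $S^{n-1}$:
(i) The first clause forces $n$ to be even, for otherwise $S^{n-1}$ would admit no continuous tangent plane field at all.
(ii) If $n\equiv 2\pmod 4$, then $n-1\equiv 1\pmod 4$, and the second clause of Steenrod forbids $k$-plane fields on $S^{n-1}$ for $2\leq k\leq (n-1)-2=n-3$. Since $1\leq \dim E^s\leq n-1$, this leaves exactly $\dim E^s\in\{1,n-2,n-1\}$.

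The hardest step is the bundle-injectivity of the vertical projection in Step~2. In the Anosov case one can cite Klingenberg's no-conjugate-points theorem, but Corollary~C.1 shows that partially hyperbolic geodesic flows constructed here may have conjugate points, so a direct appeal is not available. The rigorous argument should exploit the symplectic structure on the contact distribution $S(UM)$: $E^s$ is isotropic, the horizontal subbundle is Lagrangian, and $E^c$ absorbs every central direction where the Riccati comparison could degenerate, leaving the needed transversality on the genuinely hyperbolic block.
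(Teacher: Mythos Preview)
Your overall strategy---restrict $E^s$ to a fiber $U_xM\cong S^{n-1}$, produce a continuous $k$-plane field on that sphere, and invoke Theorem~\ref{t.st}---is exactly the paper's. The difference lies entirely in how the $k$-plane field is produced, and your Step~2 is a genuine gap.

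You want the vertical projection $E^s\to V$ to be fiberwise injective, i.e.\ $E^s\cap H=\{0\}$. You yourself flag this as the hardest step and correctly note that Corollary~C.1 blocks any appeal to absence of conjugate points. Your sketched Riccati argument does not close the gap: a Jacobi field with $J(0)\neq 0$, $J'(0)=0$ can perfectly well decay exponentially once the curvature is allowed to change sign, and there is no mechanism in the partial-hyperbolicity inequalities alone that forces such a vector into $E^c$ rather than $E^s$. The claim that ``$E^c$ absorbs every central direction where the Riccati comparison could degenerate'' conflates two unrelated notions of degeneracy.

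The paper bypasses this dynamical difficulty with a purely topological transversality argument. Since $E^s$ is isotropic for the symplectic form on the contact distribution $S(UM)$, the assignment $v\mapsto E^s(x,v)$ is a section of the bundle $F_x\to U_xM$ whose fiber at $v$ is the isotropic Grassmannian $\Lambda(k,S_v(UM))$. Inside this Grassmannian, the locus $\Lambda_1(k,S_v(UM))$ of isotropic $k$-planes meeting the vertical nontrivially has codimension at least two whenever $k<n-1$ (this is the standard Maslov-cycle codimension count, cited from \cite{P}). Hence the section $E^s$ can be perturbed off $\Lambda_1$ to a nearby continuous section $\sigma$ transverse to $V$, and then $d\pi\circ\sigma$ is the desired $k$-plane field on $S^{n-1}$. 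No dynamical input beyond isotropy of $E^s$ is needed. The case $k=n-1$ is handled separately (no obstruction). Replacing your Step~2 with this codimension argument gives a complete proof.
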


\begin{proof}
First, let $E^s \oplus E^c \oplus E^u$ be the splitting of $S(UM)$, the contact structure on the unit tangent bundle $UM$. Let $p \in M$ be fixed, and $U_pM$ the fiber of the unit tangent bundle of $M$ at $p$. Let $k := dim(E^s)$, $\Lambda(k,S_v(UM))$ be the set of $k$-dimensional isotropic subspaces of the symplectic space $S_v(UM)$, for any $v \in UM$, and $\Lambda_1(k,S_v(UM))$ be the set of $k$-dimensional isotropic subspaces of the symplectic space $S_v(UM)$ which intersect the vertical subspace, i.e., $E \in \Lambda_1(k,S_v(UM))$ if $E \cap V \neq \emptyset$. If we look at the fiber bundle $\pi: F_p \to U_pM$ whose fiber at $v \in U_pM$ is $\Lambda(k,S_v(UM))$. We know that the codimension of $\Lambda_1(k,S_v(UM))$ in $\Lambda(k,S_v(UM))$ is greater than one \cite{P} if $k < n-1$. So, if we look at $E^s(p,v)$ as a section of the fiber bundle $\pi: F_p \to U_pM$, it is easy to see that there is a section $\sigma: U_pM \to F_p$ which does not intersect the vertical subbundle. Let $\pi_S: S(UM) \to UM$ be the contact structure bundle on $UM$, then, $\pi_S(\sigma)$ is a continuous field of tangent $k$-planes in $U_pM$, which is a $(n-1)$-dimensional sphere. So, we need only to apply the previous theorem.

The case $k=n-1$ is trivial, there is no obstruction to the existence of a continuous field of $(n-1)$-planes on a $(n-1)$-dimensional sphere.

\end{proof}

\begin{remark}
The idea that partial hyperbolicity of the geodesic flow implies odd dimension of the Riemannian manifold is due to Gonzalo Contreras, who communicated an idea of the proof of this fact to the second author of this article. 
\end{remark}

There are some questions that we hope to address in the future:

\begin{question}
Is there a transitive partially hyperbolic non-Anosov geodesic flow with conjugate points?
\end{question}

It would be interesting to know if transitivity and existence of conjugate points can be together in these examples we constructed. For example, if the transitive non-Anosov example showed at section \ref{s.conc} is robustly transitive, the answer to the question would be positive.

\begin{question}
Is the example constructed in theorem A ergodic?
\end{question}

\end{document}